\theoremstyle{plain}
\newtheorem{thm}{Theorem}[section]
\newtheorem{lemma}[thm]{Lemma}
\newtheorem{corollary}[thm]{Corollary}
\newtheorem{remark}[thm]{Remark}
\numberwithin{equation}{section}
\theoremstyle{remark}
\def\Xint#1{\mathchoice
  {\XXint\displaystyle\textstyle{#1}}%
  {\XXint\textstyle\scriptstyle{#1}}%
  {\XXint\scriptstyle\scriptscriptstyle{#1}}%
  {\XXint\scriptscriptstyle\scriptscriptstyle{#1}}%
  \!\int}
\def\XXint#1#2#3{{\setbox0=\hbox{$#1{#2#3}{\int}$}
  \vcenter{\hbox{$#2#3$}}\kern-.5\wd0}}
\def\dashint{\Xint-}
\title{\textbf{Optimal Boundary Estimates for Stokes Systems
in Homogenization Theory }}
\author{ Shu Gu
\thanks{Email: gu@math.fsu.edu.}\\
Department of Mathematics, Florida State University, \\
Tallahassee, FL 32306-4510, USA. \vspace{0.5cm}\\
Qiang Xu
\thanks{Email: xuqiang@math.pku.edu.cn.}\\
School of Mathematical Sciences, Peking University, \\
Beijing, 100871, PR China.
}
\begin{document}
\allowdisplaybreaks
\maketitle
\begin{abstract}
The paper concerns the sharp boundary regularity estimates in homogenization of Dirichlet problem for Stokes systems. We obtain the Lipschitz estimates for velocity term and $L^\infty$ estimate for pressure term, under some reasonable smoothness assumption on
rapidly oscillating periodic coefficients. The approach is based
on convergence rates, originally investigated by S. Armstrong and Z. Shen in \cite{SZ,SZW12}, however the argument developed here does not rely on the Rellich estimates. In this sense, we find a new way to obtain the sharp uniform boundary estimates without imposing the symmetry assumption on coefficients. Additionally, we emphasize that $L^\infty$ estimate for the pressure term does require the $O(\varepsilon^{1/2})$ convergence rate, locally at least,
compared to $O(\varepsilon^\lambda)$ for the velocity term, where $\lambda\in(0,1/2)$.
\\
\textbf{Key words.} Homogenization; Stokes system; Boundary estimates; Dirichlet problem.
\end{abstract}

\section{Instruction and main results}
In the paper \cite{SGZWS}, the first author and Z. Shen have systematically established the uniform
estimates for Stokes systems with rapidly oscillating periodic coefficients, such as
the $W^{1,p}$, H\"older estimates, and interior Lipschitz estimates. The compactness argument in \cite{SGZWS}, however, may not be directly applicable in obtaining the boundary Lipschitz estimate for the velocity yet, because of the lack of Green function estimates of Stokes systems with variable coefficients. In this paper, we would like to investigate the sharp boundary estimates of Dirichlet problem, by using the convergence rate estimates instead.

To be precise, we consider the following Stokes systems with the Dirichlet
boundary condition
\begin{equation*}
(\text{D}_\varepsilon)\left\{
\begin{aligned}
\mathcal{L}_\varepsilon(u_\varepsilon) + \nabla p_\varepsilon &= F &\quad &\text{in}~~\Omega, \\
 \text{div} (u_\varepsilon) &= h &\quad&\text{in} ~~\Omega,\\
 u_\varepsilon &= g &\quad&\text{on} ~\partial\Omega,
\end{aligned}\right.
\end{equation*}
with the compatibility condition
\begin{equation}\label{a:5}
\int_\Omega h \  dx = \int_{\partial\Omega} n\cdot g\ dS,
\end{equation}
where $n$ is the unit outward normal to $\partial\Omega$, $\varepsilon>0$ is a small parameter, and the operator $\mathcal{L}_\varepsilon$ is defined by
\begin{equation*}
 \mathcal{L}_\varepsilon = - \text{div}\big[A(x/\varepsilon)\nabla\big]
 = -\frac{\partial}{\partial x_i}
 \Big[a_{ij}^{\alpha\beta}\Big(\frac{x}{\varepsilon}\Big)\frac{\partial}{\partial x_j}\Big].
\end{equation*}
Here $d\geq 2$ and $1 \leq i,j,\alpha,\beta\leq d$, and the summation convention for repeated indices is used throughout. We now assume that the coefficient matrix $A = (a_{ij}^{\alpha\beta})$ is real and satisfies
the uniform ellipticity condition
\begin{equation}\label{a:1}
 \mu |\xi|^2 \leq a_{ij}^{\alpha\beta}(y)\xi_i^\alpha\xi_j^\beta\leq \mu^{-1} |\xi|^2
 \quad \text{for}~y\in\mathbb{R}^d~\text{and}~\xi=(\xi_i^\alpha)\in \mathbb{R}^{d\times d},
 \quad\text{where}~\mu>0;
\end{equation}
and the periodicity condition
\begin{equation}\label{a:2}
A(y+z) = A(y)
\qquad\text{for}~y\in \mathbb{R}^d ~\text{and}~ z\in \mathbb{Z}^d.
\end{equation}
We also impose the smoothness condition, i.e.,
\begin{equation}\label{a:3}
 |A(x)-A(y)| \leq \kappa|x-y|^\tau
 \qquad \text{for}~x,y\in \mathbb{R}^d, \quad \text{where}~\tau\in(0,1).
\end{equation}

The following is the main result of the paper.

\begin{thm}\label{thm:1.0}
Let $\Omega$ be a bounded $C^{1,\tau}$ domain.
Suppose $A$ satisfies $\eqref{a:1}-\eqref{a:3}$.
Given $F\in L^p(\Omega;\mathbb{R}^d)$ and $h\in W^{1,p}(\Omega)$ with
$p>d$, and $g\in C^{1,\eta}(\Omega;\mathbb{R}^d)$ satisfying the compatibility condition $\eqref{a:5}$, where $\eta\in(0,\tau]$,
let $(u_\varepsilon, p_\varepsilon)\in H^1(\Omega;\mathbb{R}^d)
\times L^2(\Omega)$ be a weak solution of the Stokes system
$(\emph{D}_\varepsilon)$.
Then we have the uniform estimate
\begin{equation}\label{pri:1.0}
\begin{aligned}
\big\|\nabla u_\varepsilon\big\|_{L^\infty(\Omega)}
&+ \big\|p_\varepsilon-\dashint_{\Omega}p_\varepsilon\big\|_{L^\infty(\Omega)}
\leq C\bigg\{\|F\|_{L^p(\Omega)} + \|h\|_{W^{1,p}(\Omega)} + \|g\|_{C^{1,\eta}(\partial\Omega)}\bigg\},
\end{aligned}
\end{equation}
where $C$ depends only on $\mu,\kappa,\tau,d,\eta,p$ and $\Omega$.
\end{thm}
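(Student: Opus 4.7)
The plan is to adapt the convergence-rate strategy of Armstrong--Shen \cite{SZ,SZW12} from the scalar/elasticity setting to the Stokes system. After localizing and flattening the boundary, it suffices to prove a boundary Lipschitz estimate on $D_1:=B_1\cap\Omega$ with $0\in\partial\Omega$; the interior Lipschitz estimate is already available from \cite{SGZWS}. The overarching scheme is a Campanato-type iteration at scales $r\in[\varepsilon,1]$ in which the oscillating problem is compared on each $D_r$ to the homogenized Stokes system $\mathcal{L}_0 v+\nabla q=F$, $\mathrm{div}(v)=h$ with boundary value $v=u_\varepsilon$ on $\partial D_r$, for which sharp boundary $C^{1,\eta}$ regularity is classical.

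The crucial ingredient is a quantitative approximation lemma: for every $r\in[\varepsilon,1]$,
\begin{equation*}
 \Big(\dashint_{D_r}|u_\varepsilon-v|^2\Big)^{1/2}\le C(\varepsilon/r)^{\lambda}\, r\,\Phi(r),
 \qquad
 \Big(\dashint_{D_r}|p_\varepsilon-q|^2\Big)^{1/2}\le C(\varepsilon/r)^{1/2}\,\Phi(r),
\end{equation*}
where $\Phi(r)$ collects the relevant norms of $F,h,g$ on $D_r$. I would obtain these by a two-scale expansion using the Stokes correctors together with a duality argument for the velocity and a direct energy estimate for the pressure, carried out purely on $D_r$ without invoking Rellich bounds or symmetry of $A$. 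The main technical issue at this stage is absorbing the boundary-layer error near $\partial D_r$, which requires smoothed cut-offs and a careful use of the $C^{1,\tau}$ geometry of $\partial\Omega$, together with Bogovskii-type correctors to preserve the divergence constraint $\mathrm{div}(u_\varepsilon-v)=0$ in the cut-off region.

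Given the approximation, the Campanato iteration for the velocity is standard: set $H(r)=r^{-1}\inf_{M,b}\bigl(\dashint_{D_r}|u_\varepsilon-Mx-b|^2\bigr)^{1/2}$ and use the $C^{1,\eta}$ boundary regularity of $(v,q)$ together with the approximation lemma to deduce $H(\theta r)\le\tfrac12 H(r)+Cr^{\rho}\cdot(\text{data})$ for a fixed $\theta\in(0,1/4)$ and some $\rho>0$, whenever $r\ge\varepsilon$. Iterating gives Lipschitz control on all mesoscopic scales; for $r<\varepsilon$, the rescaled function $w(y)=\varepsilon^{-1}u_\varepsilon(\varepsilon y)$ satisfies a Stokes system with $C^{0,\tau}$ coefficients on $\varepsilon^{-1}\Omega$, so classical Schauder boundary $C^{1,\eta}$ theory closes the gap between the microscale and the macroscale.

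The pressure $L^\infty$ estimate is the delicate point flagged in the abstract. Because $p_\varepsilon$ is coupled to $u_\varepsilon$ only through $\nabla p_\varepsilon=F+\mathrm{div}(A(\cdot/\varepsilon)\nabla u_\varepsilon)$, running a Campanato-type iteration on $P(r)=\inf_c\bigl(\dashint_{D_r}|p_\varepsilon-c|^2\bigr)^{1/2}$ requires the pressure approximation error to be dyadically summable across the scales $r\gtrsim\varepsilon$; this is what forces the sharp $O(\varepsilon^{1/2})$ rate (not merely $O(\varepsilon^{\lambda})$ with $\lambda<1/2$) in the approximation lemma. The principal obstacle in the whole argument is therefore the proof of that sharp $\varepsilon^{1/2}$ rate for the pressure without the symmetry assumption on $A$: the adjoint problem has to be handled by an independent duality argument, and the boundary contribution arising from the pressure corrector (which is only a distribution at the boundary layer) has to be controlled by direct energy methods rather than by Rellich bounds.
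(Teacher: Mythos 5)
Your plan for the velocity term is close in spirit to the paper's argument: compare $u_\varepsilon$ to a homogenized Stokes solution on $D_r$ with matching boundary data, run a Campanato iteration using a convergence rate $O((\varepsilon/r)^\lambda)$, and handle $r<\varepsilon$ by rescaling to the classical Schauder regime. However, the crucial technical point that you gloss over with "smoothed cut-offs and Bogovskii correctors" is the following: when you prescribe $v=u_\varepsilon$ on $\partial D_r$, the data on the \emph{artificial} part $\partial D_r\setminus\Delta_r$ is not uniformly (in $\varepsilon$) smooth, so the convergence-rate estimate alone does not close. The paper's actual mechanism (Lemma \ref{lemma:5.4}) trades the global $O(\varepsilon^{\sigma-1/2})$ $L^2$-rate of Theorem \ref{thm:3.3} against a \emph{non-uniform} bound $\|\nabla u_\varepsilon\|_{L^\infty(\partial D_2\setminus\Delta_2)}\lesssim\varepsilon^{\sigma-1}$ obtained from the uniform H\"older estimate of \cite{SGZWS} (which itself comes from a compactness argument) together with interior/boundary Lipschitz bounds; the product gives $\lambda=2\sigma-\frac32>0$ for $\sigma\in(3/4,1)$. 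Your outline does not account for the roughness of $u_\varepsilon$ on the artificial boundary, and it is unclear how a duality/two-scale argument alone produces a positive rate there.

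The more serious gap is in the pressure. Your proposed approximation lemma $\big(\dashint_{D_r}|p_\varepsilon-q|^2\big)^{1/2}\le C(\varepsilon/r)^{1/2}\Phi(r)$ is false: $p_\varepsilon$ converges to $p_0$ only \emph{weakly} in $L^2$, since $\nabla p_\varepsilon=F+\operatorname{div}(A(\cdot/\varepsilon)\nabla u_\varepsilon)$ oscillates at scale $\varepsilon$ and does not converge strongly. To obtain a quantitative $L^2$ estimate one must subtract the pressure corrector $\pi(\cdot/\varepsilon)\psi_{2\varepsilon}\nabla u_0$, which is exactly what Theorem \ref{thm:3.3}, the corollary (\ref{pri:3.14}), and Lemma \ref{lemma:6.2} do: the object that is small in $L^2/\mathbb{R}$ is $p_\varepsilon-p_0-\pi(\cdot/\varepsilon)\psi_{2\varepsilon}\nabla u_0$, and the rate for \emph{this} quantity is only $O(\varepsilon^\lambda)$ with $\lambda<1/2$, the same as for the velocity, not $O(\varepsilon^{1/2})$. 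Moreover your claim that $O(\varepsilon^{1/2})$ is forced by dyadic summability of the approximation error is incorrect: $\sum_{j\le k}(\varepsilon 2^j)^\lambda$ is uniformly bounded for any $\lambda>0$ as soon as $\varepsilon<2^{-k}$ (it is a geometric sum dominated by its last term). The actual place where the $O(\varepsilon^{1/2})$ rate is unavoidable in the paper is the contribution of the corrector term itself, $I_3=\big|\dashint_{D_{2^{-j}}}\pi(\cdot/\varepsilon)\psi_{4\varepsilon}\nabla u_0-\cdots\big|$ in the telescoping sum: after writing $\pi-\widehat\pi=\varepsilon\operatorname{div}_x V(x/\varepsilon)$ and integrating by parts, one needs the layer/co-layer estimates of Lemma \ref{lemma:3.5} for $\nabla u_0$ and $\nabla^2 u_0$, and these are sharp of order $(\varepsilon/r)^{1/2}$. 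Finally, the paper does not run a Campanato iteration on $\inf_c\big(\dashint_{D_r}|p_\varepsilon-c|^2\big)^{1/2}$; it telescopes $\big|\dashint_{D_{2^{-j}}}p_\varepsilon-\dashint_{D_{2^{-j+1}}}p_\varepsilon\big|$ directly and splits each increment into the corrector-subtracted error, the oscillation of $p_0$ (controlled by $C^{0,\rho}$ Schauder theory), and the corrector average. Without the corrector subtraction and the layer estimates, your outline for the pressure term would not close.
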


We mention that the estimate $\eqref{pri:1.0}$ is sharp even with the $C^\infty$ data and domains. Let us first recap the important development in quantitative homogenization theory, especially in the periodic settings. In the late 1980s, uniform regularity estimates for elliptic systems with Dirichlet boundary conditions was first proved by M. Avellandeda and F. Lin \cite{MAFHL}, where the compactness method was introduced. However, it was not until 2013 that the regularity estimates for the elliptic Neumann boundary problems was solved by C. Kenig, F. Lin and Z. Shen \cite{SZW4}.
Another recent breakthrough was made by S. Armstrong and Z. Shen in \cite{SZ} for the almost-periodic setting, and they developed a new method which based on convergence rates rather than the compactness methods.
We refer the reader to \cite{SACS,SZW21,ZG1} and its reference therein for more details on non-periodic cases. Meanwhile, T. Suslina \cite{TS2,TS} obtained the sharp $O(\varepsilon)$ convergence rates in $L^2(\Omega)$ for elliptic homogenization problems in $C^{1,1}$ domains, while C. Kenig, F. Lin, and Z. Shen \cite{SZW2} figured out the almost sharp one $O(\varepsilon\ln(1/\varepsilon))$ concerned with
Lipschitz domains, and their results have been improved by the second author in \cite{QX2}, recently.
If the reader interests in the boundary estimates, we highly recommend Z. Shen's elegant work \cite{SZW12}. The quantitative homogenization has been extensively
studied, we refer the reader to \cite{SACS,SZ,MAFHL,ABJLGP,GZ,GZS,GZS1,G,SGZWS,SZW20,SZW21,QXS1,ZVVPSE} and their references therein.

For the case of Stokes systems $(\text{D}_\varepsilon)$, the uniform interior estimates and boundary H\"older estimates for the Dirichlet problem have already been established by the first author and Z. Shen \cite{SGZWS}. Now we only focus ourselves on the corresponding boundary estimates. For simplicity, we use the following notation throughout. Let
\begin{equation*}
\begin{aligned}
D_r &= \Big\{(x^\prime,x_d)\in\mathbb{R}^d:|x^\prime|<r ~\text{and}~\psi(x^\prime)<x_d<\psi(x^\prime)+r\Big\},\\
\Delta_r & = \Big\{(x^\prime,x_d)\in\mathbb{R}^d:|x^\prime|<r ~\text{and}~ x_d =\psi(x^\prime)\Big\},
\end{aligned}
\end{equation*}
where $\psi:\mathbb{R}^{d-1}\to \mathbb{R}$ is a $C^{1,\tau}$ function for some $\tau\in(0,1)$ with
$\psi(0) = 0$ and $\|\nabla\psi\|_{C^{0,\tau}(\mathbb{R}^{d-1})}\leq M$.

\begin{thm}[Local boundary estimates]\label{thm:1.1}
Assume the same conditions as in Theorem $\ref{thm:1.0}$. Let $(u_\varepsilon, p_\varepsilon)\in H^1(D_5;\mathbb{R}^d)
\times L^2(D_5)$ be a weak solution of
$\mathcal{L}_\varepsilon(u_\varepsilon) + \nabla p_\varepsilon = F$
and $\emph{div}(u_\varepsilon) = h$ in
$D_5$ and $u_\varepsilon = g$ on $\Delta_5$,
where $F\in L^p(D_5;\mathbb{R}^d)$ and $h\in W^{1,p}(D_5)$ with
$p>d$, and $g\in C^{1,\eta}(\Delta_5;\mathbb{R}^d)$ with $g(0) = 0$, where $\eta\in(0,\tau]$.
Then there holds
\begin{equation}\label{pri:1.1}
\begin{aligned}
\Big(\dashint_{D_r} |\nabla u_\varepsilon|^2dx\Big)^{\frac{1}{2}}
+ \Big|\dashint_{D_r}p_\varepsilon - \dashint_{D_1}p_\varepsilon dx\Big|
&\leq C\bigg\{\Big(\dashint_{D_2} |u_\varepsilon|^2dx\Big)^{\frac{1}{2}} + \Big(\dashint_{D_2} |F|^pdx\Big)^{\frac{1}{p}}\\
& \qquad+ \Big(\dashint_{D_2} |\nabla h|^pdx\Big)^{\frac{1}{p}}
+ \|h\|_{L^\infty(D_2)}
+ \|g\|_{C^{1,\eta}(\Delta_2)}\bigg\}
\end{aligned}
\end{equation}
for any $0<r<1/4$,
where $C$ depends only on $\mu,\kappa,\tau,d,M,\eta$ and $p$.
\end{thm}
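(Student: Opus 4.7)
The plan is to adapt the Armstrong--Shen approach of \cite{SZ,SZW12}, which bypasses the compactness method and deduces the boundary Lipschitz estimate directly from a quantitative convergence rate. We introduce boundary excess functionals that measure how well $u_\varepsilon$ can be approximated by an affine map and $p_\varepsilon$ by a constant on $D_r$; the argument then separates into two regimes. For $r\geq\varepsilon$ we prove a one-step geometric improvement of the excess when passing from $D_r$ to $D_{\theta r}$; for $r<\varepsilon$ we rescale and apply the classical Schauder boundary estimates for Stokes systems with H\"older coefficients in $C^{1,\tau}$ domains.

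The key ingredient at the large scales is a local $L^2$ convergence rate. Given $r\in[\varepsilon,1]$, let $(u_0,p_0)$ solve the homogenized Dirichlet problem
\begin{equation*}
-\mathrm{div}(\widehat{A}\nabla u_0) + \nabla p_0 = F,\qquad \mathrm{div}(u_0) = h \ \text{in}\ D_r,\qquad u_0 = u_\varepsilon \ \text{on}\ \partial D_r.
\end{equation*}
Using the smoothness hypothesis \eqref{a:3} together with the corrector and dual-corrector identities for Stokes systems developed in \cite{SGZWS}, the plan is to prove
\begin{equation*}
\Big(\dashint_{D_r}|u_\varepsilon - u_0|^2\Big)^{1/2}\!\leq C(\varepsilon/r)^\lambda\,\mathcal{N}(r),\qquad
\Big(\dashint_{D_r}|p_\varepsilon - p_0 - c_r|^2\Big)^{1/2}\!\leq C(\varepsilon/r)^{1/2}\,\mathcal{N}(r),
\end{equation*}
for some $\lambda\in(0,1/2)$, a suitable normalizing constant $c_r\in\mathbb{R}$, and $\mathcal{N}(r)$ collecting the norms on the right-hand side of \eqref{pri:1.1}. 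The gap between the velocity rate $(\varepsilon/r)^\lambda$ and the pressure rate $(\varepsilon/r)^{1/2}$ reflects the anisotropic role of $p_\varepsilon$ in the Stokes system.

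Next, define the excesses
\begin{equation*}
H(r) = \frac{1}{r}\inf_{q\in\mathbb{R}^d,\,M\in\mathbb{R}^{d\times d}}\Big(\dashint_{D_r}|u_\varepsilon - Mx - q|^2\Big)^{1/2},\qquad
K(r) = \inf_{c\in\mathbb{R}}\Big(\dashint_{D_r}|p_\varepsilon - c|^2\Big)^{1/2}.
\end{equation*}
Since $\widehat{A}$ is constant and $\Delta_r$ is $C^{1,\tau}$, classical $C^{1,\eta}$ boundary estimates for the homogenized Stokes system furnish an affine approximation of $u_0$ and a constant approximation of $p_0$ at scale $\theta r$ that contract the excesses. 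Combining this with the convergence rate above should yield, for a suitable $\theta\in(0,1/4)$ and every $r\in[\varepsilon,1]$,
\begin{equation*}
H(\theta r) + K(\theta r) \leq \tfrac{1}{2}\big(H(r)+K(r)\big) + C(\varepsilon/r)^\lambda\,\mathcal{N}(r).
\end{equation*}
Iterating on the geometric sequence $r_k = \theta^k$ (down to the smallest $r_k\geq\varepsilon$) and summing the resulting series bounds $H(r)$ and $K(r)$ uniformly in $r\in[\varepsilon,1]$ by $\mathcal{N}(1)$. A Caccioppoli-type inequality then converts $H(r)$ into an $L^2$-average bound for $\nabla u_\varepsilon$, and telescoping the differences $\dashint_{D_{r_{k+1}}}p_\varepsilon - \dashint_{D_{r_k}}p_\varepsilon$ via the $K(r_k)$ bound recovers the pressure part of \eqref{pri:1.1}.

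The main obstacle will be the pressure. Because $p_\varepsilon$ is only defined up to a constant, the mean values $c_r$ must be threaded through the iteration so that the telescoping error remains summable; moreover, since the pressure convergence rate is only $O(\varepsilon^{1/2})$ rather than $O(\varepsilon^\lambda)$, the excess decay for $H$ and $K$ cannot be uncoupled but must be established simultaneously, reflecting the genuine velocity--pressure coupling in the Stokes system. Finally, the small-scale regime $r<\varepsilon$ is handled by the rescaling $v(y)=\varepsilon^{-1}u_\varepsilon(\varepsilon y)$, $q(y)=p_\varepsilon(\varepsilon y)$, after which the system loses its oscillatory dependence and becomes a classical Stokes system with $C^{0,\tau}$ coefficients in a rescaled $C^{1,\tau}$ domain, where Schauder boundary estimates close the argument.
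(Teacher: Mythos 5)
Your velocity argument tracks the paper's: you set up a Campanato-type excess $H(r)$ over affine maps, prove a one-step contraction using the Schauder theory for the constant-coefficient Stokes system, feed in the $O((\varepsilon/r)^\lambda)$ convergence rate for $u_\varepsilon-u_0$, and iterate via the Armstrong--Shen lemma. Where you obtain the $(\varepsilon/r)^\lambda$ rate, the paper does the same thing (Lemma~\ref{lemma:5.4}) by combining the convergence-rate estimate with nonuniform boundedness of $\nabla u_\varepsilon$ on $\partial D_2\setminus\Delta_2$ (of order $\varepsilon^{\sigma-1}$), so the velocity half of your proposal is essentially the paper's argument.

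The pressure half, however, contains a genuine gap. You posit
\begin{equation*}
\Big(\dashint_{D_r}|p_\varepsilon - p_0 - c_r|^2\Big)^{1/2}\leq C(\varepsilon/r)^{1/2}\,\mathcal{N}(r)
\end{equation*}
with $c_r\in\mathbb{R}$ a constant, and then define an excess $K(r)=\inf_c\big(\dashint_{D_r}|p_\varepsilon-c|^2\big)^{1/2}$ that you iterate jointly with $H$. Neither of these can hold. The pressure $p_\varepsilon$ converges to $p_0$ only \emph{weakly} in $L^2$; the two-scale expansion carries a first-order oscillatory piece $\pi(\cdot/\varepsilon)\nabla u_0$ of size $O(1)$ in $L^2$, which does not disappear as $\varepsilon/r\to 0$ and cannot be removed by subtracting a scalar constant. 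Consequently the quantity $K(r)$ is stuck at $O(1)$ (essentially $\|\pi\|_{L^2(Y)}\|\nabla u_0\|_{L^2}$ plus error) for all $r\gtrsim\varepsilon$, and no half-contraction $K(\theta r)\leq\frac12 K(r)+\cdots$ is possible; and even if it were, the telescoping $\sum_k K(r_k)$ would pick up a factor of $k\sim\log(1/r)$ because the $K(r_k)$ do not decay. The paper circumvents this by working with the corrector-subtracted quantity $Z_\varepsilon = p_\varepsilon - p_0 - \pi(\cdot/\varepsilon)\psi_{4\varepsilon}\nabla u_0$, for which a genuine $O((\varepsilon/r)^\lambda)$ rate is available (Lemma~\ref{lemma:6.2}), and then telescopes $\dashint_{D_{2^{-j}}}p_\varepsilon-\dashint_{D_{2^{-j+1}}}p_\varepsilon$ by splitting $p_\varepsilon$ into $Z_\varepsilon + p_0 + \pi(\cdot/\varepsilon)\psi_{4\varepsilon}\nabla u_0$. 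The corrector term contributes $I_3\lesssim(\varepsilon 2^j)^{1/2}+(2^{-j})^\rho$, and the key point is that the geometric sum $\sum_{j\leq k}(\varepsilon 2^j)^{1/2}\lesssim(\varepsilon 2^k)^{1/2}\leq C$ converges precisely because $\varepsilon<2^{-k}$, thanks to the layer/co-layer estimates of Lemma~\ref{lemma:3.5} supplying an $O((\varepsilon/r)^{1/2})$ rate. Your formulation never introduces this corrector, so the pressure bound, as proposed, does not close.
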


The scaling-invariant estimate $\eqref{pri:1.1}$ ought to be regarded as a
Lipschitz estimate for the velocity $u_\varepsilon$ and $L^\infty$ estimate for the pressure $p_\varepsilon$, since it is not hard to bound the quantity
\begin{equation*}
|\nabla u_\varepsilon(0)| + \big|p_\varepsilon(0)-\dashint_{D_{1}} p_\varepsilon\big|
\end{equation*}
by the right-hand side of $\eqref{pri:1.1}$ due to
the blow-up argument, where
$0\in\partial\Omega$.
Before explaining our tactics, let us review the ideas developed in \cite{SZ,SZW12}.
For elliptic operator $\mathcal{L}_\varepsilon$ with Dirichlet or Neumann boundary conditions,
they found that the quantity
\begin{equation*}
\Big(\dashint_{D_r} |\nabla u_\varepsilon|^2 dx\Big)^{1/2}
\end{equation*}
could be bounded for any scale $r$ uniformly down to $\varepsilon$,
provided the coefficients own some repeated self-similar structure, for example,
which may be periodic or almost-periodic, even random in stationary and ergodic setting.
This result indeed came from the so-called Campanato iteration.
However, it requires an effective control of the error in homogenization as a precondition. Take the periodic homogenization as an example,
Z. Shen \cite{SZW12} construct a function $v$ such that $\mathcal{L}_0(v) = F$ in $D_r$ with the same (Dirichlet or Neumann) data on $\Delta_r$ as $u_\varepsilon$, where $\mathcal{L}_0$ is the homogenized (effective) operator of $\mathcal{L}_\varepsilon$, and
\begin{equation}\label{f:0.1}
\Big(\dashint_{D_r} |u_\varepsilon -v|^2 dx\Big)^{1/2}
\leq C\Big(\frac{\varepsilon}{r}\Big)^{1/2}
\bigg\{\Big(\dashint_{D_{2r}}|u_\varepsilon|^2dx\Big)^{\frac{1}{2}}+\text{terms involving given data}\bigg\}.
\end{equation}
It is a scaling-invariant estimate, so it suffices to consider the case of $r=1$, and this is
exactly where the convergence rates
$\|\mathcal{L}_\varepsilon^{-1} - \mathcal{L}_0^{-1}\|_{L^2(\Omega)\to L^2(\Omega)} = O(\varepsilon^{1/2})$ play a role. We mention that this estimate is not as simple as it appears,
even for elliptic systems in a bounded $C^{1,\tau}$ domain. The hard part is to
control the second order derivative lacking the smoothness assumptions on coefficients or domains.
However, if the operator $\mathcal{L}_\varepsilon$ satisfies a symmetry condition, i.e., $A=A^*$,
then the Rellich estimate became a powerful tool, which makes it possible to use the nontangential maximal function to control the boundary behavior of the solution. We mention that the estimate
$\eqref{f:0.1}$ is established in Lipschitz domains without any smoothness assumption on coefficients, but it relies on the symmetry condition.
We refer the reader to \cite{SZ,SZW12} for the original thinking.

In the paper, although the main idea is similar as that in \cite{SZ,SZW12},
the innovation clearly reflects in two aspects:
the estimate $\eqref{pri:1.1}$ does not depend on any symmetry condition, which can be extended
to the Neumann boundary problems without real difficulties;
using convergence rates on pressure term recovers its uniform boundary $L^\infty$ estimate, which shows an approach unlike the iteration arguments applied to the velocity term.

In the following paragraphs, we will outline our strategy related to the estimate $\eqref{pri:1.1}$. As the second author has found in $\cite{QX2}$, originally motivated by Z. Shen in \cite{SZW12},
the order of the convergence rates is determined by the so-called ``layer'' and ``co-layer''
type estimates for the homogenized boundary problems. If $\Omega$ is a bounded $C^1$ domain, the
interior Schauder estimate combining with the global H\"older estimate leads to
\begin{equation}\label{f:0.2}
\big\|u_\varepsilon - u_0\big\|_{L^2(\Omega)}
\leq C\varepsilon^{\sigma-\frac{1}{2}}\Big\{\|F\|_{L^p(\Omega)}+\|h\|_{W^{1,p}(\Omega)}
+\|g\|_{C^{0,1}(\partial\Omega)}\Big\}
\end{equation}
for any $\sigma\in(1/2,1)$. Hence, under the same assumptions as in Theorem $\ref{thm:1.1}$, we construct the solution $(u_0,p_0)$ satisfying  $\mathcal{L}_0(u_0)+\nabla p_0 = F$, and $\text{div}(u_0) = h$ in $D_2$,
and $u_0 = u_\varepsilon$ on $\partial D_2$, it follows from the estimate $\eqref{f:0.2}$ that
\begin{equation}\label{f:0.3}
\big\|u_\varepsilon - u_0\big\|_{L^2(D_1)}
\leq C\varepsilon^{\sigma-\frac{1}{2}}\Big\{\|\nabla u_\varepsilon\|_{L^{\infty}(\partial D_2\setminus\Delta_2)}+\text{terms involve given data}\Big\}.
\end{equation}
Obviously, the challenging task is to estimate the quantity $\|\nabla u_\varepsilon\|_{L^{\infty}(\partial D_2\setminus\Delta_2)}$.
Although we can not count on bounding it uniformly, it is possible to derive a nonuniform estimate
for the first term of the right-hand side of $\eqref{f:0.3}$, and we hope that its ``diverging order'' will be smaller than $\sigma-\frac{1}{2}$. In fact, the local Lipschitz estimate together
with the uniform global H\"older estimate gives us
\begin{equation}\label{f:0.4}
\|\nabla u_\varepsilon\|_{L^{\infty}(\partial D_2\setminus\Delta_2)}
\leq C\varepsilon^{\sigma-1}\Big\{\|u_\varepsilon\|_{L^2(D_4)}+\text{terms involve given data}\Big\}.
\end{equation}
Combining the estimates $\eqref{f:0.3}$ and $\eqref{f:0.4}$, we arrive at
\begin{equation*}
\|u_\varepsilon - u_0\|_{L^2(D_1)}
\leq C\varepsilon^{\lambda}\Big\{\|u_\varepsilon\|_{L^2(D_4)}+\text{terms involve given data}\Big\},
\end{equation*}
where $\lambda=2\sigma-\frac{3}{2}$.
It is clear to see that $\lambda\in(0,1/2)$ whenever $(3/4)<\sigma<1$.
We emphasize that we just require a positive power $\lambda$ to proceed the Campanato iteration for
Lipschitz estimates on velocity $u_\varepsilon$.

Compared to the methods developed in \cite{SZW12}, ours involve with  the microcosmic information on coefficients, which may be regarded as the price of sacrificing the symmetry conditions.
The lucky thing is that the requirements of additional smoothness assumptions are not beyond
those in common cases. Another remark is that we essentially use the uniform H\"older
estimates in $\eqref{f:0.4}$, which have already been established in \cite{SGZWS} by the well known
compactness method. In this sense,
the arguments developed here actually blend the compactness methods, convergence rates coupled with the Campanato iteration. Here the iteration argument
(see Lemma $\ref{lemma:5.3}$) belongs to Z. Shen, who notably simplified the proof in \cite{SZW12}.
If the model is replaced by elliptic systems, we may even obtain sharp H\"older estimates for Dirichlet problems, that means it suffices to bound the quantity $\|u_\varepsilon\|_{C^{0,\eta^\prime}(\partial D_2\setminus\Delta_2)}$, where $\eta^\prime\in(0,1)$, and it will be done by the uniform H\"older estimates without the blow-up arguments.
However,
the case of Neumann boundary follows the same way as we addressed here,
since one has to estimate the quantity $\|\nabla u_\varepsilon\|_{L^{\infty}(\partial D_2\setminus\Delta_2)}$, as well. To some extend, this suggests that the Neumann problem will be
more complicated than the Dirichlet one. We will show the Stokes systems with the Neumann boundary conditions in a separate work.
The paragraph ends here by mentioning that the idea of a nonuniform estimate has already been used in the study of elliptic systems with lower order terms by the second author (see \cite{QXS1}).

We now turn to show how to bound the pressure $p_\varepsilon$ in $L^\infty$-norm, which reflects the other innovation of the paper. Note that Lipschitz estimates for $u_\varepsilon$ do not
simply implies the $L^\infty$ estimate for $p_\varepsilon$, since $p_\varepsilon$ is related to $\nabla u_\varepsilon$ by a singular integral. We find that due to the
local Schauder estimates for $\mathcal{L}_0$, it is not hard to derive
\begin{equation}
 \Big(\dashint_{D_r} \big|p_0 - \dashint_{D_s} p_0\big|^2 dx\Big)^{\frac{1}{2}}
 \leq Cs^\rho\bigg\{\|u_0\|_{L^2(D_2)}
 + \text{terms involve given data}\bigg\}
\end{equation}
for any $0<r\leq s\leq 1$, where $ 0<\rho<\min\{\eta,1-d/p\}$. In this form, the above estimate is quite similar to the desired estimate for $p_\varepsilon$. Hence, roughly speaking, the idea is  to transfer the
corresponding estimate for $p_\varepsilon$ to a similar one for $p_0$ by using
the convergent relationship between $p_\varepsilon$ and $p_0$.
However, the first intractable problem is
that $p_\varepsilon$ just weakly converges to $p_0$
in $L^2(\Omega)$,  and we can not expect any precise control of the error. Fortunately, it is known that  $p_\varepsilon$ subtracting its first order approximating corrector strongly converges in $L^2(\Omega)/\mathbb{R}$, where $L^2(\Omega)/\mathbb{R}$ denotes the quotient space of $L^2(\Omega)$ with respect to the relation $u\sim v \Leftrightarrow u-v\in\mathbb{R}$. As will be shown in Section 5, we indeed obtain
\begin{equation}\label{f:0.5}
\begin{aligned}
\|p_\varepsilon-p_0
&-\pi(\cdot/\varepsilon)\psi_{2\varepsilon}
\nabla u_0\|_{L^2(D_1)/\mathbb{R}}\\
&\leq C\varepsilon^{\lambda}\bigg\{
\|u_\varepsilon\|_{L^2(D_4)}
+ \text{terms involve given data}\bigg\},
\end{aligned}
\end{equation}
where $(\chi,\pi)$ is the corrector associated with $(\text{D}_\varepsilon)$, defined in Subsection $\ref{subsec:2.3}$,  and
$\psi_{2\varepsilon}$ is a smooth cut-off function whose expression will be given in Subsection $\ref{subsec:2.1}$.
Although the estimate $\eqref{f:0.5}$ provides us an accurate way to control the rate of the convergence, the control, in fact, is required in each scale $r$ from $\varepsilon$ to $1$.
That means, for example, to bound the quantity
$\big|\dashint_{D_r}p_\varepsilon - \dashint_{D_1}p_\varepsilon dx\big|$ for any $\varepsilon\leq r<(1/4)$, it suffices to consider the quantity
\begin{equation*}
\sum_{j=1}^k \Big|\dashint_{D_{2^{-j}}} p_\varepsilon - \dashint_{D_{2^{-j+1}}} p_\varepsilon dx\Big|
\end{equation*}
where $2^{-k-1}\leq r<2^{-k}$. Hence we need the scaling-invariance version of $\eqref{f:0.5}$ and it will be established in Lemma $\ref{lemma:6.2}$. Besides, in the calculation, we also need to estimate the  quantity
\begin{equation*}
\frac{1}{|D_{2^{-j}}|^{\frac{1}{2}}}\bigg\{\big\|\nabla u_0\big\|_{L^2(D_{2^{-j}}\setminus\Sigma_{4\varepsilon})}
+ \varepsilon\big\|\nabla^2 u_0\big\|_{L^2(D_{2^{-j}}\cap\Sigma_{4\varepsilon})}\bigg\},
\end{equation*}
where $\Sigma_{4\varepsilon} = \{x\in\Omega:\text{dist}(x,\partial\Omega)>4\varepsilon\}$, and it will be
controlled by
\begin{equation*}
C(\varepsilon 2^j)^{\frac{1}{2}}\bigg\{
\|u_0\|_{L^2(D_2)}+\text{terms involve given data}\bigg\}.
\end{equation*}
The concrete statement can be found in Lemma $\ref{lemma:3.5}$.
The above expression also reveals that the convergence rate related to pressure term must reach $O(\varepsilon^{1/2})$ in the local sense,
and it guarantees that
$\sum_{j=1}^{k}(\varepsilon 2^j)^{\frac{1}{2}}$ is convergent whenever $0<\varepsilon<2^{-k}$.
Compared to $O(\varepsilon^\lambda)$ rates for the velocity term, where $\lambda\in(0,1/2)$,
it seems to be an evidence that the $L^\infty$ estimate for the pressure term is harder than
the Lipschitz estimate for the velocity term. In the last step of the proof, we use the fact
that $u_\varepsilon$ strongly converges to $u_0$ in $L^2(D_2)$ due to the homogenization theory.
We finally remark that the case of $0<r\leq \varepsilon$ always follows from the blow-up argument.

The paper is organized as follows. Section 2 is divided into four subsections, including notation,
 smoothing operator and its properties, corrector and its properties, and the classical regularity theory. Section 3 is devoted to study the rate of convergence. We show the uniform Lipschitz estimates on velocity $u_\varepsilon$ in Section 4, and the $L^\infty$ estimates on pressure $p_\varepsilon$ in Section 5.

\section{Preliminaries}
\subsection{Notation in the paper}\label{subsec:2.1}
We first introduce some notation that will be used in the following sections.
\begin{itemize}
  \item $\nabla v = (\nabla_1 v, \cdots, \nabla_d v)$ is the gradient of $v$, where
  $\nabla_i v = \partial v /\partial x_i$ denotes the $i^{\text{th}}$ derivative of $v$. \\
  $\nabla^2 v = (\nabla^2_{ij} v)_{d\times d}$  denotes the Hessian matrix of $v$, where
  $\nabla^2_{ij} v = \frac{\partial^2 v}{\partial x_i\partial x_j}$. \\
  $\text{div}(\textbf{v})=\sum_{i=1}^d \nabla_i v_i$ denotes the divergence of $\textbf{v}$, where
  $\textbf{v} = (v_1,\cdots,v_d)$ is a vector-valued function.
  \item $L^2(\Omega)/\mathbb{R}=\{f\in L^2(\Omega):\int_\Omega f(x)\,dx = 0\}$,
  and $\|f\|_{L^2(\Omega)/\mathbb{R}} = \inf\limits_{c\in\mathbb{R}}\|f-c\|_{L^2(\Omega)}$.
  \item $\delta(x) = \text{dist}(x,\partial\Omega)$ denotes the distance function for $x\in\Omega$, and
  we set $\delta(x) = 0$ if $x\in\mathbb{R}^d\setminus\Omega$.
  \item $S_r = \{x\in\Omega:\text{dist}(x,\partial\Omega) = r\}$ denotes the level set.
  \item $\Omega\setminus\Sigma_r$ denotes the boundary layer with thickness $r>0$, where
  $\Sigma_r = \{x\in\Omega:\text{dist}(x,\partial\Omega)>r\}$.
  \item Let $B=B(x,r)=B_r(x)$, and $kB=B(x,kr)$ denote the concentric balls as $k>0$ varies.
  \item Let $\psi_r$ denote the cut-off function associated with $\Sigma_r$, such that
  \begin{equation}\label{def:2.5}
   \psi_{r} = 1 \quad\text{in}~\Sigma_{2r}, \qquad
   \psi_{r} = 0 \quad\text{outside}~\Sigma_r, \qquad\text{and}\quad |\nabla\psi_r|\leq C/r.
  \end{equation}
\end{itemize}

Throughout the paper, the constant $C$ never depends on $\varepsilon$. Finally
we mention that we shall make a little effort to distinguish vector-valued functions or
function spaces from their real-valued counterparts, and they will be clear from the context.

\subsection{Smoothing operator and its properties}\label{subsection:2.1}

We first state the definition and properties of the smoothing operator $S_\varepsilon$. Detailed proof may be found in \cite{SZW12}.
We fix $\zeta\in C_0^\infty(B(0,1/2))$, and $\int_{\mathbb{R}^d}\zeta(x)dx = 1$ and denote $\zeta_\varepsilon$ by $\zeta_\varepsilon=\varepsilon^{-d}\zeta(x/\varepsilon)$. The smoothing operator $S_\varepsilon$ is defined by
\begin{equation}
S_\varepsilon(f)(x) = f*\zeta_\varepsilon(x) = \int_{\mathbb{R}^d} f(x-y)\zeta_\varepsilon(y) dy,
\end{equation}

\begin{lemma}
Let $f\in L^p(\mathbb{R}^d)$ for some $1\leq p<\infty$. Then for any $\rho\in L_{per}^p(\mathbb{R}^d)$,
\begin{equation}\label{pri:2.7}
\big\|\rho(\cdot/\varepsilon)S_\varepsilon(f)\big\|_{L^p(\mathbb{R}^d)}
\leq C\big\|\rho\big\|_{L^p(Y)}\big\|f\big\|_{L^p(\mathbb{R}^d)},
\end{equation}
where $C$ depends only on $d$.
\end{lemma}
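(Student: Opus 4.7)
The plan is to reduce the weighted $L^p$ bound to a pointwise estimate on $S_\varepsilon(f)$ followed by an application of Fubini and the periodicity of $\rho$. Since $\zeta \in C_0^\infty(B(0,1/2))$, the kernel $\zeta_\varepsilon$ is supported in $B(0,\varepsilon/2)$ and satisfies $\|\zeta_\varepsilon\|_{L^\infty}\le C\varepsilon^{-d}$ and $\|\zeta_\varepsilon\|_{L^1}\le C$, with constants depending only on $\zeta$ (and hence on $d$, since $\zeta$ is a fixed choice).

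First I would use Hölder's inequality pointwise to obtain
\begin{equation*}
|S_\varepsilon(f)(x)|^p \;\le\; \|\zeta_\varepsilon\|_{L^1}^{p/p'}\int_{\mathbb{R}^d}|f(x-y)|^p|\zeta_\varepsilon(y)|\,dy
\;\le\; C\varepsilon^{-d}\int_{B(x,\varepsilon/2)}|f(z)|^p\,dz,
\end{equation*}
where in the second step I bound $|\zeta_\varepsilon|$ by its $L^\infty$-norm and change variables $z=x-y$. Multiplying by $|\rho(x/\varepsilon)|^p$, integrating over $\mathbb{R}^d$, and applying Fubini to swap the order of integration yields
\begin{equation*}
\int_{\mathbb{R}^d}|\rho(x/\varepsilon)|^p|S_\varepsilon(f)(x)|^p\,dx
\;\le\; C\varepsilon^{-d}\int_{\mathbb{R}^d}|f(z)|^p\left(\int_{B(z,\varepsilon/2)}|\rho(x/\varepsilon)|^p\,dx\right)dz.
\end{equation*}

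Next I would exploit the periodicity of $\rho$ to control the inner integral uniformly in $z$. The ball $B(z,\varepsilon/2)$ meets only a dimension-dependent number $N_d$ of translated $\varepsilon$-cells of the form $\varepsilon(Y+k)$ with $k\in\mathbb{Z}^d$. On each such cell, the change of variables $w=x/\varepsilon$ combined with the $\mathbb{Z}^d$-periodicity of $\rho$ gives
\begin{equation*}
\int_{\varepsilon(Y+k)}|\rho(x/\varepsilon)|^p\,dx \;=\; \varepsilon^d\int_Y|\rho(w+k)|^p\,dw \;=\; \varepsilon^d\|\rho\|_{L^p(Y)}^p.
\end{equation*}
Summing over the at most $N_d$ intersecting cells yields $\int_{B(z,\varepsilon/2)}|\rho(x/\varepsilon)|^p\,dx\le C\varepsilon^d\|\rho\|_{L^p(Y)}^p$ with $C=C(d)$.

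Inserting this bound into the previous display cancels the factor $\varepsilon^{-d}$ and produces the desired inequality $\|\rho(\cdot/\varepsilon)S_\varepsilon(f)\|_{L^p(\mathbb{R}^d)}\le C\|\rho\|_{L^p(Y)}\|f\|_{L^p(\mathbb{R}^d)}$. There is no real obstacle here; the only point requiring mild care is the covering step, where one must verify that the number $N_d$ of $\varepsilon$-periodicity cells meeting a ball of radius $\varepsilon/2$ is bounded independently of $\varepsilon$ and $z$, which is immediate by scaling. The argument adapts without modification when $\zeta$ is not taken to be nonnegative, since we only used $\|\zeta\|_{L^1}$, $\|\zeta\|_{L^\infty}$, and $\mathrm{supp}\,\zeta\subset B(0,1/2)$.
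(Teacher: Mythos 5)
Your proof is correct and follows the standard route; the paper itself does not reproduce an argument but defers to Shen's paper \cite{SZW12}, whose proof is precisely this combination of Jensen's inequality for the kernel $\zeta_\varepsilon$, Fubini, and a covering of $B(z,\varepsilon/2)$ by boundedly many $\varepsilon$-periodicity cells. Your observation that only $\|\zeta\|_{L^1}$, $\|\zeta\|_{L^\infty}$, and the support condition are used (so nonnegativity of $\zeta$ is irrelevant) is accurate and consistent with the hypotheses in the paper.
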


\begin{lemma}\label{lemma:2.4}
Let $f\in W^{1,p}(\mathbb{R}^d)$ for some $1<p<\infty$. Then we have
\begin{equation}\label{pri:2.8}
\big\|S_\varepsilon(f)-f\big\|_{L^p(\mathbb{R}^d)}
\leq C\varepsilon\big\|\nabla f\big\|_{L^p(\mathbb{R}^d)},
\end{equation}
and further obtain
\begin{equation}\label{pri:2.9}
\big\|S_\varepsilon(f)\big\|_{L^2(\mathbb{R}^d)}\leq C\varepsilon^{-1/2}\big\|f\big\|_{L^q(\mathbb{R}^d)}
\quad\text{and}\quad
\big\|S_\varepsilon(f)-f\big\|_{L^2(\mathbb{R}^d)}
\leq C\varepsilon^{1/2}\big\|\nabla f\big\|_{L^q(\mathbb{R}^d)},
\end{equation}
where $q = 2d/(d+1)$, and $C$ depends only on $d$.
\end{lemma}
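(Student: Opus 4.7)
The first estimate \eqref{pri:2.8} is the standard $\varepsilon$-order mollification bound, and the plan is to prove it by the fundamental theorem of calculus. Writing
$$S_\varepsilon(f)(x)-f(x)=\int_{\mathbb{R}^d}\bigl(f(x-y)-f(x)\bigr)\zeta_\varepsilon(y)\,dy=-\int_{\mathbb{R}^d}\zeta_\varepsilon(y)\int_0^1 y\cdot\nabla f(x-ty)\,dt\,dy$$
and applying Minkowski's integral inequality reduces the task to bounding $\int|y|\zeta_\varepsilon(y)\,dy$, which is $O(\varepsilon)$ since $\zeta_\varepsilon$ is supported in $B(0,\varepsilon/2)$. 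A density argument extends the inequality from $C_c^\infty$ to $W^{1,p}$.

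For the first bound in \eqref{pri:2.9}, the plan is to apply Young's convolution inequality directly: $S_\varepsilon(f)=f*\zeta_\varepsilon$, and choosing the exponent $r$ with $1+\tfrac12=\tfrac1q+\tfrac1r$ gives $r=\tfrac{2d}{2d-1}$. A scaling calculation yields $\|\zeta_\varepsilon\|_{L^r(\mathbb{R}^d)}=C\varepsilon^{-d(1-1/r)}=C\varepsilon^{-1/2}$, which is exactly the claimed loss. The specific value $q=2d/(d+1)$ is precisely what makes this scaling power equal to $-1/2$.

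For the second bound in \eqref{pri:2.9}, which is the only genuinely delicate piece, my plan is to interpolate. Observe that the Sobolev conjugate of $q=2d/(d+1)$ is $q^*=2d/(d-1)$, and that $2$ is the geometric mean of $q$ and $q^*$ in the sense $\tfrac12=\tfrac12\cdot\tfrac1q+\tfrac12\cdot\tfrac1{q^*}$, so the log-convexity of $L^p$-norms yields $\|h\|_{L^2}\leq \|h\|_{L^q}^{1/2}\|h\|_{L^{q^*}}^{1/2}$ for any $h$. Applying this to $h(x)=f(x-y)-f(x)$ and estimating each factor separately — the $L^q$ factor by the fundamental-theorem-of-calculus bound $\|h\|_{L^q}\leq|y|\,\|\nabla f\|_{L^q}$ (exactly as in the first step), and the $L^{q^*}$ factor by the Gagliardo--Nirenberg--Sobolev embedding $\|h\|_{L^{q^*}}\leq 2\|f\|_{L^{q^*}}\leq C\|\nabla f\|_{L^q}$ — one obtains $\|f(\cdot-y)-f(\cdot)\|_{L^2}\leq C|y|^{1/2}\|\nabla f\|_{L^q}$. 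Feeding this into Minkowski's inequality as in the first paragraph yields $\|S_\varepsilon(f)-f\|_{L^2}\leq C\|\nabla f\|_{L^q}\int|y|^{1/2}\zeta_\varepsilon(y)\,dy\leq C\varepsilon^{1/2}\|\nabla f\|_{L^q}$.

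The main obstacle I anticipate is identifying the right interpolation in the last step; once the Sobolev pairing $(q,q^*)=(\tfrac{2d}{d+1},\tfrac{2d}{d-1})$ is recognized as the one that straddles the $L^2$-exponent symmetrically, everything else is routine. The appearance of $\varepsilon^{-1/2}$ and $\varepsilon^{1/2}$ is essentially forced by this choice of $q$ together with the scaling of $\zeta_\varepsilon$, and no smoothness or cancellation property of $\zeta$ beyond $\int\zeta=1$ is used.
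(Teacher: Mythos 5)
Your argument is correct, and it is a genuinely different route from the one in the cited reference \cite{SZW12}, where these bounds are proved in Fourier space: one writes $\widehat{S_\varepsilon f}(\xi)=\hat\zeta(\varepsilon\xi)\hat f(\xi)$, uses Plancherel to reduce to weighted estimates on $\hat f$, and then applies H\"older together with the Hausdorff--Young inequality $\|\hat f\|_{L^{q'}}\leq C\|f\|_{L^q}$ (and $\|\xi\hat f\|_{L^{q'}}\leq C\|\nabla f\|_{L^q}$), with the dual exponent $q'=2d/(d-1)$; the power $\varepsilon^{\pm1/2}$ comes out of an explicit $L^{2d}$ or $L^d$ computation for $\hat\zeta(\varepsilon\cdot)$ or $\min\{\varepsilon^2,|\xi|^{-2}\}$. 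Your proof works entirely in physical space: Young's inequality for the first half of $\eqref{pri:2.9}$, and for the second half the log-convexity interpolation $\|h\|_{L^2}\leq\|h\|_{L^q}^{1/2}\|h\|_{L^{q^*}}^{1/2}$ combined with the translation bound $\|f(\cdot-y)-f\|_{L^q}\leq|y|\|\nabla f\|_{L^q}$ and the Gagliardo--Nirenberg--Sobolev embedding $\|f\|_{L^{q^*}}\leq C\|\nabla f\|_{L^q}$. The hidden structure you are exploiting is that for $q=2d/(d+1)$ the Sobolev conjugate $q^*=2d/(d-1)$ \emph{coincides} with the H\"older conjugate $q'$, which is exactly why the midpoint exponent is $2$ and why the Fourier proof lands on the same $q'$. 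One small point worth making explicit: your route, unlike the Fourier one, does use the compact support (or at least a finite $|y|^{1/2}$-moment) of $\zeta$ to bound $\int|y|^{1/2}\zeta_\varepsilon(y)\,dy\leq C\varepsilon^{1/2}$, so the closing remark that nothing beyond $\int\zeta=1$ is used is slightly overstated; and the Sobolev step requires $d\geq2$ (i.e.\ $q<d$), which is the standing assumption of the paper. With those caveats noted, the argument is complete and arguably more elementary than the Fourier one since it avoids Plancherel and Hausdorff--Young, at the cost of invoking GNS.
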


\subsection{Correctors and its properties}\label{subsec:2.3}
We will use this subsection to introduce the correctors and dual correctors of Stokes systems in the homogenization theory. Details can be found in the literatures such as \cite{SGZWS,G,G1,G2,QX3}.

Let $Y = [0,1)^d\backsimeq \mathbb{R}^d/\mathbb{Z}^d$.
We define the correctors
$(\chi_k^{\beta\gamma},\pi_k^\gamma)\in H^1_{per}(Y;\mathbb{R}^d)\times L_{per}^2(Y)$
associated with the Stokes
system $(D_\varepsilon)$ by the following cell problem:
\begin{equation}\label{pde:2.1}
\left\{\begin{aligned}
\mathcal{L}_1(\chi_k^\gamma+P_k^\gamma) + \nabla \pi_k^\gamma & = 0 \qquad \text{in}~~\mathbb{R}^d,\\
\text{div}(\chi_k^\gamma) &  = 0  \qquad \text{in}~~\mathbb{R}^d,\\
\int_Y \chi_k^\gamma\  dy &= 0, \qquad k,\gamma = 1,\cdots,d, 
\end{aligned}\right.
\end{equation}
where $P_k^\gamma = y_k e^\gamma = y_k(0,\cdots,1,\cdots,0)$ with 1 in the $\gamma^{\text{th}}$ position,
and $\chi_k^\gamma = (\chi_k^{1\gamma},\cdots,\chi_k^{d\gamma})$. It follows
from \cite[Theorem 2.1]{SGZWS} that
\begin{equation}\label{pri:2.4}
 \|\nabla \chi_k^{\gamma}\|_{L^2(Y)} + \|\pi_k^\gamma\|_{L^2(Y)} \leq C,
\end{equation}
where $C$ depends only on $\mu$ and $d$.
Then the homogenized operator is given by $\mathcal{L}_0 = -\text{div}(\widehat{A}\nabla)$,
where $\widehat{A} = (\hat{a}_{ij}^{\alpha\beta})$ and
\begin{equation}\label{eq:2.1}
 \hat{a}_{ij}^{\alpha\beta} = \int_Y \Big[a_{ij}^{\alpha\beta}
 + a_{ik}^{\alpha\gamma}\frac{\partial}{\partial y_k}\big(\chi_j^{\gamma\beta}\big) \Big]dy
\end{equation}
By the homogenization theory proved in \cite{SGZWS,ABJLGP}, we know that
$$
u_\varepsilon \rightharpoonup u_0 \text{ weakly in } H^1(\Omega;\mathbb{R}^d)\quad \text{ and }\quad p_\varepsilon-\dashint_\Omega p_\varepsilon \rightharpoonup p_0-\dashint_\Omega p_0 \text{ weakly in }L^2(\Omega),
$$
where $(u_0,p_0)$ is the weak solution of the homogenized problem,
\begin{equation*}
(\text{D}_0)\left\{
\begin{aligned}
\mathcal{L}_0(u_0) + \nabla p_0 &= F &\quad &\text{in}~~\Omega, \\
 \text{div} (u_0) &= h &\quad&\text{in} ~~\Omega,\\
 u_0 &= g &\quad&\text{on} ~\partial\Omega.
\end{aligned}\right.
\end{equation*}
We define
\begin{equation*}
 b_{ik}^{\alpha\gamma}(y) = \hat{a}_{ik}^{\alpha\gamma}
 - a_{ik}^{\alpha\gamma}(y) - a_{ij}^{\alpha\beta}(y)\frac{\partial\chi_k^{\beta\gamma}}{\partial y_j}(y), \quad y=\frac{x}{\varepsilon},
\end{equation*}
and it is obvious that $\int_Y b_{ik}^{\alpha\gamma} dy=0$, and $\nabla_i b_{ik}^{\alpha\gamma} = -\nabla_\alpha\pi_k^\gamma$.
As usual, the following lemma provides the definition and the properties of $(E_{jik}^{\alpha\gamma}, q_{ik}^\gamma)$ of Stokes systems, we refer \cite{G,G1,G2,QX3} for more details.
\begin{lemma}\label{lemma:2.3}
There exist $E_{jik}^{\alpha\gamma}\in H^1_{per}(Y)$ and
$q_{ik}^\gamma\in H_{per}^1(Y)$ such that
\begin{equation}\label{eq:2.2}
 b_{ik}^{\alpha\gamma} = \nabla_j E_{jik}^{\alpha\gamma} - \nabla_\alpha q_{ik}^{\gamma},
 \qquad
 E_{jik}^{\alpha\gamma} = - E_{ijk}^{\alpha\gamma},
 \qquad \text{and}\qquad
 \nabla_i q_{ik}^\gamma = \pi_k^\gamma,
\end{equation}
and $E_{jik}^{\alpha\gamma}$ and $q_{ik}^{\gamma}$ admit the priori estimate
\begin{equation}\label{pri:2.3}
\|E_{jik}^{\alpha\gamma}\|_{L^2(Y)} + \|q_{ik}^\gamma\|_{L^2(Y)}\leq C,
\end{equation}
where $C$ depends only on $\mu$ and $d$.
\end{lemma}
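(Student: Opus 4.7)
The plan is to construct the dual corrector pair $(E_{jik}^{\alpha\gamma}, q_{ik}^\gamma)$ in two stages, first producing $q_{ik}^\gamma$ to absorb the pressure contribution in $b_{ik}^{\alpha\gamma}$, and then producing $E_{jik}^{\alpha\gamma}$ from a periodic potential, as is classical for elliptic systems but with the Stokes pressure requiring an extra step. Throughout we use the convention $\int_Y \pi_k^\gamma\,dy=0$ (which we may impose since $\pi_k^\gamma$ is defined only up to an additive constant).

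First, I would solve the periodic Poisson problem $\Delta\phi_k^\gamma = \pi_k^\gamma$ in $Y$ (solvable by the mean-zero normalization of $\pi_k^\gamma$) with $\int_Y\phi_k^\gamma\,dy=0$, and then define
\begin{equation*}
q_{ik}^\gamma := \nabla_i \phi_k^\gamma.
\end{equation*}
Periodicity and $H^2$-regularity on the torus yield $\nabla_i q_{ik}^\gamma = \Delta \phi_k^\gamma = \pi_k^\gamma$ and $\|q_{ik}^\gamma\|_{L^2(Y)}\le C\|\pi_k^\gamma\|_{L^2(Y)}\le C$ by \eqref{pri:2.4}. This handles the third identity of \eqref{eq:2.2} and half of \eqref{pri:2.3}.

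Next, set $\tilde b_{ik}^{\alpha\gamma}:= b_{ik}^{\alpha\gamma} + \nabla_\alpha q_{ik}^\gamma$. I would verify the two compatibility conditions that make $\tilde b$ suitable for the antisymmetric potential construction: mean zero (from $\int_Y b=0$ and periodicity) and vanishing divergence in $i$, since
\begin{equation*}
\nabla_i\tilde b_{ik}^{\alpha\gamma}
=\nabla_i b_{ik}^{\alpha\gamma}+\nabla_\alpha\nabla_i q_{ik}^\gamma
=-\nabla_\alpha\pi_k^\gamma+\nabla_\alpha\pi_k^\gamma=0.
\end{equation*}
Then I solve the periodic Poisson problem $\Delta f_{ik}^{\alpha\gamma}=\tilde b_{ik}^{\alpha\gamma}$ with $\int_Y f_{ik}^{\alpha\gamma}\,dy=0$, and define
\begin{equation*}
E_{jik}^{\alpha\gamma}:=\nabla_j f_{ik}^{\alpha\gamma}-\nabla_i f_{jk}^{\alpha\gamma},
\end{equation*}
which is manifestly antisymmetric in $(j,i)$.

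The crucial algebraic step — which I expect to be the main pitfall — is checking
\begin{equation*}
\nabla_j E_{jik}^{\alpha\gamma}=\Delta f_{ik}^{\alpha\gamma}-\nabla_i(\nabla_j f_{jk}^{\alpha\gamma})=\tilde b_{ik}^{\alpha\gamma}-\nabla_i(\nabla_j f_{jk}^{\alpha\gamma}),
\end{equation*}
so I must show $\nabla_j f_{jk}^{\alpha\gamma}\equiv 0$. This follows from the periodic Liouville principle: $\Delta(\nabla_j f_{jk}^{\alpha\gamma})=\nabla_j\tilde b_{jk}^{\alpha\gamma}=0$, so $\nabla_j f_{jk}^{\alpha\gamma}$ is a periodic harmonic function, hence constant, and its $Y$-mean vanishes by periodicity, so it is identically zero. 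This delivers the first identity in \eqref{eq:2.2}. Finally, standard periodic elliptic regularity gives $\|f_{ik}^{\alpha\gamma}\|_{H^2(Y)}\le C\|\tilde b_{ik}^{\alpha\gamma}\|_{L^2(Y)}$, and since \eqref{pri:2.4} together with the already-proved $L^2$-bound on $q_{ik}^\gamma$ (upgraded to $H^1$ via elliptic regularity for $\phi_k^\gamma$) controls $\|\tilde b\|_{L^2(Y)}$, we conclude $\|E_{jik}^{\alpha\gamma}\|_{L^2(Y)}\le C\|\nabla f_{ik}^{\alpha\gamma}\|_{L^2(Y)}\le C$, completing \eqref{pri:2.3}.
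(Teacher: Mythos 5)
Your construction is correct and is essentially the standard flux-corrector argument for Stokes systems that the paper delegates to the references \cite{G,G1,G2,QX3} rather than reproducing.

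The two-step structure is exactly right: the obstruction to building an antisymmetric potential for $b_{ik}^{\alpha\gamma}$ directly is that $\nabla_i b_{ik}^{\alpha\gamma}=-\nabla_\alpha\pi_k^\gamma\neq 0$, so one must first absorb the pressure. Your choice $q_{ik}^\gamma=\nabla_i\phi_k^\gamma$ with $\Delta\phi_k^\gamma=\pi_k^\gamma$ (after normalizing $\int_Y\pi_k^\gamma=0$) makes the third identity $\nabla_i q_{ik}^\gamma=\pi_k^\gamma$ automatic, and then $\tilde b_{ik}^{\alpha\gamma}=b_{ik}^{\alpha\gamma}+\nabla_\alpha q_{ik}^\gamma$ is mean-zero and divergence-free in $i$, which is precisely the classical (Jikov--Kozlov--Oleinik) setting for the skew-symmetric potential $E_{jik}^{\alpha\gamma}=\nabla_j f_{ik}^{\alpha\gamma}-\nabla_i f_{jk}^{\alpha\gamma}$ with $\Delta f_{ik}^{\alpha\gamma}=\tilde b_{ik}^{\alpha\gamma}$. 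Your observation that $\nabla_j f_{jk}^{\alpha\gamma}$ is periodic, harmonic, and mean-zero (hence vanishes) is the key algebraic verification and is handled correctly. The bounds follow from $\|\pi_k^\gamma\|_{L^2(Y)}+\|\nabla\chi_k^\gamma\|_{L^2(Y)}\le C$ via $H^2$-regularity for the torus Laplacian, exactly as you say; in particular $\|\nabla q_{ik}^\gamma\|_{L^2(Y)}\le C\|\pi_k^\gamma\|_{L^2(Y)}$ gives $\tilde b\in L^2(Y)$ with a uniform bound, so $f\in H^2_{per}(Y)$ and $E\in H^1_{per}(Y)$ with the stated $L^2$ estimate. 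Nothing is missing.
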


\begin{lemma}
Let $\big\{E_{jik}^{\alpha\gamma},q_{ik}^\gamma\big\}$ be given
in Lemma $\ref{lemma:2.3}$. Additionally, if the correctors $\chi_k=(\chi_k^{\beta\gamma})$ with $k=1,\cdots,d$
are H\"older continuous, then $E_{jik}^{\alpha\gamma},q_{ik}^\gamma\in L^\infty(Y)$.
\end{lemma}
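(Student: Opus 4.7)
The plan is a bootstrap argument: from H\"older continuity of $\chi_k$, I would invoke Schauder theory for periodic Stokes systems to upgrade the regularity of the correctors, deduce that $b_{ik}^{\alpha\gamma}$ is bounded, and then read off the $L^\infty$ bound of $(E_{jik}^{\alpha\gamma},q_{ik}^\gamma)$ from the auxiliary periodic system they solve.

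First, I would apply interior Schauder estimates to the cell problem \eqref{pde:2.1}. This is a periodic Stokes system whose coefficients satisfy the H\"older condition \eqref{a:3}, and whose weak solution $\chi_k^\gamma$ is already H\"older continuous by hypothesis. The Schauder theory for Stokes systems on the torus (as recorded in \cite{SGZWS}) then lifts $\chi_k^\gamma$ to $C^{1,\tau}(Y)$ and the pressure corrector $\pi_k^\gamma$ to $C^{0,\tau}(Y)$ after subtracting its mean. In particular, $\nabla\chi_k^\gamma$ and $\pi_k^\gamma$ both lie in $L^\infty(Y)$ with quantitative bounds depending only on $\mu,\kappa,\tau,d$.

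Next, the defining expression
\[
b_{ik}^{\alpha\gamma}(y)=\hat{a}_{ik}^{\alpha\gamma}-a_{ik}^{\alpha\gamma}(y)-a_{ij}^{\alpha\beta}(y)\,\nabla_{y_j}\chi_k^{\beta\gamma}(y)
\]
immediately yields $b_{ik}^{\alpha\gamma}\in L^\infty(Y)$, since $A$ and $\widehat{A}$ are bounded by ellipticity and $\nabla\chi_k^\gamma$ is bounded by the previous step; in fact $b_{ik}^{\alpha\gamma}$ inherits $C^{0,\tau}(Y)$ regularity.

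Third, I would recall the construction of $(E_{jik}^{\alpha\gamma},q_{ik}^\gamma)$ used in Lemma \ref{lemma:2.3}: for each fixed $i,k,\gamma$, the pair is produced as the periodic, mean-zero solution of an auxiliary Stokes-type system on the torus whose source is linear in $b_{ik}^{\alpha\gamma}$, coupled with the constraint $\nabla_i q_{ik}^\gamma=\pi_k^\gamma$ recorded in \eqref{eq:2.2}; after solving, $E_{jik}^{\alpha\gamma}$ is obtained from an antisymmetrization of first derivatives of the auxiliary potential. Since $b_{ik}^{\alpha\gamma}$ and $\pi_k^\gamma$ are H\"older continuous (hence in every $L^p(Y)$), standard Schauder theory for periodic Stokes systems yields $C^{1,\tau}$ regularity for the potential and $C^{0,\tau}$ regularity for the pressure, whence $E_{jik}^{\alpha\gamma},q_{ik}^\gamma\in C^{0,\tau}(Y)\subset L^\infty(Y)$. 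Alternatively, one can feed $b\in L^\infty\subset L^p$ with $p>d$ into the $L^p$-theory and conclude via Sobolev embedding. The main obstacle I anticipate is the bookkeeping in this final step: untangling the antisymmetrization of $E$ and matching the correct periodic Stokes (or Poisson, after eliminating the pressure) regularity statement to each component is the only technical point, since Steps 1 and 2 are essentially immediate consequences of existing Schauder theory.
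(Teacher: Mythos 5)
Your argument does reach the conclusion, but it is not the proof the paper is pointing to, and it proves something both stronger and narrower than the lemma asserts. The lemma is stated so that the \emph{only} extra hypothesis is H\"older continuity of the correctors $\chi_k$; it does not assume the H\"older regularity \eqref{a:3} of $A$. Your very first step invokes Schauder theory for the cell problem, which needs \eqref{a:3}, and in fact once you use Schauder the hypothesis ``$\chi_k$ H\"older continuous'' becomes redundant: Schauder already gives $\chi_k\in C^{1,\tau}(Y)$ and $\pi_k^\gamma\in C^{0,\tau}(Y)$ directly from the smoothness of $A$. So your route discards the minimality that the lemma is designed to keep (the formulation with H\"older $\chi_k$ is aimed at settings such as $d=2$ or VMO coefficients where the correctors are H\"older continuous without $A$ being H\"older continuous). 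The cited proof, \cite[Lemma 5.1]{QX3}, follows the Avellaneda--Lin/Kenig--Lin--Shen template and uses only the stated hypothesis: H\"older continuity of $\chi_k$ combined with the Caccioppoli inequality for the cell problem yields a Morrey bound $\int_{B_r}|\nabla\chi_k|^2\le C\,r^{d-2+2\sigma}$, hence $b_{ik}^{\alpha\gamma}$ and (via the Stokes version of Campanato's lemma) $\pi_k^\gamma$ lie in the Morrey space $L^{2,d-2+2\sigma}(Y)$; since $E$ and $q$ are first derivatives of solutions of periodic Poisson problems with these functions as data, Morrey-space elliptic regularity gives $\nabla f\in C^{0,\sigma}\subset L^\infty$, whence $E,q\in L^\infty(Y)$. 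No Schauder theory enters. In the context of this paper, where \eqref{a:3} is a standing assumption, your shortcut is valid and gives even more (H\"older, not merely $L^\infty$, dual correctors), so it is an acceptable alternative here; but it is a genuinely different and less economical argument, and it would not survive as a proof of the lemma as stated in general.

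One smaller inaccuracy in your third step: the auxiliary problems producing $(E_{jik}^{\alpha\gamma},q_{ik}^\gamma)$ are not Stokes-type systems. One first solves the periodic Poisson equation $\Delta\phi_k^\gamma=\pi_k^\gamma$ with $\int_Y\phi_k^\gamma=0$ and sets $q_{ik}^\gamma=\nabla_i\phi_k^\gamma$ (so that $\nabla_iq_{ik}^\gamma=\pi_k^\gamma$); then, since $b_{ik}^{\alpha\gamma}+\nabla_\alpha q_{ik}^\gamma$ is divergence-free in $i$ with mean zero, one solves $\Delta f_{ik}^{\alpha\gamma}=b_{ik}^{\alpha\gamma}+\nabla_\alpha q_{ik}^\gamma$ and sets $E_{jik}^{\alpha\gamma}=\nabla_j f_{ik}^{\alpha\gamma}-\nabla_i f_{jk}^{\alpha\gamma}$, with antisymmetry built in from the start. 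The ``bookkeeping'' you anticipated as the main obstacle is therefore just two scalar Poisson solves, and the conclusion you want is that $\nabla\phi_k^\gamma$ and $\nabla f_{ik}^{\alpha\gamma}$ are bounded.
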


\begin{proof}
See \cite[Lemma 5.1]{QX3}
\end{proof}

\begin{remark}\label{remark:3.1}
\emph{If $\pi\in L_{per}^2(Y)$,
there exists $V\in H^1_{per}(Y;\mathbb{R}^d)$ such that
$\text{div}_y(V)= \pi(y) -\widehat{\pi}$ in $\mathbb{R}^d$. Moreover
if $\pi\in L^\infty(\mathbb{R}^d)$, then we have $\|V\|_{L^\infty(\mathbb{R}^d)}\leq C(\mu,\tau,\kappa,d)$. The proof is similar to that in \cite[Lemma 5.1]{QX3}.}
\end{remark}

\subsection{Classical regularity theory}
In this section, we recall some classical results, including the local and global versions,
which mainly come from \cite{MGMG,GPGCG,OAL} and the reference therein.
To avoid confusion, we use the notation
$\mathcal{L}(u) = \text{div}\big[A(x)\nabla u\big]$ to denote the operator with variable coefficients that does not depend on $\varepsilon$.

\begin{thm}[Global H\"older estimates]
Let $\Omega$ be a bounded $C^{1}$ domain.
If $A\in \emph{VMO}$ satisfies $\eqref{a:1}$. Given $F\in L^{p}(\Omega;\mathbb{R}^d)$, $h\in L^{2p}(\Omega)$ with $2p>d$ and
$g\in C^{0,1}(\partial\Omega;\mathbb{R}^d)$ satisfying the compatibility condition $\eqref{a:5}$, let $(u,\varrho)\in H^1(\Omega;\mathbb{R}^d)\times L^2(\Omega)$ be the weak  solution to the
Dirichlet problem:
$\mathcal{L}(u)+\nabla\varrho = F$ and $\emph{div}(u) = h$ in $\Omega$ and
$u=g$ on $\partial\Omega$. Then for any $0<\sigma<1$, we have
  \begin{equation}\label{pri:2.16}
  \|u\|_{C^{0,\sigma}(\Omega)} \leq C\Big\{\|F\|_{L^p(\Omega)} + \|h\|_{L^{2p}(\Omega)}
  +\|g\|_{C^{0,1}(\partial\Omega)}\Big\},
  \end{equation}
  where $C$ depends on $\mu,\omega,d,p,\sigma$ and $\Omega$.
\end{thm}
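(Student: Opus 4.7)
The plan is to reduce the system to one with zero boundary data and divergence-free velocity, apply the $W^{1,q}$ Calder\'on--Zygmund theory for Stokes systems with VMO coefficients on bounded $C^1$ domains, and conclude via Morrey's embedding. This is the classical strategy behind \cite{MGMG,GPGCG,OAL}, and I will only sketch how the reductions match up.

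First I would handle the boundary and divergence data. Extend $g$ to $\widetilde g\in C^{0,1}(\overline\Omega;\mathbb R^d)$ with $\|\widetilde g\|_{C^{0,1}(\overline\Omega)}\le C\|g\|_{C^{0,1}(\partial\Omega)}$. The compatibility hypothesis $\eqref{a:5}$ together with the divergence theorem gives $\int_\Omega(h-\text{div}\,\widetilde g)\,dx=0$, so Bogovskii's operator produces $\phi\in W^{1,2p}_0(\Omega;\mathbb R^d)$ with $\text{div}\,\phi=h-\text{div}\,\widetilde g$ and
$$
\|\nabla\phi\|_{L^{2p}(\Omega)}\le C\bigl\{\|h\|_{L^{2p}(\Omega)}+\|g\|_{C^{0,1}(\partial\Omega)}\bigr\}.
$$
Setting $w:=u-\widetilde g-\phi$, I obtain $w\in H^1_0(\Omega;\mathbb R^d)$ with $\text{div}\,w=0$ and
$$
\mathcal L(w)+\nabla\varrho=F+\text{div}\bigl(A\nabla(\widetilde g+\phi)\bigr)\quad\text{in }\Omega.
$$

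Next I would invoke the $W^{1,q}$-regularity for Stokes with VMO coefficients in bounded $C^1$ domains: for any admissible $1<q<\infty$,
$$
\|\nabla w\|_{L^q(\Omega)}\le C\bigl\{\|F\|_{W^{-1,q}(\Omega)}+\|A\nabla(\widetilde g+\phi)\|_{L^q(\Omega)}\bigr\}.
$$
Given a target $\sigma\in(0,1)$, I would pick $q>d/(1-\sigma)$: since $2p>d$, the Sobolev dual embedding $L^p(\Omega)\hookrightarrow W^{-1,q}(\Omega)$ holds for $q$ in a nonempty interval above $d$, while $A\nabla(\widetilde g+\phi)\in L^{2p}\subset L^q$ whenever $q\le 2p$. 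Morrey's inequality $W^{1,q}(\Omega)\hookrightarrow C^{0,1-d/q}(\overline\Omega)$ then produces $w\in C^{0,\sigma}(\overline\Omega)$ with the required bound, and adding the H\"older norms of $\widetilde g$ and of $\phi$ (the latter via Morrey applied to $\phi\in W^{1,2p}$) recovers the estimate $\eqref{pri:2.16}$ for $u$.

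The main obstacle, and the nontrivial ingredient, is the $W^{1,q}$ Calder\'on--Zygmund estimate for Stokes systems with VMO coefficients in a $C^1$ domain for $q$ arbitrarily large; it rests on the vanishing mean oscillation of $A$ and a perturbation of the constant-coefficient Stokes singular integrals, and is precisely what is borrowed from \cite{MGMG,GPGCG,OAL} rather than reproved here. Beyond this, the Lipschitz extension of $g$, the Bogovskii lift, and the Morrey embedding are routine; the real content of the theorem lies in invoking the Stokes-VMO $W^{1,q}$ theory and matching exponents so that $q$ can be pushed large enough for Morrey to deliver the claimed $\sigma$.
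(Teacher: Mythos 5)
The paper offers no proof of this theorem: it is stated as classical and deferred to \cite{MGMG,GPGCG,OAL}, so there is no internal argument to compare against. Your route --- Lipschitz extension $\widetilde g$ of $g$, Bogovskii lift $\phi$ of $h-\operatorname{div}\widetilde g$ using the compatibility condition $\eqref{a:5}$, reduction to a divergence-free zero-boundary remainder $w$, the $W^{1,q}$ Calder\'on--Zygmund theory for Stokes with VMO coefficients, and Morrey's embedding --- is the natural and correct strategy, and the reduction itself is sound.

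There is, however, a genuine gap in the exponent matching at the final step. You require simultaneously $q>d/(1-\sigma)$ (so that Morrey delivers $C^{0,\sigma}$) and $q\le 2p$ (so that $A\nabla(\widetilde g+\phi)\in L^q$, the Bogovskii lift being only in $W^{1,2p}_0$ when $h\in L^{2p}$). These two requirements can be met at the same time only when $\sigma<1-d/(2p)$, so your argument proves the estimate only in that restricted range, not for every $\sigma\in(0,1)$ as stated. This cap is not an artifact of the method: the constraint $\operatorname{div}u=h$ with $h$ merely in $L^{2p}$ genuinely limits the H\"older modulus of $u$ to $1-d/(2p)$, and the sharp form of this classical result (cf.\ the boundary H\"older estimate in \cite{SGZWS}) ties the admissible $\sigma$ to the integrability of $h$ rather than letting it roam over all of $(0,1)$. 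Observe that in the only place the paper applies $\eqref{pri:2.16}$, namely Lemma $\ref{lemma:3.4}$, the data satisfy $h\in W^{1,p}(\Omega)$ with $p>d$, so $h\in L^\infty(\Omega)$ and the Bogovskii lift lies in $W^{1,q}_0$ for every finite $q$; the remaining constraint from $F\in L^p$ is $q\le pd/(d-p)$ when $p<d$ and vacuous when $p\ge d$, which does allow $\sigma$ arbitrarily close to $1$. You should either restrict $\sigma$ explicitly to $(0,1-d/(2p))$ in your proof, or strengthen the integrability hypothesis on $h$ to match the claimed range of $\sigma$.
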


\begin{thm}[Local estimates]
Let $\Omega$ be a $C^{1,\tau}$ domain. Assume that $A$ satisfies $\eqref{a:1}$ and
$\eqref{a:3}$. Let $(u,\varrho)\in H^1(D_5;\mathbb{R}^d)\times L^2(D_5)$ be the weak solution
to $\mathcal{L}(u)+\nabla \varrho = F$ and $\emph{div}(u) = h$ in $D_5$,
and $u=g$ on $\Delta_5$,
where $F\in L^p(D_5;\mathbb{R}^d)$ and $h\in W^{1,p}(D_5)$ with $p>d$, and
$g\in C^{1,\eta}(\Delta_5;\mathbb{R}^d)$ with $g(0)=0$.
Then for any $0<\rho\leq \min\{\eta,1-d/p\}$, we have the following estimate:
\begin{itemize}
  \item [\emph{(1)}] Interior Schauder estimate, for any $B_{2r}\subset D_5$,
  there holds
  \begin{equation}\label{pri:2.10}
  \begin{aligned}
  \big[\nabla u\big]_{C^{0,\rho}(B_r)} + \big[\varrho\big]_{C^{0,\rho}(B_r)}
  &\leq Cr^{-\rho}\bigg\{\frac{1}{r}\Big(\dashint_{B_{2r}}|u-c|^2dx\Big)^{\frac{1}{2}}\\
  &+r\Big(\dashint_{B_{2r}}|F|^pdx\Big)^{\frac{1}{p}}
  +r\Big(\dashint_{B_{2r}}|\nabla h|^pdx\Big)^{\frac{1}{p}}
  +\|h\|_{L^\infty(B_{2r})}
  \bigg\}
  \end{aligned}
  \end{equation}
  for any $c\in\mathbb{R}^d$, where $C$ depends only on $\mu,\tau,\kappa,d,p$ and $\rho$.
  \item [\emph{(2)}] Boundary Schauder estimate, for any $D_{2r}\subset D_5$ and $c\in\mathbb{R}^d$, we have
  \begin{equation}\label{pri:2.18}
  \begin{aligned}
  \big[\nabla u\big]_{C^{0,\rho}(D_r)} + \big[\varrho\big]_{C^{0,\rho}(D_r)}
  &\leq Cr^{-\rho}\bigg\{\frac{1}{r}\Big(\dashint_{D_{2r}}|u|^2dx\Big)^{\frac{1}{2}}+r\Big(\dashint_{D_{2r}}|F|^pdx\Big)^{\frac{1}{p}}+r\Big(\dashint_{D_{2r}}|\nabla h|^pdx\Big)^{\frac{1}{p}}\\
  &\quad+\|h\|_{L^\infty(D_{2r})}
  +\|\nabla g\|_{L^\infty(D_{2r})}
  +r^{\eta}[\nabla g]_{C^{0,\eta}(\Delta_{2r})}
  \bigg\},
  \end{aligned}
  \end{equation}
  where $C$ depends on $\mu,\tau,\kappa,d,p,\eta,M$ and $\rho$.
    \item [\emph{(3)}] Boundary Lipschitz estimate, for any $0<r\leq R\leq 1$, we have
  \begin{equation}\label{pri:2.17}
  \begin{aligned}
  \big\|\nabla u\big\|_{L^{\infty}(D_r)} + \big\|\varrho-&\dashint_{D_{R}}\varrho\big\|_{L^{\infty}(D_r)}
  \leq C\bigg\{\frac{1}{R}\Big(\dashint_{D_{2R}}|u|^2dx\Big)^{\frac{1}{2}}+R\Big(\dashint_{D_{2R}}|F|^pdx\Big)^{\frac{1}{p}}\\
  &+R\Big(\dashint_{D_{2R}}|\nabla h|^pdx\Big)^{\frac{1}{p}}+\|h\|_{L^\infty(D_{2R})}
  +\|\nabla g\|_{L^\infty(D_{2R})}
  +R^{\eta}[\nabla g]_{C^{0,\eta}(\Delta_{2R})}
  \bigg\}
  \end{aligned}
  \end{equation}
  where $C$ depends on $\mu,\tau,\kappa,d,p,\eta,M$ and $\rho$.
\end{itemize}
\end{thm}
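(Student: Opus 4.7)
The plan is to establish parts (1)–(3) by the classical freezing-of-coefficients plus Campanato iteration argument, adapted to the Stokes setting where the pressure enters as a Lagrange multiplier for the divergence constraint. Since $A \in C^{0,\tau}$ by \eqref{a:3}, the whole strategy is a perturbation off the constant-coefficient Stokes theory.

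First, I would reduce the divergence constraint to the homogeneous case. Using a Bogovskii-type operator on $D_{2r}$, solve $\operatorname{div}(w) = h - \bar h$ (where $\bar h$ is the mean) with $\|\nabla w\|_{L^\infty} + r^\rho [\nabla w]_{C^{0,\rho}} \lesssim \|h\|_{L^\infty(D_{2r})} + r\bigl(\dashint_{D_{2r}}|\nabla h|^p\bigr)^{1/p}$, and then replace $u$ by $u-w$. This moves the $h$-terms to the right-hand side of an updated Stokes system with divergence-free unknown, and modifies $F$ by a term already inside the target estimate. Similarly, for the boundary estimates, subtract a $C^{1,\eta}$ extension $G$ of $g$ from $u$ to reduce to the zero-Dirichlet case; this costs only $\|\nabla g\|_{L^\infty} + r^\eta[\nabla g]_{C^{0,\eta}(\Delta_{2r})}$ on the right-hand side.

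For part (1), I would fix $x_0 \in B_r$ and compare $(u,\varrho)$ to the solution $(v,\pi)$ of the constant-coefficient Stokes system $-\operatorname{div}(A(x_0)\nabla v)+\nabla\pi = F(x_0)$, $\operatorname{div}(v)=0$ in $B_{\theta r}$ with $v = u$ on $\partial B_{\theta r}$. The constant-coefficient theory yields smooth $v$, hence a Campanato-type decay
\begin{equation*}
\dashint_{B_{tr}}\Big|\nabla v - \dashint_{B_{tr}}\nabla v\Big|^2 \,dx \;\leq\; C t^{2} \dashint_{B_{\theta r}}|\nabla v|^2\,dx
\end{equation*}
for $0<t\leq \theta$. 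The remainder $(u-v,\varrho-\pi)$ satisfies a Stokes system with forcing $\operatorname{div}((A-A(x_0))\nabla u) + (F - F(x_0))$, whose Cacciopoli-type $L^2$-norm is bounded by $\kappa r^\tau \|\nabla u\|_{L^2}$ plus $r^{1-d/p}\|F\|_{L^p}$ and similar data terms. Combining the two, the quantity $\Phi(r):=r^{-2\rho}\dashint_{B_r}|\nabla u-(\nabla u)_{B_r}|^2\,dx$ obeys an inequality $\Phi(tr) \leq C(t^{2}\Phi(r) + \text{data})$ for $0<\rho<\min\{\tau,1-d/p\}$; the standard iteration lemma then closes \eqref{pri:2.10}. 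The pressure estimate is read off in parallel from $\varrho - \pi$ by testing against divergence-free fields, which is the standard Nečas/Bogovskii duality.

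For parts (2) and (3), I would flatten the boundary by the $C^{1,\tau}$ change of variables $\Phi(x',x_d)=(x',x_d-\psi(x'))$, which preserves the Hölder structure of $A$ (producing new coefficients still in $C^{0,\tau}$) and turns $D_{2r}$ into a half-ball. Then repeat the freezing-coefficient Campanato scheme at a boundary point, replacing the comparison problem by a constant-coefficient Stokes problem on the half-ball with zero Dirichlet data; the corresponding boundary Campanato decay for $\nabla v$ on half-balls is standard and yields \eqref{pri:2.18}. The Lipschitz estimate \eqref{pri:2.17} follows immediately from (2) by integrating the $C^{0,\rho}$ bound on $\nabla u$ and $\varrho-\dashint_{D_R}\varrho$ along a chain of shrinking half-balls from $\partial\Omega$ inward, together with $\nabla u \in C^{0,\rho}$ up to the flattened boundary.

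The main obstacle is the pressure: unlike for scalar elliptic equations, $\varrho$ is a nonlocal object coupled to $\nabla u$ through the divergence constraint, so the freezing argument cannot separate $u$ and $\varrho$. The careful point is to choose the comparison pair $(v,\pi)$ so that not only does $v$ match $u$ on $\partial B_{\theta r}$ but also $\int \pi = \int \varrho$ on that ball, and to bound $\|\varrho-\pi\|_{L^2}/\mathbb{R}$ via the divergence-free test-function/Bogovskii duality; this is precisely what allows $[\varrho]_{C^{0,\rho}}$ to inherit the same Campanato decay as $[\nabla u]_{C^{0,\rho}}$, without losing a derivative.
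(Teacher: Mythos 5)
Your proposal is essentially correct, but note that the paper does not prove this theorem at all: it is stated in the ``Classical regularity theory'' subsection and simply cited from the literature (Giaquinta--Modica, Galdi--Simader, Ladyzhenskaya). What you have written is a faithful reconstruction of the standard argument in those references --- freezing of coefficients, comparison with the constant-coefficient Stokes system, Campanato iteration for $\nabla u$, recovery of the pressure oscillation by testing against divergence-free (Bogovskii) fields, and boundary flattening --- so there is no conflict with the paper, only more detail than it supplies. Two technical points that your sketch glosses over and that would need care in a full write-up: first, the Bogovskii operator is not bounded on H\"older spaces in the naive sense (it is a singular integral), so for the reduction $\operatorname{div}(w)=h-\bar h$ with $C^{0,\rho}$ control of $\nabla w$ you should instead take $w=\nabla\phi$ with $\Delta\phi=h-\bar h$ and invoke scalar Schauder theory, accepting the resulting nonzero trace of $w$ as a modification of $g$; second, the $C^{1,\tau}$ boundary flattening does not preserve the divergence operator, so either the Piola transform must be used for the velocity or the extra first-order terms generated in the constraint must be absorbed into the data. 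Neither point is an obstruction, and with them supplied your outline closes parts (1)--(3) as stated (with the usual caveat that the endpoint $\rho=\min\{\eta,1-d/p\}$ requires the Morrey--Campanato embedding at the critical exponent rather than the strict-inequality iteration).
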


\section{Convergence rates}

\begin{thm}\label{thm:3.3}
Let $\Omega$ be a $C^{1}$ domain. Suppose that $A$ satisfies $\eqref{a:1}-\eqref{a:2}$.
Given $F\in L^p(\Omega;\mathbb{R}^d)$ and $h\in W^{1,p}(\Omega)$ with $p>d$,
and $g\in C^{0,1}(\partial\Omega;\mathbb{R}^d)$ satisfying the compatibility condition $\eqref{a:5}$, and we
assume that $(u_\varepsilon,p_\varepsilon)$, $(u_0,p_0)$ in
$H^1(\Omega;\mathbb{R}^d)\times L^2(\Omega)$ are weak solutions to Stokes systems $(\text{D}_\varepsilon)$, $(\text{D}_0)$, respectively. Then for any $(1/2)< \sigma<1$,
we have
\begin{equation}\label{pri:3.13}
\begin{aligned}
\|u_\varepsilon-u_0\|_{L^2(\Omega)} +
\|p_\varepsilon-p_0
 -\pi(\cdot/\varepsilon)S_\varepsilon(\psi_{2\varepsilon}
\nabla u_0)\|_{L^2(\Omega)/\mathbb{R}}\leq C\varepsilon^{\sigma-\frac{1}{2}}\Big\{\|F\|_{L^p(\Omega)}
+ \|h\|_{W^{1,p}(\Omega)}
+\|g\|_{C^{0,1}(\partial\Omega)}\Big\}
\end{aligned}
\end{equation}
where $C$ depends only on $\mu,d,\sigma,p$ and $\Omega$.
\end{thm}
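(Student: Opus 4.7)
The plan is a two-scale expansion adapted to the Dirichlet Stokes system. Introduce the remainders
\begin{equation*}
w_\varepsilon = u_\varepsilon - u_0 - \varepsilon\chi_k^\gamma(x/\varepsilon)\,S_\varepsilon\bigl(\psi_{2\varepsilon}\nabla_k u_0^\gamma\bigr),\qquad
\theta_\varepsilon = p_\varepsilon - p_0 - \pi_k^\gamma(x/\varepsilon)\,S_\varepsilon\bigl(\psi_{2\varepsilon}\nabla_k u_0^\gamma\bigr),
\end{equation*}
where the cut-off $\psi_{2\varepsilon}$ pushes the corrector activity away from $\partial\Omega$ (so that $w_\varepsilon=0$ on $\partial\Omega$), and $S_\varepsilon$ regularizes $\nabla u_0$, which is not globally bounded when $\Omega$ is only $C^1$. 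A direct calculation using the cell problem \eqref{pde:2.1} together with the dual-corrector identity $b_{ik}^{\alpha\gamma}=\nabla_j E_{jik}^{\alpha\gamma}-\nabla_\alpha q_{ik}^\gamma$ from Lemma \ref{lemma:2.3} recasts the residual equation in divergence form,
\begin{equation*}
\mathcal{L}_\varepsilon(w_\varepsilon)+\nabla\theta_\varepsilon
= \partial_i\Bigl\{a_{ij}^{\alpha\beta}(\cdot/\varepsilon)\bigl[\nabla_j u_0^\beta-S_\varepsilon(\psi_{2\varepsilon}\nabla_j u_0^\beta)\bigr]+\varepsilon E_{jik}^{\alpha\gamma}(\cdot/\varepsilon)\nabla_j S_\varepsilon(\psi_{2\varepsilon}\nabla_k u_0^\gamma)\Bigr\}+R_\varepsilon,
\end{equation*}
where the skew-symmetry $E_{jik}^{\alpha\gamma}=-E_{ijk}^{\alpha\gamma}$ is essential to avoid placing a second derivative on $u_0$, and $R_\varepsilon$ collects analogous manageable terms. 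Since each $\chi_k^\gamma$ is divergence-free, $\mathrm{div}(w_\varepsilon)$ is a small commutator-type term which I will absorb by a Bogovskii corrector $z_\varepsilon$ with $z_\varepsilon=0$ on $\partial\Omega$ and $\|\nabla z_\varepsilon\|_{L^2}\leq C\|\mathrm{div}(w_\varepsilon)\|_{L^2}$.

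Applying the standard energy estimate for the Dirichlet Stokes system to $(w_\varepsilon-z_\varepsilon,\theta_\varepsilon)$, and controlling each product $\rho(\cdot/\varepsilon)S_\varepsilon(\psi_{2\varepsilon}f)$ via \eqref{pri:2.7} with $\rho\in\{\chi,\pi,E,q\}\subset L^2_{\mathrm{per}}(Y)$, together with the approximation inequalities of Lemma \ref{lemma:2.4}, I expect to obtain
\begin{equation*}
\|\nabla w_\varepsilon\|_{L^2(\Omega)}+\|\theta_\varepsilon\|_{L^2(\Omega)/\mathbb{R}}
\leq C\Bigl(\varepsilon\|\nabla^2 u_0\|_{L^2(\Sigma_{2\varepsilon})}+\|\nabla u_0\|_{L^2(\Omega\setminus\Sigma_{2\varepsilon})}\Bigr)+C\varepsilon\|\nabla u_0\|_{L^2(\Omega)}.
\end{equation*}
The first summand reflects the interior loss of $S_\varepsilon$ where $\psi_{2\varepsilon}=1$, and the second captures the boundary layer where $\psi_{2\varepsilon}$ may vanish.

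To quantify the right-hand side I rely on the regularity of $u_0$. Since periodic $A$ is in $\mathrm{VMO}$, the global Hölder bound \eqref{pri:2.16} gives $\|u_0\|_{C^{0,\sigma}(\Omega)}\leq C\|\text{data}\|$ for any $\sigma\in(0,1)$; applying the interior Schauder estimate \eqref{pri:2.10} on $B(x,\delta(x)/2)\subset\Omega$ upgrades this to the pointwise interior decay $|\nabla u_0(x)|\leq C\delta(x)^{\sigma-1}$ and $|\nabla^2 u_0(x)|\leq C\delta(x)^{\sigma-2}$ (modulo lower-order data terms). Integrating in level sets of $\delta$ and using the $C^1$ regularity of $\partial\Omega$ then yields
\begin{equation*}
\|\nabla u_0\|_{L^2(\Omega\setminus\Sigma_{2\varepsilon})}+\varepsilon\|\nabla^2 u_0\|_{L^2(\Sigma_{2\varepsilon})}\leq C\varepsilon^{\sigma-\frac12}\|\text{data}\|,
\end{equation*}
which matches the claimed rate for $\theta_\varepsilon$. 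The $L^2$ bound for the velocity follows from the triangle inequality $\|u_\varepsilon-u_0\|_{L^2}\leq\|w_\varepsilon\|_{L^2}+\varepsilon\|\chi(\cdot/\varepsilon)S_\varepsilon(\psi_{2\varepsilon}\nabla u_0)\|_{L^2}$, combined with Poincaré on $w_\varepsilon$ (which vanishes on $\partial\Omega$), \eqref{pri:2.7} on the second term, and the trivial bound $\varepsilon\leq\varepsilon^{\sigma-1/2}$.

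The main obstacle is the co-layer estimate: in a $C^{1,\tau}$ domain the standard argument recovers the sharp rate $O(\varepsilon^{1/2})$, but under only $C^1$ boundary regularity $u_0$ need not be globally Lipschitz, and one inevitably pays the factor $\varepsilon^{1-\sigma}$. This is precisely what forces the restriction $\sigma<1$ in the statement. A secondary technical point is arranging $\psi_{2\varepsilon}$ and $S_\varepsilon$ so that $S_\varepsilon(\psi_{2\varepsilon}\cdot)$ has support inside $\Sigma_\varepsilon$, ensuring $w_\varepsilon$ genuinely vanishes on $\partial\Omega$ and keeping the Bogovskii correction benign.
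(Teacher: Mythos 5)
Your overall strategy -- two-scale expansion with a boundary cut-off $\psi_{2\varepsilon}$ and a smoothing $S_\varepsilon$, rewriting the residual in divergence form via the dual correctors of Lemma \ref{lemma:2.3}, fixing the divergence mismatch by a Bogovskii corrector, running the energy estimate to land on $\|\nabla u_0\|_{L^2(\Omega\setminus\Sigma_{c\varepsilon})}+\varepsilon\|\nabla^2 u_0\|_{L^2(\Sigma_{c\varepsilon})}$, and then invoking layer/co-layer bounds -- is exactly the route the paper takes; the paper simply cites the energy-estimate step from \cite{QX3} and relegates the layer/co-layer bounds to Lemma \ref{lemma:3.4}.

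There is, however, a genuine gap in your co-layer step. You claim that the interior Schauder estimate \eqref{pri:2.10} upgrades the global H\"older bound to a pointwise decay $|\nabla^2 u_0(x)|\leq C\,\delta(x)^{\sigma-2}$, modulo lower-order terms. Estimate \eqref{pri:2.10} only bounds the H\"older seminorm $[\nabla u_0]_{C^{0,\rho}(B_r)}$, not $\|\nabla^2 u_0\|_{L^\infty(B_r)}$, and in fact no such pointwise bound can hold: with $F$ merely in $L^p(\Omega)$, $p>d$, Calder\'on--Zygmund theory gives only $\nabla^2 u_0\in L^p_{\mathrm{loc}}$, so $\nabla^2 u_0$ is generally unbounded on compact subsets, and the claimed inequality is false however one interprets the ``lower-order data terms.'' The paper resolves this (in the proof of Lemma \ref{lemma:3.4}) by splitting $u_0=v+w$: $v$ solves the free-space Stokes problem with extended data $\tilde F,\tilde h$, so that $\|\nabla^2 v\|_{L^p(\mathbb{R}^d)}\leq C\{\|F\|_{L^p}+\|h\|_{W^{1,p}}\}$ by the whole-space singular-integral estimate \eqref{f:3.3}; while $w$ solves the homogeneous Stokes system with boundary data $g-v$, and only for $w$ does one obtain the pointwise interior decay $|\nabla w(x)|\lesssim\delta(x)^{\sigma-1}$, hence $|\nabla^2 w(x)|\lesssim\delta(x)^{\sigma-2}$, via interior estimates for the $F=h=0$ system. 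Combining $\|\nabla^2 w\|_{L^2(\Sigma_{p_2\varepsilon}\setminus\Sigma_{c_0})}\lesssim\varepsilon^{\sigma-3/2}$ with the uniform $L^2$ bound on $\nabla^2 v$ yields \eqref{pri:3.12}. Without this decomposition, the co-layer estimate does not close, and your argument as written is incomplete at precisely that point.
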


\begin{lemma}\label{lemma:3.4}
Assume the same conditions as in Theorem $\ref{thm:3.3}$. Let $(u_0,p_0)\in H^1(\Omega;\mathbb{R}^d)\times L^2(\Omega)$
be the weak solution to $(\text{D}_0)$. Then for any $\sigma\in(1/2,1)$, we have
\begin{equation}\label{pri:3.11}
\|\nabla u_0\|_{L^2(\Omega\setminus\Sigma_{p_1\varepsilon})}
\leq C\varepsilon^{\sigma-\frac{1}{2}}\Big\{
\|F\|_{L^p(\Omega)}+\|h\|_{W^{1,p}(\Omega)}+\|g\|_{C^{0,1}(\partial\Omega)}\Big\},
\end{equation}
and
\begin{equation}\label{pri:3.12}
\|\nabla^2u_0\|_{L^2(\Sigma_{p_2\varepsilon})} \leq C\varepsilon^{\sigma-\frac{3}{2}}
\Big\{\|F\|_{L^p(\Omega)}+\|h\|_{W^{1,p}(\Omega)}+\|g\|_{C^{0,1}(\partial\Omega)}\Big\},
\end{equation}
where $p_1,p_2>0$ are fixed real number, and $C$ depends on $\mu,d,p_1,p_2,\sigma,p$ and $\Omega$.
\end{lemma}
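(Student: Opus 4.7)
The plan is to exploit that $(u_0,p_0)$ solves a \emph{constant-coefficient} Stokes system, so classical interior Caccioppoli and $W^{2,2}$ estimates apply cleanly. Combined with the global H\"older control of $u_0$ from Theorem 2.2 (applied to $\mathcal L_0$), these produce distance-weighted averaged bounds for $|\nabla u_0|$ and $|\nabla^2 u_0|$ of the form ``negative power of $\delta(x)$'', which I then integrate over the boundary strip and over the interior respectively. Write $M_0:=\|F\|_{L^p(\Omega)}+\|h\|_{W^{1,p}(\Omega)}+\|g\|_{C^{0,1}(\partial\Omega)}$, fix $\sigma\in(1/2,1)$, and note that Theorem 2.2 (with $W^{1,p}(\Omega)\hookrightarrow L^{2p}(\Omega)$ since $p>d$) gives $\|u_0\|_{C^{0,\sigma}(\Omega)}\leq CM_0$ at the outset.

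For \eqref{pri:3.11}, I would fix $x\in\Omega$ with $\delta(x):=\mathrm{dist}(x,\partial\Omega)\leq p_1\varepsilon$, set $r:=\delta(x)/4$, and apply the interior Caccioppoli inequality for the constant-coefficient Stokes system on $B_{2r}(x)\Subset\Omega$ (using a Bogovskii-type corrector $w$ so that the test function $\eta^2(u_0-u_0(x))-w$ is divergence free and the pressure drops out). Combined with $\|u_0-u_0(x)\|_{L^\infty(B_{2r}(x))}\leq CM_0 r^\sigma$ and $\|h\|_{L^\infty(\Omega)}\leq CM_0$, this yields
\[
\dashint_{B_r(x)}|\nabla u_0|^2\,dy\;\leq\;CM_0^2\,\delta(x)^{2\sigma-2},
\]
the $F$ and $h$ contributions being lower order in $r$ since $p>d$. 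A Fubini-coarea argument (using uniform control on $|S_t|$ for small $t$, which is automatic because $\partial\Omega\in C^1$) then gives
\[
\int_{\Omega\setminus\Sigma_{p_1\varepsilon}}|\nabla u_0|^2\,dx\;\leq\;CM_0^2\int_0^{p_1\varepsilon}t^{2\sigma-2}\,dt\;\leq\;CM_0^2\,\varepsilon^{2\sigma-1},
\]
where $\sigma>1/2$ is precisely what makes the inner integral finite. Taking square roots proves \eqref{pri:3.11}.

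For \eqref{pri:3.12} the same template applies at one higher order. The interior $W^{2,2}$ estimate for the constant-coefficient Stokes system on $B_{2r}(x)$ (again subtracting the constant $u_0(x)$), combined with the H\"older bound, yields
\[
\dashint_{B_r(x)}|\nabla^2 u_0|^2\,dy\;\leq\;CM_0^2\,\delta(x)^{2\sigma-4}.
\]
Decomposing $\Sigma_{p_2\varepsilon}$ dyadically as $\bigcup_{j\geq 0}A_j$ with $A_j=\{x\in\Omega: p_2\varepsilon\,2^j\leq\delta(x)<p_2\varepsilon\,2^{j+1}\}$, covering each $A_j$ by $O((2^j\varepsilon)^{1-d})$ balls of radius $(2^j\varepsilon)/4$ and summing produces
\[
\|\nabla^2 u_0\|_{L^2(\Sigma_{p_2\varepsilon})}^2\;\leq\;CM_0^2\sum_{j=0}^{\infty}(2^j\varepsilon)^{2\sigma-3}\;\leq\;CM_0^2\,\varepsilon^{2\sigma-3},
\]
the geometric series converging because $2\sigma-3<-1$. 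Square-rooting yields \eqref{pri:3.12}.

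The principal technical obstacle will be to justify the averaged $|\nabla^2 u_0|$ bound cleanly, since at this data regularity $\nabla^2 u_0$ lies only in $L^p_{\mathrm{loc}}$ and has no pointwise meaning; the workaround is to state every local estimate in averaged $L^2$ form and sum via the Whitney-type dyadic cover rather than invoke a pointwise bound. A secondary but routine matter is the Fubini step in \eqref{pri:3.11}, which needs only uniform control on the surface measure of the level sets $\{\delta(\cdot)=t\}$ for small $t$ and is free from the assumption $\partial\Omega\in C^1$.
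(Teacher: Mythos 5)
Your proposal is correct, but it follows a route different from the paper's in two places. For estimate \eqref{pri:3.11} the paper uses the interior Schauder estimate \eqref{pri:2.10} to produce a genuine pointwise bound $|\nabla u_0(x)|\le C\delta(x)^{\sigma-1}$, which it then integrates directly via the coarea formula; you instead use a Caccioppoli inequality (with a Bogovskii corrector to kill the pressure) to get an averaged bound $\dashint_{B_{\delta(x)/4}}|\nabla u_0|^2\le C\delta(x)^{2\sigma-2}$, which then requires a Whitney/dyadic covering of the boundary strip rather than a straight coarea integral. For estimate \eqref{pri:3.12} the paper performs the decomposition $u_0=v+w$, $p_0=p_{0,1}+p_{0,2}$, where $v$ solves the whole-space problem with $\tilde F,\tilde h$ and is controlled by the global singular-integral $W^{2,p}$ estimate, while $w$ is a homogeneous Stokes solution and so admits the pointwise interior bound $|\nabla^2 w(x)|\lesssim \delta(x)^{-1}(\dashint_{B(x,\delta(x)/8)}|\nabla w|^2)^{1/2}$; you avoid this decomposition altogether and apply a local $W^{2,2}$ estimate to $u_0$ itself, summing via a dyadic cover of $\Sigma_{p_2\varepsilon}$. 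Your approach is more self-contained, but it forces you to verify carefully that the $F$- and $\nabla h$-contributions are genuinely lower order after summation (they contribute $O(M_0^2)$ to the square of the left side, which is dominated by $\varepsilon^{2\sigma-3}M_0^2$), and you must state every local estimate in averaged form since $\nabla^2 u_0$ has no pointwise meaning at this regularity --- a point you flag yourself. The paper's $v+w$ splitting buys a cleaner separation: the singular-integral bound disposes of the inhomogeneous part in one global stroke, and only the smooth homogeneous part $w$ carries the $\delta(x)^{-(2-\sigma)}$ behavior that must be integrated. One small slip: the displayed averaged bound $\dashint_{B_r(x)}|\nabla^2 u_0|^2\lesssim M_0^2\delta(x)^{2\sigma-4}$ is only correct after one checks that the $F$-term $\dashint_{B_{2r}}|F|^2\lesssim r^{-2d/p}\|F\|_{L^p}^2$ is absorbed, which holds here because $2\sigma-4+2d/p<0$; it would have been cleaner to carry the inhomogeneous terms explicitly through the summation.
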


\begin{proof}
We first handle the estimate $\eqref{pri:3.11}$. It is convenient to assume
$\|F\|_{L^p(\Omega)}+\|h\|_{W^{1,p}(\Omega)}+\|g\|_{C^{0,1}(\partial\Omega)} =1$.
For any $x\in\Omega$, let $r= \delta(x)$.
It follows from the interior Schauder estimates $\eqref{pri:2.10}$ that
\begin{equation}\label{f:3.2}
\begin{aligned}
 |\nabla u_0(x)|
 &\leq C\bigg\{\frac{1}{r}\Big(\dashint_{B(x,r)}\big|u_0(y) - \dashint_{B(x,r)}u_0\big|^2dy\Big)^{\frac{1}{2}} +r\Big(\dashint_{B(x,r)}\big|F\big|^pdy\Big)^{\frac{1}{p}}\\
 &\qquad+r\Big(\dashint_{B(x,r)}\big|\nabla h\big|^pdy\Big)^{\frac{1}{p}}
 +\|h\|_{L^\infty(B(x,r))}\bigg\}\\
 &\leq Cr^{\sigma-1}\big[u_0\big]_{C^{0,\sigma}(\Omega)}
 + C\Big\{\|F\|_{L^p(\Omega)} + \|h\|_{W^{1,p}(\Omega)}\Big\}\leq Cr^{\sigma-1},
\end{aligned}
\end{equation}
where the global H\"older estimate $\eqref{pri:2.16}$ was used in the last inequality. We mention that the range of the H\"older exponent in $\eqref{pri:2.16}$ is $(0,1)$. Then it follows from the co-area formula that
\begin{equation*}
\big\|\nabla u_0\big\|^2_{L^2(\Omega\setminus\Sigma_{p_1\varepsilon})}
= \int_0^{p_1\varepsilon}\int_{S_t} |\nabla u_0(y)|^2 dS_t(y)dt
\leq C\int_0^{p_1\varepsilon}\frac{dt}{t^{2\sigma-2}}
\leq C\varepsilon^{2\sigma-1},
\end{equation*}
whenever $\sigma\in(1/2,1)$. And hence this directly leads to $\eqref{pri:3.11}$.

Now we turn to show the estimate $\eqref{pri:3.12}$.
Proceeding as in the proof of \cite[Lemma 3.5]{QX3},
we let $u_0 = v+w$ and $p_0=p_{0,1}+p_{0,2}$,
and they satisfy
\begin{equation*}
\text{(i)}\left\{\begin{aligned}
\mathcal{L}_0(v) + \nabla p_{0,1} & = \tilde{F} &\quad& \text{in}~~\mathbb{R}^d,\\
\text{div}(v) &= \tilde{h} &\quad& \text{in}~~\mathbb{R}^d,
\end{aligned}\right.
\qquad \text{and} \qquad
\text{(ii)}\left\{\begin{aligned}
\mathcal{L}_0(w) + \nabla p_{0,2} & = 0 &\quad& \text{in}~~\Omega,\\
\text{div}(w) &= 0 &\quad& \text{in}~~\Omega,\\
w &= g -v &\quad& \text{on}~\partial\Omega,
\end{aligned}\right.
\end{equation*}
where $\tilde{F}$ is the extension of $F$ to $\mathbb{R}^d$ by 0 outside of $\Omega$, and $\tilde{h}$ is the $W^{1,q}$-extension of $h$ to $\mathbb{R}^d$ such
that $\tilde{h} = h$ a.e. in $\Omega$ and $\|\tilde{h}\|_{W^{1,q}(\mathbb{R}^d)}\leq C\|h\|_{W^{1,q}(\Omega)}$.
Due to the singular integral estimates (see \cite[Lemma 3.4]{QX3}) for Stokes system with constant coefficients, we have
\begin{equation}\label{f:3.3}
\|\nabla^2 v\|_{L^{p}(\mathbb{R}^d)} +
\|\nabla v\|_{L^{p}(\mathbb{R}^d)}
+\|p_{0,1}\|_{W^{1,p}(\mathbb{R}^d)}
\leq C\Big\{\|F\|_{L^p(\Omega)}+\|h\|_{W^{1,p}(\Omega)}\Big\}.
\end{equation}
To deal with the system (ii), as in the proof $\eqref{f:3.2}$, we first obtain
\begin{equation}\label{f:3.4}
\begin{aligned}
|\nabla w(x)|
&\leq \frac{C}{r}\Big(\dashint_{B(x,r/2)}|w(y)-\dashint_{B(x,r/2)}w|^2dx\Big)^{1/2}
\leq Cr^{\sigma-1}[w]_{C^{0,\sigma}(\Omega)} \\
&\leq Cr^{\sigma-1}\Big\{\|g\|_{C^{0,1}(\partial\Omega)}
+\|v\|_{C^{0,1}(\partial\Omega)}\Big\}
\leq Cr^{\sigma-1}\Big\{\|g\|_{C^{0,1}(\partial\Omega)}
+\|\nabla v\|_{W^{1,p}(\mathbb{R}^d)}\Big\}
\leq Cr^{\sigma-1},
\end{aligned}
\end{equation}
where $p>d$, the fourth inequality above is a result of Sobolev imbedding theorem, and the last one follows from estimate $\eqref{f:3.3}$.
Also, it is clear to derive
the following interior estimate
\begin{equation*}
|\nabla^2 w(x)| \leq \frac{C}{r}
\Big(\dashint_{B(x,r/8)}|\nabla w|^2 dy\Big)^{\frac{1}{2}}.
\end{equation*}
Then we arrive at
\begin{equation*}
\begin{aligned}
\int_{\Sigma_{p_2\varepsilon}\setminus\Sigma_{c_0}}|\nabla^2 w|^2 dx
\leq C\int_{\Sigma_{p_2\varepsilon}\setminus\Sigma_{c_0}}\dashint_{B(x,\delta(x)/8)}
\frac{|\nabla w(y)|^2}{[\delta(x)]^2} dy dx
\leq C\int_{p_2\varepsilon}^\infty\frac{dt}{t^{4-2\sigma}}
\leq C\varepsilon^{2\sigma-3},
\end{aligned}
\end{equation*}
where we use the observation that $\delta(y)\approx \delta(x)$,
and the estimate $\eqref{f:3.4}$. This together with the estimates
$\|\nabla^2 v\|_{L^2(\Sigma_{p_2\varepsilon})}\leq C$ and
$\|\nabla^2 w\|_{L^2(\Sigma_{c_0})}\leq C$ gives
\begin{equation*}
\|\nabla^2u_0\|_{L^2(\Sigma_{p_2\varepsilon})}
\leq C\varepsilon^{\sigma-\frac{3}{2}}
\end{equation*}
and the desired estimate $\eqref{pri:3.12}$ follows, and we have completed the proof.
\end{proof}

\begin{flushleft}
\textbf{Proof of Theorem \ref{thm:3.3}.}
The following estimate can be derived by using the properties of smoothing operator and dual correctors as we stated in Section 2, details can be found in \cite[Lemma 3.2]{QX3}, for example.
\end{flushleft}
\begin{equation*}
\begin{aligned}
\|u_\varepsilon-u_0\|_{L^2(\Omega)} +
\|p_\varepsilon-p_0
& -\pi(\cdot/\varepsilon)S_\varepsilon(\psi_{2\varepsilon}
\nabla u_0)\|_{L^2(\Omega)/\mathbb{R}}\\
&\leq C\Big\{\|\nabla u_0\|_{L^2(\Omega\setminus4\varepsilon)}
+\varepsilon\|\nabla^2 u_0 \|_{L^{2}(\Sigma_{2\varepsilon})}\Big\},
\end{aligned}
\end{equation*}
Substituting estimates $\eqref{pri:3.11}$ and $\eqref{pri:3.12}$ into the right hand side of the above estimate leads to
the desired estimate $\eqref{pri:3.13}$, and the proof is complete.
\qed

\begin{corollary}
Assume the same conditions as in Theorem $\ref{thm:3.3}$. If we additionally assume that
the corrector $\pi$ is bounded, then we have
\begin{equation}\label{pri:3.14}
\|p_\varepsilon-p_0
 -\pi(\cdot/\varepsilon)\psi_{2\varepsilon}
\nabla u_0\|_{L^2(\Omega)/\mathbb{R}}
\leq C\varepsilon^{\sigma-\frac{1}{2}}\Big\{\|F\|_{L^p(\Omega)}
+ \|h\|_{W^{1,p}(\Omega)}
+\|g\|_{C^{0,1}(\partial\Omega)}\Big\},
\end{equation}
where $\sigma\in(1/2,1)$ is given in Theorem $\ref{thm:3.3}$, and $C$ depends on
$\mu,d,\sigma,p,\eta,\tau, M$ and $\|\pi\|_{L^\infty(Y)}$.
\end{corollary}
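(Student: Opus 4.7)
The plan is to deduce the corollary directly from Theorem~\ref{thm:3.3} by controlling the error introduced when the smoothing operator $S_\varepsilon$ is stripped from the pressure corrector. Using $\|\cdot\|_{L^2(\Omega)/\mathbb{R}}\leq\|\cdot\|_{L^2(\Omega)}$ together with the triangle inequality,
\begin{equation*}
\|p_\varepsilon - p_0 - \pi(\cdot/\varepsilon)\psi_{2\varepsilon}\nabla u_0\|_{L^2(\Omega)/\mathbb{R}} \leq \|p_\varepsilon - p_0 - \pi(\cdot/\varepsilon)S_\varepsilon(\psi_{2\varepsilon}\nabla u_0)\|_{L^2(\Omega)/\mathbb{R}} + \bigl\|\pi(\cdot/\varepsilon)\bigl[S_\varepsilon(\psi_{2\varepsilon}\nabla u_0) - \psi_{2\varepsilon}\nabla u_0\bigr]\bigr\|_{L^2(\Omega)},
\end{equation*}
and Theorem~\ref{thm:3.3} already supplies the desired $O(\varepsilon^{\sigma-1/2})$ bound on the first summand. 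The whole task is therefore to show that the second summand obeys the same bound.

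This is where the hypothesis $\pi\in L^\infty(Y)$ enters: it allows me to factor out $\|\pi\|_{L^\infty(Y)}$ pointwise, reducing the quantity to $\|S_\varepsilon(\psi_{2\varepsilon}\nabla u_0) - \psi_{2\varepsilon}\nabla u_0\|_{L^2(\Omega)}$. I then invoke the approximation estimate~\eqref{pri:2.8} with $p=2$, applied to $f=\psi_{2\varepsilon}\nabla u_0$ extended by zero outside $\Omega$, to obtain
\begin{equation*}
\|S_\varepsilon(\psi_{2\varepsilon}\nabla u_0) - \psi_{2\varepsilon}\nabla u_0\|_{L^2(\mathbb{R}^d)} \leq C\varepsilon \|\nabla(\psi_{2\varepsilon}\nabla u_0)\|_{L^2(\mathbb{R}^d)}.
\end{equation*}
It remains to verify $\|\nabla(\psi_{2\varepsilon}\nabla u_0)\|_{L^2} \leq C\varepsilon^{\sigma-3/2}\{\text{data}\}$.

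Expanding by the Leibniz rule produces exactly the two quantities estimated by Lemma~\ref{lemma:3.4}. The first, $(\nabla\psi_{2\varepsilon})\otimes \nabla u_0$, is supported in the thin shell $\Sigma_{2\varepsilon}\setminus\Sigma_{4\varepsilon}\subset \Omega\setminus\Sigma_{4\varepsilon}$ where $|\nabla\psi_{2\varepsilon}|\leq C/\varepsilon$; combining this pointwise bound with the layer estimate~\eqref{pri:3.11} (taking $p_1=4$) yields a contribution of size $C\varepsilon^{-1}\cdot \varepsilon^{\sigma-1/2} = C\varepsilon^{\sigma-3/2}\{\text{data}\}$. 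The second piece, $\psi_{2\varepsilon}\nabla^2 u_0$, is supported in $\Sigma_{2\varepsilon}$ and is handled directly by the co-layer estimate~\eqref{pri:3.12} (taking $p_2=2$), which again delivers $C\varepsilon^{\sigma-3/2}\{\text{data}\}$. Multiplying the sum by the prefactor $\varepsilon$ produces the required $O(\varepsilon^{\sigma-1/2})$ rate, and combining with Theorem~\ref{thm:3.3} closes the proof. No real obstacle appears in the argument: the essential point is that the $L^\infty$ bound on $\pi$ substitutes for the $L^2_{per}$ scaling in~\eqref{pri:2.7} that was used to absorb the smoothing operator in the proof of Theorem~\ref{thm:3.3}.
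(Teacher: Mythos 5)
Your proposal is correct and matches the paper's (one-line) proof, which simply cites Theorem~\ref{thm:3.3}, Lemma~\ref{lemma:2.4}, and Lemma~\ref{lemma:3.4}; you have filled in exactly the intended details: strip $S_\varepsilon$ via the triangle inequality, absorb $\pi$ by its $L^\infty$ bound, apply \eqref{pri:2.8}, and split $\nabla(\psi_{2\varepsilon}\nabla u_0)$ by Leibniz into pieces controlled by the layer estimate \eqref{pri:3.11} and co-layer estimate \eqref{pri:3.12}. The bookkeeping of cut-off supports ($\nabla\psi_{2\varepsilon}$ lives in $\Sigma_{2\varepsilon}\setminus\Sigma_{4\varepsilon}$, $\psi_{2\varepsilon}$ in $\Sigma_{2\varepsilon}$) and exponents ($\varepsilon\cdot\varepsilon^{\sigma-3/2}=\varepsilon^{\sigma-1/2}$) is all accurate.
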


\begin{proof}
The result follows from Theorem $\ref{thm:3.3}$, Lemma $\ref{lemma:2.4}$ and Lemma $\ref{lemma:3.4}$.
\end{proof}

\begin{lemma}\label{lemma:3.5}
Let $\Omega$ be a bounded $C^{1,\tau}$ domain.
Given $F\in L^p(D_2;\mathbb{R}^d)$ and $h\in W^{1,p}(D_2)$ with $p>d$, and
$g\in C^{1,\eta}(\Delta_2)$ with $g(0) = 0$ and $\eta\in(0,\tau)$, let
$(u_0,p_0)\in H^{1}(\Omega;\mathbb{R}^d)\times L^{2}(\Omega)$ be the solution of
$\mathcal{L}_0(u_0) + \nabla p_0 = F$ and
$\emph{div}(u_0) = h$ in $D_{2}$ with
$u_0 = u_\varepsilon$ on $\partial D_2$, where $(u_\varepsilon,p_\varepsilon)$ satisfies
$\mathcal{L}_\varepsilon(u_\varepsilon) + \nabla p_\varepsilon = F$ and
$\emph{div}(u_\varepsilon)= h$ in $D_2$, and
$u_\varepsilon = g$ on $\Delta_2$.
Then for any $5\varepsilon\leq r< (1/4)$, there holds
\begin{equation}\label{pri:3.15}
\begin{aligned}
\bigg(\frac{1}{|D_r|}\int_{D_r\setminus\Sigma_{4\varepsilon}} |\nabla u_0|^2  dx\bigg)^{\frac{1}{2}}
&\leq C\left(\frac{\varepsilon}{r}\right)^{\frac{1}{2}}\bigg\{
\Big(\dashint_{D_{1}}|u_0|^2 dx\Big)^{\frac{1}{2}} + \Big(\dashint_{D_{1}}|F|^p dx\Big)^{\frac{1}{p}}\\
&+\Big(\dashint_{D_{1}}|\nabla h|^p dx\Big)^{\frac{1}{p}}
+\|h\|_{L^\infty(D_1)}
+\|\nabla g\|_{C^{0,\eta}(\Delta_{1})}\bigg\},
\end{aligned}
\end{equation}
and
\begin{equation}\label{pri:3.16}
\begin{aligned}
\bigg(\frac{1}{|D_r|}\int_{D_r\cap\Sigma_{4\varepsilon}}|\nabla^2 u_0|^2 dx\bigg)^{\frac{1}{2}}
&\leq C\left(\frac{1}{\varepsilon r}\right)^{\frac{1}{2}}\bigg\{
\Big(\dashint_{D_{2}}|u_0|^2 dx\Big)^{\frac{1}{2}}+ \Big(\dashint_{D_{2}}|F|^p dx\Big)^{\frac{1}{p}} \\
&+\Big(\dashint_{D_{2}}|\nabla h|^p dx\Big)^{\frac{1}{p}}
+\|h\|_{L^\infty(D_2)}
+\|\nabla g\|_{C^{0,\eta}(\Delta_{2})}\bigg\},
\end{aligned}
\end{equation}
where $C$ depends on $\mu,d,\eta,\tau,p$ and $M$.
\end{lemma}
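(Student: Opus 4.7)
My plan is to treat $\eqref{pri:3.15}$ and $\eqref{pri:3.16}$ in parallel, extracting both from a single input: a uniform $L^\infty$ bound for $\nabla u_0$ on $D_{1/2}$, which then feeds two geometric computations over the boundary layer $D_r\setminus\Sigma_{4\varepsilon}$ and its complement. Writing $\mathcal{R}$ for the (essentially common) right-hand side of $\eqref{pri:3.15}$–$\eqref{pri:3.16}$, the initial reduction is to apply the boundary Lipschitz estimate $\eqref{pri:2.17}$ to the homogenized system $\mathcal{L}_0(u_0)+\nabla p_0=F$, $\mathrm{div}(u_0)=h$, using that $u_0=g$ on $\Delta_2$ and that the constant matrix $\widehat{A}$ trivially satisfies $\eqref{a:1}$–$\eqref{a:3}$. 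Choosing $R=1/2$ for $\eqref{pri:3.15}$ and $R=1$ for $\eqref{pri:3.16}$ gives
\begin{equation*}
\|\nabla u_0\|_{L^\infty(D_{1/2})}\le C\mathcal{R}.
\end{equation*}

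For $\eqref{pri:3.15}$ the rest of the argument is purely geometric: since $\partial\Omega$ is $C^{1,\tau}$, the set $D_r\setminus\Sigma_{4\varepsilon}$ is a collar of thickness $\le C\varepsilon$ adjacent to $\Delta_r$, so $|D_r\setminus\Sigma_{4\varepsilon}|\le Cr^{d-1}\varepsilon$. Combining with the Lipschitz bound and dividing by $|D_r|\approx r^d$ immediately produces the asserted factor $(\varepsilon/r)^{1/2}$.

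The co-layer estimate $\eqref{pri:3.16}$ is the technical heart of the lemma. Here I would mirror the splitting $u_0=v+w$ used in the proof of Lemma~$\ref{lemma:3.4}$, with $v$ solving the Stokes system on $\mathbb{R}^d$ with extensions $\widetilde{F},\widetilde{h}$ (so $\|v\|_{W^{2,p}(\mathbb{R}^d)}\le C\mathcal{R}$ by Calder\'on--Zygmund), and $w=u_0-v$ satisfying a homogeneous constant-coefficient Stokes system in $D_2$. Because $\mathcal{L}_0$ has constant coefficients, $\nabla w$ itself solves a homogeneous Stokes system, so the Caccioppoli inequality combined with the constant-coefficient mean-value property yields
\begin{equation*}
|\nabla^2 w(x)|^2\le \frac{C}{\delta(x)^2}\dashint_{B(x,\delta(x)/8)}|\nabla w|^2\,dy,\qquad x\in\Sigma_{4\varepsilon}.
\end{equation*}
Since $p>d$, Sobolev embedding forces $\|\nabla v\|_{L^\infty(\mathbb{R}^d)}\le C\mathcal{R}$, hence $\|\nabla w\|_{L^\infty(D_{1/2})}\le C\mathcal{R}$. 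A co-area/Fubini computation on $D_r\cap\Sigma_{4\varepsilon}$, using $\delta(y)\approx\delta(x)$ on $B(x,\delta(x)/8)$ and $|S_t\cap D_r|\le Cr^{d-1}$, then gives
\begin{equation*}
\int_{D_r\cap\Sigma_{4\varepsilon}}|\nabla^2 w|^2\,dx
\le C\mathcal{R}^2 r^{d-1}\int_{4\varepsilon}^{Cr}\frac{dt}{t^2}
\le C\mathcal{R}^2\,\frac{r^{d-1}}{\varepsilon},
\end{equation*}
while the $v$-contribution is absorbed by H\"older together with the standing hypothesis $r\ge 5\varepsilon$:
\begin{equation*}
\frac{1}{|D_r|}\int_{D_r}|\nabla^2 v|^2\,dx\le Cr^{-2d/p}\mathcal{R}^2\le Cr^{-2}\mathcal{R}^2\le \frac{C\mathcal{R}^2}{\varepsilon r}.
\end{equation*}
Dividing the sum by $|D_r|\approx r^d$ and taking square roots delivers $\eqref{pri:3.16}$.

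The main obstacle is the pointwise second-derivative bound for $w$: one cannot simply differentiate the equation for $u_0$ because the inhomogeneous terms $F,h$ live only in $L^p$ and $W^{1,p}$ and would obstruct a clean Caccioppoli iteration at the second order. The splitting isolates the genuinely homogeneous piece $w$, on which constant-coefficient theory applies without any loss of regularity, while the data dependence is shoved into a global $W^{2,p}$ bound on $v$ that is absorbed into the required factor $(\varepsilon r)^{-1/2}$ precisely because of the lower bound $r\ge 5\varepsilon$.
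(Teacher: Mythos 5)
Your proposal is correct and follows essentially the same route as the paper: apply the boundary Lipschitz estimate \eqref{pri:2.17} to $u_0$ for \eqref{pri:3.15} combined with the collar-volume bound, and for \eqref{pri:3.16} use the same $v+w$ decomposition as in Lemma~\ref{lemma:3.4} together with an interior second-derivative estimate and co-area integration. The only (innocuous) deviation is that you obtain $\|\nabla w\|_{L^\infty(D_{1/2})}\le C\mathcal{R}$ by the triangle inequality from the bounds on $\nabla u_0$ and $\nabla v$, whereas the paper applies \eqref{pri:2.17} directly to the system for $w$; both yield the same estimate.
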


\begin{proof}
The argument is quite similar to the one used in Lemma $\ref{lemma:3.4}$,
and we will simply sketch the proof here.
First, it is clear to see that the estimate $\eqref{pri:3.15}$ could be derived
from
\begin{equation*}
\|\nabla u_0\|_{L^\infty(D_{1/2})}
\leq C\bigg\{\Big(\dashint_{D_{1}}|u_0|^2 dx\Big)^{\frac{1}{2}}
+\Big(\dashint_{D_{1}}|F|^p dx\Big)^{\frac{1}{p}}
+\|h\|_{W^{1,p}(D_{1})}
+\|\nabla g\|_{C^{0,\eta}(\Delta_1)}\bigg\},
\end{equation*}
which follows from the estimate $\eqref{pri:2.17}$.
Once again, we let $u_0 = v+w$ and $p_0 = p_{0,1}+p_{0,2}$, which satisfy
\begin{equation*}
\text{(i)}\left\{\begin{aligned}
\mathcal{L}_0(v) + \nabla p_{0,1} & = \tilde{F} &~& \text{in}~\mathbb{R}^d,\\
\text{div}(v) &= \tilde{h} &~& \text{in}~\mathbb{R}^d,
\end{aligned}\right.
\quad \text{and} \quad
\text{(ii)}\left\{\begin{aligned}
\mathcal{L}_0(w) + \nabla p_{0,2} & = 0 &~& \text{in}~~D_2,\\
\text{div}(w) &= 0 &~& \text{in}~~D_2,\\
w &= u_\varepsilon - v &~& \text{on}~\partial D_2.
\end{aligned}\right.
\end{equation*}
Here $\tilde{F}$ and $\tilde{h}$ are the same extensions of $F$ and $h$ respectively as in the proof of Lemma $\ref{lemma:3.4}$. Also
it follows from the local boundary estimate $\eqref{pri:2.17}$ that
\begin{equation}
\begin{aligned}
\|\nabla w\|_{L^\infty(D_{1/2})}
&\leq C\Big\{\|w\|_{L^2(D_{1})} + \|\nabla g\|_{C^{0,\eta}(\Delta_1)}
+ \|\nabla v\|_{C^{0,\eta}(\Delta_1)}\Big\}\\
&\leq C\Big\{\|u_0\|_{L^2(D_{1})} + \|\nabla g\|_{C^{0,\eta}(\Delta_1)}
+ \|F\|_{L^p(D_2)}
+ \|h\|_{W^{1,p}(D_2)}\Big\} =:C_D,
\end{aligned}
\end{equation}
where we use the Sobolev imbedding theorem
and the estimate $\eqref{f:3.3}$ in the last step, as well as the fact that
$w=u_0-v$ in $D_1$ and $v-v(0)$ still satisfies (i). Then we have
\begin{equation*}
\begin{aligned}
\int_{\Sigma_{4\varepsilon}\cap D_r}|\nabla^2 w|^2 dx
\leq C\int_{\Sigma_{4\varepsilon}\cap D_r}\dashint_{B(x,\delta(x)/8)}
\frac{|\nabla w(y)|^2}{[\delta(x)]^2} dy dx
\leq CC_Dr^{d-1}\int_{4\varepsilon}^r\frac{dt}{t^{2}}
\leq \frac{CC_Dr^{d-1}}{\varepsilon},
\end{aligned}
\end{equation*}
and this implies the estimate
\begin{equation}\label{f:3.5}
\begin{aligned}
\bigg(\frac{1}{|D_r|}\int_{D_r\cap\Sigma_{4\varepsilon}}|\nabla^2 w|^2 dx\bigg)^{\frac{1}{2}}
&\leq C\left(\frac{1}{\varepsilon r}\right)^{\frac{1}{2}}\bigg\{
\Big(\dashint_{D_{1}}|u_0|^2 dx\Big)^{\frac{1}{2}}+ \Big(\dashint_{D_{2}}|F|^p dx\Big)^{\frac{1}{p}} \\
&+\Big(\dashint_{D_{2}}|\nabla h|^p dx\Big)^{\frac{1}{p}}
+\|h\|_{L^\infty(D_2)}
+\|\nabla g\|_{C^{0,\eta}(\Delta_{2})}\bigg\}.
\end{aligned}
\end{equation}
Moreover, using the estimate $\eqref{f:3.3}$ again, we derive
\begin{equation*}
\begin{aligned}
\bigg(\frac{1}{|D_r|}\int_{D_r\cap\Sigma_{4\varepsilon}}|\nabla^2 v|^2 dx\bigg)^{\frac{1}{2}}
\leq \Big(\dashint_{D_r}|\nabla^2 v|^p dx\Big)^{\frac{1}{p}}
\leq Cr^{-\frac{d}{p}}\bigg\{
\Big(\dashint_{D_{2}}|F|^p dx\Big)^{\frac{1}{p}}
+\|h\|_{W^{1,p}(D_2)}\bigg\}.
\end{aligned}
\end{equation*}
By noting that $p>d$ and $\varepsilon<r<1$, this together with $\eqref{f:3.5}$ leads to the desired estimate
$\eqref{pri:3.16}$, and we have completed the proof.
\end{proof}

\section{Lipschitz estimates on velocity term}
We will use this section to provide the boundary Lipschitz estimate of the velocity, which is stated in the following theorem.
\begin{thm}\label{thm:5.2}
Let $\Omega$ be a bounded $C^{1,\tau}$ domain.
Suppose $A$ satisfies $\eqref{a:1}-\eqref{a:3}$. Let $(u_\varepsilon, p_\varepsilon)\in H^1(D_5;\mathbb{R}^d)
\times L^2(D_5)$ be a weak solution of
$\mathcal{L}_\varepsilon(u_\varepsilon) + \nabla p_\varepsilon = F$
and $\emph{div}(u_\varepsilon) = h$ in
$D_5$ and $u_\varepsilon = g$ on $\Delta_5$,
where $F\in L^p(D_5;\mathbb{R}^d)$ and $h\in W^{1,p}(D_5)$ with
$p>d$, and $g\in C^{1,\eta}(\Delta_5;\mathbb{R}^d)$ with $g(0) = 0$,
and $0<\eta<\min\{\tau,1-d/p\}$.
Then there holds
\begin{equation}\label{pri:5.9}
\begin{aligned}
\Big(\dashint_{D_r} |\nabla u_\varepsilon|^2dx\Big)^{\frac{1}{2}}
&\leq C\bigg\{\Big(\dashint_{D_1} |u_\varepsilon|^2dx\Big)^{\frac{1}{2}}+ \Big(\dashint_{D_1} |F|^pdx\Big)^{\frac{1}{p}} \\
&+ \Big(\dashint_{D_1} |\nabla h|^pdx\Big)^{\frac{1}{p}}
+ \|h\|_{L^\infty(D_1)}
+ \|\nabla g\|_{C^{0,\eta}(\Delta_1)}\bigg\}
\end{aligned}
\end{equation}
for any $0<r<(1/4)$,
where $C$ depends only on $\mu,\omega,d,M,\sigma$ and $p$.
\end{thm}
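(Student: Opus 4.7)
The approach is to combine a Campanato iteration in the spirit of Armstrong--Shen \cite{SZ} and Shen \cite{SZW12} with the convergence-rate estimate of Theorem \ref{thm:3.3} in place of the Rellich-type identity used in \cite{SZW12}; this is precisely what circumvents the symmetry assumption on $A$. The range $0 < r < 1/4$ is treated in two pieces: $\varepsilon \le r < 1/4$ by the iteration, and $0 < r < \varepsilon$ by a blow-up reducing to the classical boundary Schauder estimate \eqref{pri:2.17}.

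The heart of the argument is a scaling-invariant one-step approximation. Fix $\sigma \in (3/4, 1)$ and set $\lambda = 2\sigma - 3/2 \in (0, 1/2)$. For each $\varepsilon \le r < 1/4$, I would construct $(u_0, p_0)$ solving $\mathcal{L}_0(u_0) + \nabla p_0 = F$ and $\text{div}(u_0) = h$ in $D_{2r}$ with $u_0 = u_\varepsilon$ on $\partial D_{2r}$, and prove
\begin{equation*}
\Big(\dashint_{D_r}|u_\varepsilon - u_0|^2\, dx\Big)^{1/2} \le C\Big(\frac{\varepsilon}{r}\Big)^{\lambda}\mathcal{J}(r),
\end{equation*}
where $\mathcal{J}(r)$ collects the scale-invariant versions of the data norms on the right-hand side of \eqref{pri:5.9}. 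The rescaling $y = x/r$ converts Theorem \ref{thm:3.3} on $D_{2r}$ into its $\tilde\varepsilon = \varepsilon/r$ version on $D_2$, yielding a factor $(\varepsilon/r)^{\sigma - 1/2}$ multiplying the scale-invariant boundary trace $r\|\nabla u_\varepsilon\|_{L^\infty(\partial D_{2r}\setminus \Delta_{2r})}$. Since the latter cannot be controlled uniformly, I would bound it nonuniformly as suggested by \eqref{f:0.4}: the uniform interior Lipschitz estimate of \cite{SGZWS} applied at interior scale $\delta(x)$, combined with the uniform global H\"older bound from the same paper, produces a pointwise control of the form $|\nabla u_\varepsilon(x)| \lesssim [\delta(x)]^{\sigma - 1}$ (up to data), which in rescaled variables amounts to the second factor $(\varepsilon/r)^{\sigma - 1}$. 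The product $(\varepsilon/r)^{\sigma - 1/2}\cdot (\varepsilon/r)^{\sigma - 1}$ is exactly $(\varepsilon/r)^{\lambda}$, positive only for $\sigma > 3/4$.

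Once the approximation is in place, the homogenized $u_0$ enjoys the boundary Schauder estimate \eqref{pri:2.17} on $D_r$, so for any $\theta \in (0, 1/4)$ and any $\rho < \min\{\eta, 1 - d/p\}$ we obtain an iteration inequality of Shen type comparing the scales $\theta r$ and $r$; choosing $\theta$ so that $C\theta^\rho \le 1/2$ and invoking Lemma \ref{lemma:5.3} produces the desired uniform bound on $(\dashint_{D_r}|\nabla u_\varepsilon|^2)^{1/2}$ for all $\varepsilon \le r < 1/4$. For the remaining regime $0 < r < \varepsilon$, the blow-up $w(y) = \varepsilon^{-1}u_\varepsilon(\varepsilon y)$ satisfies a Stokes system with non-oscillating $C^{0,\tau}$ coefficients $A(y)$ on a $C^{1,\tau}$ half-ball of uniformly controlled constants, and the classical estimate \eqref{pri:2.17} together with the scale-$\varepsilon$ bound just proved extends \eqref{pri:5.9} down to $r \to 0$. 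The main obstacle is the approximation step itself: both the $(\varepsilon/r)^{\sigma - 1/2}$ factor from the rescaled Theorem \ref{thm:3.3} (which relies on the layer/co-layer bounds of Lemma \ref{lemma:3.4} applied at every scale) and the nonuniform boundary factor $(\varepsilon/r)^{\sigma - 1}$ must be controlled uniformly in $r$, and their product has a positive power of $\varepsilon/r$ only for $\sigma > 3/4$, which is the algebraic constraint behind the restriction $\lambda \in (0, 1/2)$.
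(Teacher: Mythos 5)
Your proposal is correct and follows essentially the same route as the paper: the approximation $\|u_\varepsilon - u_0\|_{L^2(D_r)} \lesssim (\varepsilon/r)^{\lambda}\mathcal{J}(r)$ with $\lambda = 2\sigma - 3/2$ obtained by coupling Theorem~\ref{thm:3.3} with the nonuniform bound on $\|\nabla u_\varepsilon\|_{L^\infty(\partial D_{2r}\setminus\Delta_{2r})}$ (this is Lemma~\ref{lemma:5.4}), the Schauder decay for the homogenized solution (Lemma~\ref{lemma:5.5}), the one-step Campanato-type inequality (Lemma~\ref{lemma:5.6}), Shen's iteration lemma (Lemma~\ref{lemma:5.3}), and a blow-up for $0<r<\varepsilon$. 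The algebraic constraint $\sigma>3/4$ behind $\lambda\in(0,1/2)$ is correctly identified.
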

Before we give the proof, we recall the following uniform boundary H\"older estimate of the velocity, obtained by compactness argument in \cite[Theorem 6.1]{SGZWS}.
\begin{thm}[Boundary H\"older estimates]
Let $\Omega$ be a bounded $C^1$ domain. Suppose $A$ satisfies $\eqref{a:1}$ and $\eqref{a:2}$. Let
$(u_\varepsilon,p_\varepsilon)\in H^1(D_5;\mathbb{R}^d)\times L^2(D_5)$ be the weak solution to
$\mathcal{L}_\varepsilon(u_\varepsilon) + \nabla p_\varepsilon = F$
and $\emph{div}(u_\varepsilon)=h$ in $D_5$, and $u_\varepsilon =g$ on $\Delta_4$, where
$F\in L^p(D_5;\mathbb{R}^d)$, $h\in W^{1,p}(D_5)$ with $p>d$, and $g\in C^{0,1}(\Delta_5;\mathbb{R}^d)$
with $g(0) = 0$. Then for any $0<\sigma<1$, there holds
\begin{equation}\label{pri:4.6}
\Big(\dashint_{D_r}|u_\varepsilon|^2dx\Big)^{\frac{1}{2}}
\leq Cr^\sigma\bigg\{
\Big(\dashint_{D_1}|u_\varepsilon|^2dx\Big)^{\frac{1}{2}}
+\|h\|_{W^{1,p}(D_1)} + \|\nabla g\|_{C^{0,\eta}(\Delta_1)}\bigg\}
\end{equation}
for any $\varepsilon\leq r<(1/4)$, where $C$ depends only on $\mu,p,d,\sigma,M$.
\end{thm}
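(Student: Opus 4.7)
The plan is to adapt the compactness (three-step) method of Avellaneda--Lin to the Stokes setting, with additional care for the pressure. I flatten the boundary implicitly via the $C^{1,\tau}$ graph $\psi$ defining $D_r$, $\Delta_r$, and abbreviate $\Phi := \|F\|_{L^p(D_1)} + \|h\|_{W^{1,p}(D_1)} + \|\nabla g\|_{C^{0,\eta}(\Delta_1)}$ and $I(r) := \big(\dashint_{D_r}|u_\varepsilon|^2 dx\big)^{1/2}$.

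\textbf{Step 1 (one-step improvement).} For each fixed $\sigma \in (0,1)$ I claim there exist $\theta \in (0, 1/4)$ and $\varepsilon_0 > 0$, depending only on $\mu, p, d, \sigma, M$, such that whenever $0 < \varepsilon \leq \varepsilon_0$, $g(0)=0$, and $I(1) + \Phi \leq 1$, one has $I(\theta) \leq \theta^\sigma$. I argue by contradiction: if the claim fails, extract sequences $\varepsilon_k \to 0$, matrices $A_k$ satisfying \eqref{a:1}--\eqref{a:2}, graphs $\psi_k$ with $\|\nabla \psi_k\|_{C^{0,\tau}} \leq M$, and solutions $(u_k, p_k)$ with $I_k(1) + \Phi_k \leq 1$ yet $I_k(\theta) > \theta^\sigma$. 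Caccioppoli's energy estimate coupled with the Bogovskii/Necas pressure bound yields uniform $H^1 \times L^2/\mathbb{R}$ control of $(u_k, p_k)$ on $D_{1/2}$; passing to a subsequence, $\psi_k \to \psi_\infty$ in $C^1$, $u_k \to u_0$ strongly in $L^2$ and weakly in $H^1$, and $p_k - c_k \rightharpoonup p_0$ in $L^2$. By periodic $H$-convergence for Stokes systems (which, for the periodic operators considered here, reduces to the two-scale corrector construction recalled in Subsection~\ref{subsec:2.3}), the limit $(u_0, p_0)$ solves the constant-coefficient Stokes system $-\mathrm{div}(\widehat{A}_\infty \nabla u_0) + \nabla p_0 = 0$, $\mathrm{div}(u_0) = 0$ in the limiting half-domain, with $u_0 = 0$ on the flat boundary piece, where $\widehat{A}_\infty$ satisfies \eqref{a:1}. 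The classical boundary Schauder bound \eqref{pri:2.18} specialized to constant coefficients with zero source and zero boundary datum yields $\big(\dashint_{D_\theta}|u_0|^2 dx\big)^{1/2} \leq C_0 \theta^{\sigma'}$ for every $\sigma' \in (0,1)$. Choosing $\sigma' \in (\sigma, 1)$ and then $\theta$ so small that $C_0 \theta^{\sigma'} < \tfrac{1}{2}\theta^\sigma$, the strong $L^2$ convergence contradicts $I_k(\theta) > \theta^\sigma$ for large $k$, proving the claim.

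\textbf{Step 2 (iteration).} The Stokes system and the $C^{1,\tau}$ hypothesis on $\psi$ are invariant under the rescaling $\tilde u(x) := \theta^{-j\sigma} u_\varepsilon(\theta^j x)$, which solves a Stokes system with parameter $\varepsilon/\theta^j$ and matrix $A(\theta^j \cdot /\varepsilon)$ still in the admissible class, while the data norm $\Phi$ transforms by at most a factor $\theta^{j(1-d/p-\sigma)}$ that is uniformly bounded because $p>d$ and $\sigma<1$. As long as $\varepsilon/\theta^j \leq \varepsilon_0$, the one-step claim applies to $\tilde u$ and gives $I(\theta^{j+1}) \leq \theta^\sigma I(\theta^j) + C\theta^{(j+1)\sigma}\Phi$. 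Summing the geometric series delivers the required bound at every dyadic scale $\theta^j \geq \varepsilon$. For a generic $r \in [\varepsilon, 1/4)$, choose $j$ with $\theta^{j+1} \leq r < \theta^j$ and use $|D_{\theta^j}| \leq C|D_r|$ to pass to continuous scales. The complementary range $\varepsilon > \varepsilon_0$ involves only boundedly many scales and is handled directly by the standard $H^1$ energy inequality.

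\textbf{Main obstacle.} The chief subtlety is the Stokes-specific compactness step. The pressure is defined only modulo constants and lies a priori only in $L^2$, so extracting a \emph{well-posed} limit Stokes problem requires (a) a uniform Bogovskii/Necas bound for $\{p_k - c_k\}$ in $L^2(D_{1/2})$, and (b) the $H$-convergence statement that identifies a well-defined homogenized constant matrix $\widehat{A}_\infty$ in the limit. Under the periodicity assumption \eqref{a:2}, (b) reduces to the corrector analysis of Subsection~\ref{subsec:2.3}; with (a) and (b) in hand, the remainder proceeds in lock-step with the classical elliptic Avellaneda--Lin compactness scheme.
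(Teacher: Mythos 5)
Your proposal follows the same route as the paper: the paper does not prove this theorem at all but simply quotes it from \cite[Theorem 6.1]{SGZWS}, where it is established precisely by the Avellaneda--Lin compactness scheme you describe (one-step improvement by contradiction using $H$-compactness of the periodic Stokes operators and uniform pressure bounds, followed by iteration and rescaling). Two small inaccuracies to repair: (i) since you only normalize $I(1)+\Phi\leq 1$, the data do \emph{not} vanish along the contradicting sequence, so the limit $(u_0,p_0)$ solves the constant-coefficient system with sources $F_\infty$, divergence $h_\infty$ and boundary datum $g_\infty$ all bounded by $1$ rather than the homogeneous problem; the needed decay $\big(\dashint_{D_\theta}|u_0|^2\big)^{1/2}\leq C_0\theta^{\sigma'}$ still follows from the boundary H\"older estimate for the inhomogeneous limit problem together with $u_0(0)=g_\infty(0)=0$. (ii) In the iteration, the correct scaling exponents for the data are $2-\sigma-d/p$ for $F$ and $\nabla h$ and $1-\sigma$ for $h$ and $g$ (not $1-d/p-\sigma$, which can be negative); these are positive under $p>d$ and $\sigma<1$, so your conclusion that the rescaled data stay uniformly bounded is correct.
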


\begin{lemma}\label{lemma:5.4}
Let $\varepsilon\leq r<1$.
Assume the same conditions as in Theorem $\ref{thm:5.2}$.
Let $(u_\varepsilon, p_\varepsilon)\in H^1(D_5;\mathbb{R}^d)
\times L^2(D_5)$ be a weak solution of
$\mathcal{L}_\varepsilon(u_\varepsilon) + \nabla p_\varepsilon = F$ and
$\emph{div}(u_\varepsilon) = h$ in
$D_5$, and $u_\varepsilon = g$ on $\Delta_5$.  Then there exists
$(u_0,p_0)\in H^1(D_2;\mathbb{R}^d)\times L^2(D_2)$ such that
$\mathcal{L}_0(u_0) + \nabla p_0 = F$ and $\emph{div}(u_0) = h$ in $D_2$ and
$u_0 = g$ on $\Delta_2$, and for some $\lambda>0$, then
\begin{equation}\label{pri:5.11}
\begin{aligned}
 \Big(\dashint_{D_r} |u_\varepsilon - u_0|^2  dx\Big)^{\frac{1}{2}}
 &\leq C\left(\frac{\varepsilon}{r}\right)^{\lambda}
 \bigg\{\Big(\dashint_{D_{2r}}|u_\varepsilon|^2 dx\Big)^{\frac{1}{2}}
 + r^2\Big(\dashint_{D_{2r}} |F|^pdx\Big)^{\frac{1}{p}} + r\|h\|_{L^{\infty}(D_{2r})}\\
 &\quad+ r^2\Big(\dashint_{D_{2r}}|\nabla h|^p dx\Big)^{\frac{1}{p}}
 + r\|\nabla g\|_{L^{\infty}(\Delta_{2r})}
 + r^{1+\eta}\big[\nabla g\big]_{C^{0,\eta}(\Delta_{2r})}\bigg\},
\end{aligned}
\end{equation}
where $\lambda = 2\sigma-\frac{3}{2}$,
and $C$ depends only on $\mu,\omega,\lambda,\tau,\eta,M,\sigma$ and $d$.
\end{lemma}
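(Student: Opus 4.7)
The plan is to implement the strategy sketched at \eqref{f:0.3}--\eqref{f:0.4}: construct $u_0$ on $D_2$ with boundary values matching those of $u_\varepsilon$, invoke the global convergence rate of Theorem \ref{thm:3.3}, and absorb the resulting Lipschitz norm of $u_\varepsilon$ on $\partial D_2$ by a non-uniform local Lipschitz estimate of order $\varepsilon^{\sigma-1}$. The standard rescaling
\begin{equation*}
v(y) = r^{-1} u_\varepsilon(ry),\qquad q(y) = p_\varepsilon(ry),\qquad \tilde{F}(y) = r F(ry),\qquad \tilde{h}(y) = h(ry),\qquad \tilde{g}(y) = r^{-1} g(ry)
\end{equation*}
sends $\mathcal{L}_\varepsilon \mapsto \mathcal{L}_{\varepsilon/r}$, preserves $\mathcal{L}_0$ and the class of $C^{1,\tau}$ graphs (with $\tilde\psi(y') = r^{-1}\psi(ry')$), and an elementary check shows that the two sides of \eqref{pri:5.11} scale consistently; hence it suffices to establish the estimate in the case $r=1$ for $\varepsilon \in (0, 1]$.

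Define $(u_0, p_0) \in H^1(D_2;\mathbb{R}^d) \times L^2(D_2)$ to be the weak solution of $\mathcal{L}_0(u_0) + \nabla p_0 = F$ and $\text{div}(u_0) = h$ in $D_2$ with $u_0 = u_\varepsilon$ on $\partial D_2$. The compatibility condition is automatic since $\text{div}(u_\varepsilon) = h$ in $D_5 \supset D_2$, and as $u_\varepsilon = g$ on $\Delta_2$ one also has $u_0 = g$ on $\Delta_2$. Applying Theorem \ref{thm:3.3} to the pair $(u_\varepsilon, u_0)$ on (a harmless $C^1$ regularization of) $D_2$ gives, for any $\sigma \in (1/2, 1)$,
\begin{equation*}
\|u_\varepsilon - u_0\|_{L^2(D_1)} \leq C \varepsilon^{\sigma - \frac{1}{2}} \Big\{\|F\|_{L^p(D_2)} + \|h\|_{W^{1,p}(D_2)} + \|u_\varepsilon\|_{C^{0,1}(\partial D_2)}\Big\}.
\end{equation*}
On $\Delta_2$ one has $\|u_\varepsilon\|_{C^{0,1}(\Delta_2)} \leq C\|g\|_{C^{1,\eta}(\Delta_2)}$, so the remaining task is to control $\|\nabla u_\varepsilon\|_{L^\infty(\partial D_2 \setminus \Delta_2)}$.

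Each point of $\partial D_2 \setminus \Delta_2$ has distance at least $c_0 > 0$ from $\Delta_5$. Since $A(\cdot/\varepsilon)$ has $C^{0,\tau}$ norm equal to $\kappa$ when measured on a ball of radius $\varepsilon$ (by scaling invariance), the classical interior Schauder estimate applied to $\mathcal{L}_\varepsilon u_\varepsilon + \nabla p_\varepsilon = F$ on $B(y, c\varepsilon)$ yields
\begin{equation*}
|\nabla u_\varepsilon(y)| \leq \frac{C}{\varepsilon}\Big(\dashint_{B(y, c\varepsilon)} |u_\varepsilon - u_\varepsilon(y)|^2\Big)^{\frac{1}{2}} + \text{(lower-order data terms)}.
\end{equation*}
The oscillation is dominated by $C\varepsilon^\sigma$ times the uniform H\"older seminorm of $u_\varepsilon$, which, by \eqref{pri:4.6} together with its interior counterpart, is controlled by $\big(\dashint_{D_2} |u_\varepsilon|^2\big)^{1/2}$ plus data. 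Combining yields $\|\nabla u_\varepsilon\|_{L^\infty(\partial D_2 \setminus \Delta_2)} \leq C\varepsilon^{\sigma-1}\{\cdots\}$. Feeding this back into the convergence-rate estimate and noting that $\varepsilon^{\sigma-1/2} \leq \varepsilon^{2\sigma-3/2}$ for $\varepsilon \in (0,1]$ and $\sigma < 1$ produces \eqref{pri:5.11} at $r = 1$ with $\lambda = 2\sigma - \tfrac{3}{2}$; choosing $\sigma \in (3/4, 1)$ gives $\lambda \in (0, 1/2)$.

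The main obstacle is arranging the right-hand side to live on $D_{2r}$ rather than a strictly larger domain, since Schauder balls at points of $\partial D_2 \setminus \Delta_2$ stick slightly outside $D_2$. I would bypass this by executing the convergence-rate step on an intermediate domain $D_\rho$ with $\rho \in (1, 2)$, so that for $\varepsilon$ small every $\varepsilon$-ball along $\partial D_\rho \setminus \Delta_\rho$ remains inside $D_2$; the uniform H\"older estimate then delivers the oscillation bound purely in terms of $\|u_\varepsilon\|_{L^2(D_2)}$ and the prescribed data. Undoing the rescaling of the first paragraph recovers \eqref{pri:5.11} for arbitrary $r \in [\varepsilon, 1)$.
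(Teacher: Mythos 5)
Your overall strategy is exactly that of the paper: rescale to $r=1$, let $(u_0,p_0)$ solve $(\text{D}_0)$ in $D_2$ with boundary data $u_\varepsilon$ on $\partial D_2$, invoke Theorem~\ref{thm:3.3} to produce the factor $\varepsilon^{\sigma-1/2}$ alongside $\|\nabla u_\varepsilon\|_{L^\infty(\partial D_2\setminus\Delta_2)}$, and then bound the latter nonuniformly by $C\varepsilon^{\sigma-1}$ via a Schauder-plus-uniform-H\"older argument, giving $\lambda=2\sigma-\tfrac32$ after rescaling. However, there is a genuine error in the Schauder step. You assert that every point of $\partial D_2\setminus\Delta_2$ lies at distance at least a fixed $c_0>0$ from $\Delta_5$; this is false. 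Recall $D_2=\{|x'|<2,\ \psi(x')<x_d<\psi(x')+2\}$ and $\Delta_2=\{|x'|<2,\ x_d=\psi(x')\}$. The lateral part $\{|x'|=2\}\cap\partial D_2$ of $\partial D_2\setminus\Delta_2$ reaches all the way down to the graph $x_d=\psi(x')$: the points $(x',\psi(x'))$ with $|x'|=2$ belong to $\overline{\partial D_2\setminus\Delta_2}$ and lie on $\Delta_5$. Consequently, near these ``corners'' the ball $B(y,c\varepsilon)$ is not contained in $D_5$, and the \emph{interior} Schauder/Lipschitz estimate you invoke does not apply. This is precisely why the paper covers $\partial D_2\setminus\Delta_2$ by a mixture of boundary patches $\tilde D_{4\varepsilon}$ (handled with the local \emph{boundary} Lipschitz estimate \eqref{pri:2.17} and the uniform boundary H\"older estimate \eqref{pri:4.6}, see \eqref{f:5.12}) and interior balls $\tilde B_\varepsilon$ (further split by $\mathrm{dist}(x,\Delta_2)\in[\varepsilon,1/8]$ versus $>1/8$, see \eqref{f:5.14} and \eqref{f:5.13}); without the boundary patches the argument is incomplete.

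Your proposed remedy of replacing $D_2$ by an intermediate $D_\rho$, $\rho\in(1,2)$, addresses a different (and nonexistent) problem --- Schauder balls poking slightly beyond $D_2$ into $D_5$ is harmless --- and does nothing about the actual difficulty: for any $\rho$ the lateral boundary $\{|x'|=\rho\}\cap\partial D_\rho$ still touches the graph $\{x_d=\psi(x')\}$, so points of $\partial D_\rho\setminus\Delta_\rho$ arbitrarily close to the boundary persist. What you need instead is the paper's dichotomy: for $y\in\partial D_2\setminus\Delta_2$ with $\mathrm{dist}(y,\Delta_2)\lesssim\varepsilon$, use the boundary Lipschitz estimate \eqref{pri:2.17} at scale $\varepsilon$ together with \eqref{pri:4.6}; for $\varepsilon\lesssim\mathrm{dist}(y,\Delta_2)\le 1/8$, use the boundary estimate at scale $r=\mathrm{dist}(y,\Delta_2)$ and \eqref{pri:4.6}, which still yields a factor $r^{\sigma-1}\le\varepsilon^{\sigma-1}$; and for $\mathrm{dist}(y,\Delta_2)>1/8$, use the uniform interior Lipschitz estimate of~\cite{SGZWS}. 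With this covering in place, the rest of your argument (including the scaling) is correct.
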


\begin{proof}
By rescaling argument, we may assume $r=1$. Since $\text{div}(u_\varepsilon) = h$ in $D_2$,
there exists $(u_0,p_0)\in H^1(D_2;\mathbb{R}^d)\times L^2(D_2)$ satisfying
$\mathcal{L}_0(u_0)+\nabla p_0 = F$, and $\text{div}(u_0) = h$ in $D_2$,
and $u_0 = u_\varepsilon$ on $\partial D_2$.
In view of Theorem $\ref{thm:3.3}$, we have
\begin{equation}\label{f:5.8}
\begin{aligned}
\|u_\varepsilon - u_0\|_{L^2(D_1)}
\leq C\varepsilon^{\sigma-\frac{1}{2}}\bigg\{
 \|F\|_{L^{p}(D_2)} + \|h\|_{W^{1,p}(D_2)}
+\|\nabla g\|_{L^{\infty}(\Delta_2)}
+\|\nabla u_\varepsilon\|_{L^\infty(\partial D_2\setminus\Delta_2)}\bigg\},
\end{aligned}
\end{equation}
and it remains to estimate the last term in the right-hand side of $\eqref{f:5.8}$.
It is clear to see that $\partial D_2\setminus\Delta_2$ may be covered by
$\{\tilde{D}_{4\varepsilon}\}$ and $\{\tilde{B}_\varepsilon\}$.
Hence it follows from the local estimate $\eqref{pri:2.17}$ that
\begin{equation}\label{f:5.12}
\begin{aligned}
\|\nabla u_\varepsilon\|_{L^\infty(\tilde{D}_{4\varepsilon})}
&\leq C\bigg\{\frac{1}{\varepsilon}\Big(\dashint_{\tilde{D}_{8\varepsilon}}|u_\varepsilon|^2dx\Big)^{\frac{1}{2}}
  +\varepsilon\Big(\dashint_{\tilde{D}_{8\varepsilon}}|F|^pdx\Big)^{\frac{1}{p}}
  +\varepsilon\Big(\dashint_{\tilde{D}_{8\varepsilon}}|\nabla h|^pdx\Big)^{\frac{1}{p}} \\
& \qquad\qquad\qquad\qquad +\|h\|_{L^\infty(D_{4})}
  +\|\nabla g\|_{L^\infty(\Delta_{4})}
  +\varepsilon^{\eta}[\nabla g]_{C^{0,\eta}(\Delta_{4})}\bigg\} \\
&\leq C\varepsilon^{\sigma-1}\bigg\{\Big(\dashint_{D_{4}}|u_\varepsilon|^2 dx\Big)^{\frac{1}{2}}
+\Big(\dashint_{D_{4}}|F|^p dx\Big)^{\frac{1}{p}}
+ \|h\|_{W^{1,p}(D_{4})}
+ \|\nabla g\|_{C^{0,\eta}(\Delta_{4})}\bigg\},
\end{aligned}
\end{equation}
where the second inequality follows from the uniform H\"older estimate $\eqref{pri:4.6}$.
Then for any $\tilde{B}_\varepsilon(x)$, where $x\in\partial D_2\setminus\Delta_2$, there are
two cases: (1) $r=\text{dist}(x,\Delta_2)\in [\varepsilon,1/8]$ and (2)
$r> \frac{1}{8}$. Obviously, the second case follows from the
uniform interior Lipschitz estimates \cite[Corollary 1.2]{SGZWS} that
\begin{equation}\label{f:5.13}
\begin{aligned}
\|\nabla u_\varepsilon\|_{L^\infty(\tilde{B}_\varepsilon)}
&\leq C\bigg\{\Big(\dashint_{\tilde{B}_{\frac{1}{4}}}|\nabla u_\varepsilon|^2 dx\Big)^{\frac{1}{2}}
+ \|h\|_{C^{0,\rho}(\tilde{B}_{\frac{1}{4}})} + \|h\|_{L^\infty(\tilde{B}_{\frac{1}{4}})}
\bigg\} \\
&\leq C\bigg\{\Big(\dashint_{D_{4}}|u_\varepsilon|^2 dx\Big)^{\frac{1}{2}}
+\Big(\dashint_{D_{4}}|F|^p dx\Big)^{\frac{1}{p}}
+ \|h\|_{W^{1,p}(D_{4})}
+ \|\nabla g\|_{L^{\infty}(\Delta_{4})}\bigg\},
\end{aligned}
\end{equation}
where $\rho= 1-d/p$, we use Caccippoli's inequality \cite[Theorem 6.2]{SGZWS} and the Sobolev imbedding theorem
in the last inequality. We now turn to study the case (1):
\begin{equation}\label{f:5.14}
\begin{aligned}
\|\nabla u_\varepsilon\|_{L^\infty(\tilde{B}_{\varepsilon})}
&\leq \|\nabla u_\varepsilon\|_{L^\infty(\tilde{B}_{r/2})} \\
&\leq C\bigg\{\frac{1}{r}\Big(\dashint_{\tilde{D}_{2r}}|u_\varepsilon|^2dx\Big)^{\frac{1}{2}}
  +r\Big(\dashint_{\tilde{D}_{2r}}|F|^pdx\Big)^{\frac{1}{p}}
  +r\Big(\dashint_{\tilde{D}_{2r}}|\nabla h|^pdx\Big)^{\frac{1}{p}} \\
& \qquad\qquad\qquad\qquad +\|h\|_{L^\infty(D_{4})}
  +\|\nabla g\|_{L^\infty(\Delta_{4})}
  +r^{\eta}[\nabla g]_{C^{0,\eta}(\Delta_{4})}\bigg\} \\
&\leq C\varepsilon^{\sigma-1}\bigg\{\Big(\dashint_{D_{4}}|u_\varepsilon|^2 dx\Big)^{\frac{1}{2}}
+\Big(\dashint_{D_{4}}|F|^p dx\Big)^{\frac{1}{p}}
+ \|h\|_{W^{1,p}(D_{4})}
+ \|\nabla g\|_{C^{0,\eta}(\Delta_{4})}\bigg\},
\end{aligned}
\end{equation}
where we use the uniform H\"older estimate $\eqref{pri:4.6}$, as well as the fact that $r>\varepsilon$,
in the last inequality.

Consequently, combining $\eqref{f:5.8}$, $\eqref{f:5.12}$, $\eqref{f:5.13}$ and $\eqref{f:5.14}$, we have
\begin{equation*}
\begin{aligned}
\big\|u_\varepsilon - u_0\big\|_{L^2(D_1)}
&\leq C\varepsilon^{\lambda}
\bigg\{\Big(\dashint_{D_{4}}|\nabla u_\varepsilon|^2 dx\Big)^{\frac{1}{2}}
+ \Big(\dashint_{D_{4}}|F|^p dx\Big)^{\frac{1}{p}}
+ \|h\|_{W^{1,p}(D_4)}
+ \|\nabla g\|_{C^{0,\eta}(\Delta_4)} \bigg\},
\end{aligned}
\end{equation*}
where $\lambda = 2\sigma-\frac{3}{2}$.
By rescaling argument we can derive
the desired estimate $\eqref{pri:5.11}$, and we have completed the proof.
\end{proof}

Before we proceed further, for any matrix $M\in \mathbb{R}^d$, we denote $G(r,v)$ as the following
\begin{equation}
\begin{aligned}
G(r,v) &= \frac{1}{r}\inf_{M\in\mathbb{R}^{d\times d}}
\Bigg\{\Big(\dashint_{D_r}|v-Mx|^2dx\Big)^{\frac{1}{2}}
+ r^2\Big(\dashint_{D_r}|F|^pdx\Big)^{\frac{1}{p}}+r\|h-\text{Tr}(M)\|_{L^\infty(D_r)} \\
&\qquad\quad +r^2\Big(\dashint_{D_r}|\nabla h|^pdx\Big)^{\frac{1}{p}}+ r\big\|\nabla_T (g-Mx)\big\|_{L^\infty(\Delta_{r})}
+r^{1+\eta}\big[\nabla_T(g-Mx)\big]_{C^{0,\eta}(\Delta_r)}\Bigg\},
\end{aligned}
\end{equation}
where $\text{Tr}(M)$ denotes the trace of $M$.

\begin{lemma}\label{lemma:5.5}
Let $(u_0,p_0)\in H^1(D_4;\mathbb{R}^d)\times L^2(D_4)$ be a solution of
$\mathcal{L}_0(u_0)+\nabla p_0 = F$ and $\emph{div}(u_0) = h$ in $D_4$, and
$u_0 = g$, where $g\in C^{0,1}(\Delta_4)$ with $g(0)=0$. Then
there exists $\theta\in(0,1/4)$, depending on $\mu,d,\eta$ and $M$, such that
\begin{equation}\label{pri:5.10}
 G(\theta r,u_0)\leq \frac{1}{2} G(r,u_0)
\end{equation}
holds for any $r\in(0,1)$.
\end{lemma}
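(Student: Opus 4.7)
The plan is to run a standard affine Campanato iteration for the homogenized Stokes system, with the boundary Schauder estimate \eqref{pri:2.18} serving as the fixed-scale input. By the change of variable $y = x/r$ together with the rescalings $\tilde u_0(y) = u_0(ry)/r$, $\tilde p_0(y) = p_0(ry)$, $\tilde F(y) = rF(ry)$, $\tilde h(y) = h(ry)$, $\tilde g(y) = g(ry)/r$, one checks that $\tilde u_0$ solves a homogenized Stokes system on the rescaled domain $\tilde D_1$ whose defining function $\tilde\psi(y') = \psi(ry')/r$ still satisfies $\|\nabla\tilde\psi\|_{C^{0,\tau}} = r^\tau\|\nabla\psi\|_{C^{0,\tau}} \leq M$, and that every ingredient of $G$ rescales so as to give $G(r,u_0) = G(1,\tilde u_0)$ and $G(\theta r, u_0) = G(\theta, \tilde u_0)$. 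Thus it suffices to establish $G(\theta, u_0) \leq \tfrac{1}{2} G(1, u_0)$.

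Pick $M_0 \in \mathbb{R}^{d\times d}$ nearly attaining the infimum defining $G(1, u_0)$ and set $v = u_0 - M_0 x$. Because $\mathcal{L}_0$ annihilates affine functions, $(v, p_0)$ satisfies $\mathcal{L}_0 v + \nabla p_0 = F$, $\mathrm{div}(v) = h - \mathrm{Tr}(M_0)$ in $D_1$, $v = g - M_0 x$ on $\Delta_1$, and $v(0) = 0$ (using $g(0) = 0$). Since $\eta < 1 - d/p$, the boundary Schauder estimate \eqref{pri:2.18} applies with $\rho = \eta$ on $D_{1/2}$ to $(v, p_0)$; inspection shows that every term on its right-hand side is one of the six ingredients under $\inf_M$ in $G(1, u_0)$, whence
\[ [\nabla v]_{C^{0,\eta}(D_{1/2})} + [p_0]_{C^{0,\eta}(D_{1/2})} \leq C\, G(1, u_0). \]

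Take $M_1 = M_0 + \nabla v(0)$ as the competitor in $G(\theta, u_0)$ and estimate each of the six terms. A first-order Taylor expansion gives $|v(x) - \nabla v(0) x| \leq C\theta^{1+\eta}[\nabla v]_{C^{0,\eta}}$ on $D_\theta$, hence the $L^2$ term is bounded by $C\theta^\eta G(1, u_0)$. The identity $h - \mathrm{Tr}(M_1) = \mathrm{div}(v) - \mathrm{div}(v)(0)$ yields the same bound for the $L^\infty$ divergence term. H\"older's inequality gives $\theta^2 \big(\dashint_{D_\theta}|F|^p\big)^{1/p} \leq C\theta^{2-d/p} G(1, u_0)$ and analogously for $\nabla h$, with $2 - d/p > 1 > \eta$. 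On $\Delta_\theta$ one has $\nabla_T(g - M_1 x) = (\nabla v - \nabla v(0))\cdot T$, where $T$ denotes the unit tangent frame; the $L^\infty$-norm on $\Delta_\theta$ is controlled by $C\theta^\eta[\nabla v]_{C^{0,\eta}}$, while the $C^{0,\eta}$ seminorm is bounded using the decomposition $T = T(0) + (T - T(0))$ together with $T \in C^{0,\tau}(\Delta_1)$ and $\tau \geq \eta$, giving two contributions each bounded by $C[\nabla v]_{C^{0,\eta}}$. Summing yields $G(\theta, u_0) \leq C_0\theta^\eta G(1, u_0)$ for some $C_0 = C_0(\mu, d, \tau, \eta, p, M)$, so choosing $\theta = \min\{1/4,\ (2C_0)^{-1/\eta}\}$ completes the proof. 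The main technical subtlety lies in this tangential step: separating the small H\"older modulus of $\nabla v$ (which carries the $\theta^\eta$ gain) from that of $T$ (which has no $\theta$-decay but is controlled by $\|\nabla\psi\|_{C^{0,\tau}} \leq M$) is what forces a dependence of $\theta$ on the domain smoothness $\tau$, and explains why the argument exploits $\tau \geq \eta$ rather than merely $g \in C^{1,\eta}$.
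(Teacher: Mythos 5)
Your proof is correct and follows essentially the same strategy as the paper: rescale to $r=1$, use the affine competitor $\nabla u_0(0)$ in $G(\theta,u_0)$, and control the resulting H\"older seminorm $[\nabla u_0]_{C^{0,\eta}(D_{1/2})}$ via the boundary Schauder estimate \eqref{pri:2.18} applied to $u_0 - Mx$ with $M$ arbitrary (your $M_1 = M_0 + \nabla v(0)$ is literally $\nabla u_0(0)$, the paper's choice). You are somewhat more explicit about the tangential-gradient terms and the role of $\tau \geq \eta$ in bounding the frame $T$, which the paper glosses over, but the underlying argument is identical.
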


\begin{proof}
We may assume $r=1$ by rescaling argument. By the definition of $G(\theta,u_0)$, we see that
\begin{equation*}
\begin{aligned}
G(\theta,u_0) &\leq  \frac{1}{\theta}
\bigg\{\Big(\dashint_{D_\theta}|u_0-M_0x|^2dx\Big)^{\frac{1}{2}}
+ \theta^2\Big(\dashint_{D_\theta}|F|^pdx\Big)^{\frac{1}{p}}+\theta\|h-\text{Tr}(M_0)\|_{L^\infty(D_\theta)} \\
&\quad+\theta^2\Big(\dashint_{D_\theta}|\nabla h|^pdx\Big)^{\frac{1}{p}}
+ \theta\big\|\nabla_T (g-M_0x)\big\|_{L^\infty(\Delta_{\theta})}
+\theta^{1+\eta}\big[\nabla_T(g-M_0x)\big]_{C^{0,\eta}(\Delta_\theta)}\bigg\} \\
&\leq \theta^{\eta}\bigg\{[\nabla u_0]_{C^{0,\eta}(D_{1/2})}
+\Big(\dashint_{D_{1/2}}|F|^pdx\Big)^{\frac{1}{p}}
+\Big(\dashint_{D_{1/2}}|\nabla h|^pdx\Big)^{\frac{1}{p}}
\bigg\},
\end{aligned}
\end{equation*}
where we choose $M_0 = \nabla u_0(0)$. For any
$M\in \mathbb{R}^{d\times d}$, we let $\tilde{u}_0 = u_0 - Mx$. Clearly it satisfies the
system: $\mathcal{L}_0(\tilde{u}_0)+\nabla p_0 = F$, and $\text{div}(\tilde{u}_0)
=h-\text{Tr}(M)$ in $D_4$, $\tilde{u}_0 = g-Mx$ on
$\Delta_4$. Hence it follows from boundary Schauder estimates $\eqref{pri:2.18}$ that
\begin{equation*}
\begin{aligned}
\big[\nabla u_0\big]_{C^{0,\eta}(D_{1/2})}
= \big[\nabla \tilde{u}_0\big]_{C^{0,\eta}(D_{1/2})}
\leq CG(1,u_0).
\end{aligned}
\end{equation*}
It is clear to see that there exists $\theta\in(0,1/4)$ such that
$G(\theta,u_0)\leq \frac{1}{2}G(1,u_0)$.
Then the desire result $\eqref{pri:5.10}$ can be obtained simply by a rescaling argument.
\end{proof}

For simplicity, we also denote $\Phi(r)$ by
\begin{equation*}
\begin{aligned}
\Phi(r) = \frac{1}{r}\bigg\{
\Big(\dashint_{D_{r}}|u_\varepsilon|^2 dx\Big)^{\frac{1}{2}}
 &+ r^2\Big(\dashint_{D_{r}} |F|^pdx\Big)^{\frac{1}{p}}+ r\|h\|_{L^{\infty}(D_{r})}
  \\
&+ r^2\Big(\dashint_{D_{r}}|\nabla h|^p dx\Big)^{\frac{1}{p}}+ r\|\nabla g\|_{L^{\infty}(\Delta_{2r})}
 + r^{1+\eta}\big[\nabla g\big]_{C^{0,\eta}(\Delta_{r})}\bigg\}.
\end{aligned}
\end{equation*}

\begin{lemma}\label{lemma:5.6}
Let $\lambda$ be given in Lemma $\ref{lemma:5.4}$. Assume the same conditions as in Theorem $\ref{thm:5.2}$.
Let $(u_\varepsilon,p_\varepsilon)$ be the solution of
$\mathcal{L}_\varepsilon(u_\varepsilon)+\nabla p_\varepsilon = F$ and
$\emph{div}(u_\varepsilon) = h$ in $D_5$ with $u_\varepsilon= g$ on
$\Delta_5$. Then we have
\begin{equation}
 G(\theta r, u_\varepsilon) \leq \frac{1}{2}G(r,u_\varepsilon)
 + C\left(\frac{\varepsilon}{r}\right)^\lambda\Phi(2r)
\end{equation}
for any $r\in[\varepsilon,1/2]$, where $\theta\in(0,1/4)$ is given in Lemma $\ref{lemma:5.5}$.
\end{lemma}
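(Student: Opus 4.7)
The plan is to combine the quantitative comparison provided by Lemma~\ref{lemma:5.4} with the geometric decay of $G(r,\cdot)$ for the homogenized solution proved in Lemma~\ref{lemma:5.5}. By rescaling we may as well think of $r$ as fixed. Let $(u_0,p_0)$ be the solution of $\mathcal{L}_0(u_0)+\nabla p_0=F$, $\operatorname{div}(u_0)=h$ in $D_{2r}$ with $u_0=u_\varepsilon$ on $\partial D_{2r}$ (which in particular gives $u_0=g$ on $\Delta_{2r}$), as supplied by Lemma~\ref{lemma:5.4}. The key observation is that in the definition of the functional $G(\cdot,v)$, every term except the one involving $v$ itself depends only on the data $F$, $h$, and $g$; therefore $G(\rho,u_\varepsilon)$ and $G(\rho,u_0)$ differ only through the $L^2$ term $\rho^{-1}\bigl(\dashint_{D_\rho}|v-Mx|^2\bigr)^{1/2}$.

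More precisely, taking $M$ to be the minimizer in the definition of $G(\theta r,u_0)$ and applying the triangle inequality inside the $L^2$ norm gives
\begin{equation*}
G(\theta r,u_\varepsilon) \leq G(\theta r,u_0) + \frac{1}{\theta r}\Big(\dashint_{D_{\theta r}}|u_\varepsilon-u_0|^2\,dx\Big)^{1/2},
\end{equation*}
and the analogous one-sided comparison
\begin{equation*}
G(r,u_0) \leq G(r,u_\varepsilon) + \frac{1}{r}\Big(\dashint_{D_{r}}|u_\varepsilon-u_0|^2\,dx\Big)^{1/2}.
\end{equation*}
Now I would apply Lemma~\ref{lemma:5.5} to the homogenized solution $u_0$ on the scale $r$ to obtain $G(\theta r,u_0)\leq \tfrac12 G(r,u_0)$, and chain these three inequalities together to arrive at
\begin{equation*}
G(\theta r,u_\varepsilon) \leq \tfrac12 G(r,u_\varepsilon) + \frac{C}{\theta r}\Big(\dashint_{D_{r}}|u_\varepsilon-u_0|^2\,dx\Big)^{1/2},
\end{equation*}
where I used the trivial containment $D_{\theta r}\subset D_{r}$ to absorb the first error term into the second up to a factor depending only on $\theta$ and $d$.

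To finish, I would invoke Lemma~\ref{lemma:5.4} at scale $r$ (recall $\varepsilon\leq r\leq 1/2$, so the hypothesis $\varepsilon\leq r$ of that lemma is available) to estimate the remaining $L^2$ discrepancy. Comparing the bracket on the right-hand side of \eqref{pri:5.11} with the definition of $\Phi(2r)$, they differ only by harmless constants in the coefficients of the various terms, so one obtains $(1/r)(\dashint_{D_r}|u_\varepsilon-u_0|^2\,dx)^{1/2}\leq C(\varepsilon/r)^\lambda\,\Phi(2r)$ with $\lambda=2\sigma-\tfrac32$. Substituting this back yields the claimed inequality with a constant absorbing the factor $\theta^{-1}$. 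The proof is almost entirely bookkeeping: the genuine difficulties (the convergence rate estimate for $u_\varepsilon-u_0$ up to the boundary, and the Schauder-based one-step decay for the homogenized problem) have already been settled in Lemmas~\ref{lemma:5.4} and~\ref{lemma:5.5}. The only point requiring care is to verify that all the $F$-, $h$-, and $g$-dependent terms on the two sides of the triangle inequality for $G$ match identically so that they drop out; this uses crucially that $u_\varepsilon$ and $u_0$ solve the same divergence constraint and carry the same Dirichlet data on $\Delta_{2r}$.
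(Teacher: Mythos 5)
Your argument is correct and is essentially the paper's own proof: both construct the homogenized comparison solution $(u_0,p_0)$ with Dirichlet data $u_\varepsilon$, use the triangle inequality for the $L^2$ term of $G$ in both directions, apply the one-step decay of Lemma~\ref{lemma:5.5} to $u_0$, and then control the $L^2$ discrepancy $u_\varepsilon-u_0$ via Lemma~\ref{lemma:5.4} to obtain the $(\varepsilon/r)^\lambda\Phi(2r)$ error. Your remark that the $F$-, $h$-, and $g$-dependent terms in $G$ cancel because $u_\varepsilon$ and $u_0$ share the divergence constraint and the boundary data on the flat portion is exactly the (implicit) justification the paper relies on for the two triangle-inequality steps.
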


\begin{proof}
Fix $r\in[\varepsilon,1/2]$, let $(u_0,p_0)$ be a solution to
$\mathcal{L}_0(u_0)+\nabla p_0 = F$  and $\text{div}(u_0) = h$ in $D_r$,
and $u_0 = u_\varepsilon$ on
$\partial D_r$. Then we have
\begin{equation*}
\begin{aligned}
G(\theta r,u_\varepsilon)
&\leq \frac{1}{\theta r}\Big(\dashint_{D_{\theta r}}|u_\varepsilon - u_0|^2 dx\Big)^{\frac{1}{2}}
+ G(\theta r, u_0) \\
&\leq \frac{C}{r}\Big(\dashint_{D_{r}}|u_\varepsilon - u_0|^2 dx\Big)^{\frac{1}{2}}
+ \frac{1}{2}G(r, u_0)\\
&\leq \frac{1}{2}G(r, u_\varepsilon)
+ \frac{C}{r}\Big(\dashint_{D_{r}}|u_\varepsilon - u_0|^2 dx\Big)^{\frac{1}{2}}\\
&\leq  \frac{1}{2}G(r, u_\varepsilon) + C(\varepsilon/r)^\lambda\Phi(2r),
\end{aligned}
\end{equation*}
where we use the estimate $\eqref{pri:5.10}$ in the second inequality,
and $\eqref{pri:5.11}$ in the last one. The proof is complete.
\end{proof}

\begin{lemma}\label{lemma:5.3}
Let $\Psi(r)$ and $\psi(r)$ be two nonnegative continuous functions on the integral $(0,1]$.
Let $0<\varepsilon<\frac{1}{4}$. Suppose that there exists a constant $C_0$ such that
\begin{equation}\label{pri:5.4}
\left\{\begin{aligned}
  &\max_{r\leq t\leq 2r} \Psi(r) \leq C_0 \Psi(2r),\\
  &\max_{r\leq s,t\leq 2r} |\psi(r)-\psi(s)|\leq C_0 \Psi(2r)
  \end{aligned}\right.
\end{equation}
for any $r\in[\varepsilon,1/2]$. We further assume that
\begin{equation}\label{pri:5.5}
\Psi(\theta r)\leq \frac{1}{2}\Psi(r) + C_0w(\varepsilon/r)\Big\{\Psi(2r)+\psi(2r)\Big\}
\end{equation}
holds for any $r\in[\varepsilon,1/2]$, where $\theta\in(0,1/4)$ and $\omega$ is a nonnegative
increasing function in $[0,1]$ such that $\omega(0)=0$ and
\begin{equation}\label{pri:5.6}
 \int_0^1 \frac{w(t)}{t} dt <\infty.
\end{equation}
Then, we have
\begin{equation}\label{pri:5.7}
\max_{\varepsilon\leq r\leq 1}\Big\{\Psi(r)+\psi(r)\Big\}
\leq C\Big\{\Psi(1)+\psi(1)\Big\},
\end{equation}
where $C$ depends only on $C_0, \theta$ and $w$.
\end{lemma}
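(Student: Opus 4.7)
The plan is a dyadic iteration of \eqref{pri:5.5} along the geometric scales $r_k := \theta^k$, coupled with telescoping estimates from \eqref{pri:5.4}; the Dini integrability \eqref{pri:5.6} is what renders the accumulated error $\varepsilon$-uniform.

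Let $N$ be the largest integer with $r_N \geq \varepsilon$, and set $a_k := \Psi(r_k)$, $b_k := \psi(r_k)$, $\omega_k := \omega(\varepsilon/r_k)$. Applying \eqref{pri:5.5} at $r = r_k$ gives $a_{k+1} \leq \tfrac12 a_k + C_0\,\omega_k\bigl(\Psi(2r_k) + \psi(2r_k)\bigr)$. Since $\theta < 1/4$, the interval $[2r_k, r_{k-1}]$ is covered by at most $L = \lceil \log_2(1/\theta) \rceil$ dyadic sub-intervals $[s, 2s]$; iterating both parts of \eqref{pri:5.4} along this chain produces
$$\Psi(2r_k) \leq C_1\,a_{k-1}, \qquad |\psi(2r_k) - b_{k-1}| \leq C_1\,a_{k-1}, \qquad |b_k - b_{k-1}| \leq C_1\,a_{k-1},$$
with $C_1$ depending only on $C_0$ and $\theta$. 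Substituting back yields the coupled system
$$a_{k+1} \leq \tfrac12\,a_k + C_2\,\omega_k\,(a_{k-1} + b_{k-1}), \qquad b_k \leq b_0 + C_1 \sum_{j=0}^{k-1} a_j.$$

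Unrolling the contractive recursion for $a_k$ and summing in $k$ absorbs the geometric weight $2^{-(k-j)}$ into a constant; combining with the telescoped bound for $b_k$ and writing $S := \max_{0 \leq k \leq N}(a_k + b_k)$ produces an inequality of the form
$$S \leq C_3\,\bigl(\Psi(1) + \psi(1)\bigr) + C_3\,S\,\sum_{k=0}^{N} \omega_k.$$
The decisive step is the $\varepsilon$-uniform bound, obtained by monotonicity of $\omega$ and the change of variable $s = \varepsilon\theta^{-k}$,
$$\sum_{k=0}^{N} \omega_k \leq \frac{1}{\log(1/\theta)}\int_0^{1/\theta}\frac{\omega(s)}{s}\,ds < \infty,$$
where the portion of the integral over $[1, 1/\theta]$ is finite by boundedness of $\omega$. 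A standard two-stage bootstrap (first produce a possibly large but $\varepsilon$-independent bound on $S$; then sharpen using the contraction $1/2$) closes the absorption and yields $S \leq C_4\bigl(\Psi(1) + \psi(1)\bigr)$. For arbitrary $r \in [\varepsilon, 1]$, locating $r$ in some block $[r_{k+1}, r_k]$ and applying \eqref{pri:5.4} once more transfers the bound from $r_k$ to $r$.

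The principal obstacle is the asymmetric coupling: the contractive recursion for $a_k$ is driven by $b_{k-1}$, while $b_k$ accumulates only additively through partial sums of $a_j$. Closing this coupling with a constant independent of $\varepsilon$ hinges on the Dini integrability \eqref{pri:5.6}; without it, the summed coefficient $\sum_k \omega(\varepsilon/r_k)$ would blow up like $\log(1/\varepsilon)$, destroying $\varepsilon$-uniformity.
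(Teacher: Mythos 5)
The paper does not prove this lemma; it simply cites \cite[Lemma 8.5]{SZW12}, so there is no in-house proof to compare against. On its own merits, your dyadic setup is correct: the scales $r_k=\theta^k$, the chaining of \eqref{pri:5.4} across the $O(\log(1/\theta))$ dyadic sub-intervals of $[2r_k,r_{k-1}]$ to get $\Psi(2r_k)\le C_1 a_{k-1}$ and $|\psi(2r_k)-b_{k-1}|\le C_1 a_{k-1}$, the resulting coupled recursion, the telescoping of $b_k$, and the change-of-variable bound $\sum_{k}\omega_k\le C(\theta,\omega)$ via \eqref{pri:5.6} are all sound and are the standard ingredients.

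The gap is at the closing step. After unrolling and summing you arrive at
\begin{equation*}
S\le C_3\big(\Psi(1)+\psi(1)\big)+C_3\,S\sum_{k=0}^{N}\omega_k,
\end{equation*}
and you propose to ``absorb'' the last term. This is only possible if $C_3\sum_k\omega_k<1$, but the Dini condition \eqref{pri:5.6} gives you finiteness of $W:=\sum_k\omega_k$, not smallness: $W$ converges (as $\varepsilon\to 0$) to $\tfrac{1}{\log(1/\theta)}\int_0^1\omega(s)s^{-1}\,ds$, which can be arbitrarily large, and in addition $\omega_k$ for $k$ near $N$ is of size $\omega(1)$, which is $O(1)$. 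When $C_3W\ge 1$ the displayed inequality carries no information. The ``two-stage bootstrap'' you invoke --- ``first produce a possibly large but $\varepsilon$-independent bound on $S$; then sharpen using the contraction $1/2$'' --- does not resolve this: producing \emph{any} $\varepsilon$-independent bound on $S$ is precisely the content of the lemma, and you give no mechanism for it; and once $C_3W\ge 1$, feeding a bound for $S$ back into the same inequality does not sharpen it.

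A correct way to close is a discrete Gronwall argument rather than absorption. With $T_m:=\sum_{i\le m}a_i$ and $M_m:=\max_{i\le m}(a_i+b_i)$, the telescoped $b$-bound gives $M_{j-2}\le b_0+(1+C_1)T_{j-2}$; substituting into $a_j\le\tfrac12 a_{j-1}+C_2\omega_{j-1}M_{j-2}$ and summing in $j$ yields, after moving $\tfrac12 T_m$ to the left,
\begin{equation*}
T_m\ \le\ A\ +\ B\sum_{i=1}^{m-1}\omega_i\,T_{i-1},\qquad A\le C(a_0+b_0),\ \ B=2C_2(1+C_1),
\end{equation*}
from which Gronwall gives $T_N\le A\exp(BW)$, hence $M_N\le C(\Psi(1)+\psi(1))$ with a constant that depends on $W$ exponentially but is $\varepsilon$-independent. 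Equivalently, one can split the index range at $j_1$ chosen so that $\omega_j<\delta_0$ for $j<j_1$; choosing $\delta_0$ small makes $\sum_{j<j_1}\omega_j$ small (here the Dini integral is used to make the \emph{tail near $0$} small, not the whole sum), so absorption does work on $[0,j_1)$, and since $\omega$ is increasing there are at most $K_0(\omega,\theta)$ indices in $[j_1,N]$, each of which increases $M_j$ by a bounded factor. Either route is fine; the absorption as you wrote it is not.
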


\begin{proof}
See \cite[Lemma 8.5]{SZW12}.
\end{proof}

\begin{flushleft}
\textbf{Proof of Theorem $\ref{thm:5.2}$}.
It is fine to assume $0<\varepsilon<1/4$, otherwise it follows from the classical theory.
In view of Lemma $\ref{lemma:5.3}$,
we set $\Psi(r) = G(r,u_\varepsilon)$, $w(t) =t^\lambda$,
where $\lambda>0$ is given in Lemma $\ref{lemma:5.4}$. In order to prove the desired estimate
$\eqref{pri:5.9}$, it is sufficient to verify $\eqref{pri:5.4}$ and $\eqref{pri:5.5}$. Let $\psi(r) = |M_r|$, where $M_r$ is the matrix associated with $\Psi(r)$, i.e., in the following sense,
\begin{equation*}
\begin{aligned}
\Psi(r) &= \frac{1}{r}
\Bigg\{\Big(\dashint_{D_r}|u_\varepsilon-M_r x|^2dx\Big)^{\frac{1}{2}}
+ r^2\Big(\dashint_{D_r}|F|^pdx\Big)^{\frac{1}{p}}+r\|h-\text{Tr}(M_r)\|_{L^\infty(D_r)} \\
&\qquad\quad +r^2\Big(\dashint_{D_r}|\nabla h|^pdx\Big)^{\frac{1}{p}}+ r\big\|\nabla_T (g-M_r x)\big\|_{L^\infty(\Delta_{r})}
+r^{1+\eta}\big[\nabla_T(g-M_r x)\big]_{C^{0,\eta}(\Delta_r)}\Bigg\},
\end{aligned}
\end{equation*}
Then we have,
\begin{equation*}
 \Phi(2r) \leq \Psi(2r) + \psi(2r).
\end{equation*}
\end{flushleft}
This together with Lemma $\ref{lemma:5.6}$ gives
\begin{equation*}
\Psi(\theta r)\leq \frac{1}{2}\Psi(r) + C_0 w(\varepsilon/r)\Big\{\Psi(2r)+\psi(2r)\Big\},
\end{equation*}
which satisfies the condition $\eqref{pri:5.5}$ in Lemma $\ref{lemma:5.3}$.
Let $t,s\in [r,2r]$, and $v(x)=(M_t-M_s)x$. It is clear to see $v$ is harmonic in $\mathbb{R}^d$.
Since $D_r$ satisfies the interior ball condition, we arrive at
\begin{equation}\label{f:5.15}
\begin{aligned}
|M_t-M_s|&\leq \frac{C}{r}\Big(\dashint_{D_r}|(M_t-M_s)x|^2dx\Big)^{\frac{1}{2}}\\
&\leq \frac{C}{t}\Big(\dashint_{D_t}|u_\varepsilon - M_tx|^2dx\Big)^{\frac{1}{2}}
+ \frac{C}{s}\Big(\dashint_{D_s}|u_\varepsilon - M_sx|^2dx\Big)^{\frac{1}{2}}\\
&\leq C\Big\{\Psi(t)+\Psi(s)\Big\}\leq C\Psi(2r),
\end{aligned}
\end{equation}
where the second and the last steps are based on the fact that $s,t\in[r,2r]$. Due to the same reason, it
is easy to obtain $\Psi(r)\leq C\Psi(2r)$, and the estimate $\eqref{f:5.15}$ admits the condition
$\eqref{pri:5.4}$. Besides, $w$ here obviously satisfies the condition $\eqref{pri:5.6}$.
Hence, according to Lemma $\ref{lemma:5.3}$, for any $r\in[\varepsilon,1/4]$,
we have the following estimate
\begin{equation}\label{f:5.16}
\frac{1}{r}\Big(\dashint_{D_{2r}}|u_\varepsilon|^2 dx\Big)^{\frac{1}{2}}
\leq C\Big\{\Psi(2r) + \psi(2r) \Big\}
\leq C\Big\{\Psi(1) + \psi(1) \Big\}.
\end{equation}
Hence, for $\varepsilon\leq r<(1/4)$, the desired estimate $\eqref{pri:5.9}$ consequently follows from
$\eqref{f:5.16}$ and the Cacciopoli's inequality \cite[Theorem 6.2]{SGZWS}.
Obviously, the case of $0<r<\varepsilon$ can be done simply by the blow-up argument, and
we have completed the proof.
\qed

\section{$L^\infty$ estimates on pressure term}

Now we move on to provide the boundary $L^\infty$ estimates on the pressure term.
\begin{thm}\label{thm:6.2}
Assume the same conditions as in Theorem $\ref{thm:1.1}$.
Let $(u_\varepsilon,p_\varepsilon)\in H^1(D_5;\mathbb{R}^d)\times L^2(D_5)$
be the solution of $\mathcal{L}_\varepsilon(u_\varepsilon)+\nabla p_\varepsilon = F$
and $\emph{div}(u_\varepsilon) = h$ in $D_5$ with
$u_\varepsilon = g$ on $\Delta_5$, where $F\in L^p(D_5;\mathbb{R}^d)$ and
$h\in W^{1,p}(D_5)$ with $p>d$, and $g\in C^{1,\eta}$ with $g(0)=0$ and $\eta\in(0,\tau]$.
Then there holds
\begin{equation}\label{pri:6.4}
\begin{aligned}
\Big|\dashint_{D_r}p_\varepsilon - \dashint_{D_1}p_\varepsilon dx\Big|
&\leq C\bigg\{\Big(\dashint_{D_2} |u_\varepsilon|^2dx\Big)^{\frac{1}{2}}  \\
&+ \Big(\dashint_{D_2} |F|^p  dx\Big)^{\frac{1}{p}}+ \Big(\dashint_{D_2} |\nabla h|^pdx\Big)^{\frac{1}{p}}+ \|h\|_{L^\infty(D_2)}
+ \|\nabla g\|_{C^{0,\eta}(\Delta_2)}\bigg\}
\end{aligned}
\end{equation}
for any $0< r<(1/4)$,
where the constant $C$ depends on $\mu,d,\eta,M$ and $p$.
\end{thm}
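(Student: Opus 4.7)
The plan is a dyadic-telescoping argument in the spirit of the introduction. First, the case $0<r\leq\varepsilon$ is handled by a blow-up: rescaling via $\tilde u(y)=\varepsilon^{-1}u_\varepsilon(\varepsilon y)$, $\tilde p(y)=p_\varepsilon(\varepsilon y)$ produces a fixed-coefficient Stokes system on a domain of order one, and the local boundary Schauder estimate $\eqref{pri:2.18}$ applied to $(\tilde u,\tilde p)$ controls $\|p_\varepsilon-\dashint_{D_\varepsilon}p_\varepsilon\|_{L^\infty(D_r)}$ by the right-hand side of $\eqref{pri:6.4}$; combined with the bound at $r=\varepsilon$ obtained below, this settles $r\leq\varepsilon$. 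For $\varepsilon\leq r<1/4$ I choose $k$ with $2^{-k-1}\leq r<2^{-k}$ (so $2^k\leq 2/\varepsilon$) and use the telescoping decomposition
\[
\Big|\dashint_{D_r}p_\varepsilon-\dashint_{D_1}p_\varepsilon\Big|\leq\Big|\dashint_{D_r}p_\varepsilon-\dashint_{D_{2^{-k}}}p_\varepsilon\Big|+\sum_{j=1}^{k}\Big|\dashint_{D_{2^{-j}}}p_\varepsilon-\dashint_{D_{2^{-j+1}}}p_\varepsilon\Big|.
\]

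For each $j=1,\dots,k$, set $s=2^{-j+1}$ and introduce the local homogenized solution $(u_0,p_0)$ in $D_s$ with $u_0=u_\varepsilon$ on $\partial D_s$ and the same $F,h$ data. The scaling-invariant form of the pressure convergence rate $\eqref{pri:3.14}$ (the announced Lemma 6.2) yields, for some constant $c_s$,
\[
\Big(\dashint_{D_s}\big|p_\varepsilon-p_0-\pi(\cdot/\varepsilon)\psi_{2\varepsilon}\nabla u_0-c_s\big|^2\,dx\Big)^{1/2}\leq C(\varepsilon/s)^{1/2}\mathcal{R}_s,
\]
where $\mathcal{R}_s$ denotes the scale-$s$ analogue of the right-hand side of $\eqref{pri:6.4}$. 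Subtracting averages on $D_{2^{-j}}$ and $D_{2^{-j+1}}\subset D_s$ cancels $c_s$ and splits the remainder into three pieces: (i) the convergence-rate piece, bounded by $C(\varepsilon 2^j)^{1/2}\mathcal{R}_s$; (ii) an oscillatory-corrector piece $\dashint_{D_{2^{-j}}}\pi(\cdot/\varepsilon)\psi_{2\varepsilon}\nabla u_0-\dashint_{D_{2^{-j+1}}}\pi(\cdot/\varepsilon)\psi_{2\varepsilon}\nabla u_0$; and (iii) a purely homogenized piece $\dashint_{D_{2^{-j}}}p_0-\dashint_{D_{2^{-j+1}}}p_0$. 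For (ii), the $C^{1,\tau}$ assumption gives $\pi\in L^\infty$ (Remark~\ref{remark:3.1}); combining this with the layer/co-layer bounds of Lemma~\ref{lemma:3.5} on $\nabla u_0$ off $\Sigma_{4\varepsilon}$ and on $\varepsilon\nabla^2u_0$ inside $\Sigma_{4\varepsilon}$, piece (ii) is again of order $(\varepsilon 2^j)^{1/2}\mathcal{R}_s$. For (iii), the boundary Schauder estimate $\eqref{pri:2.18}$ for $p_0$ on $D_s$ gives $|\dashint_{D_{2^{-j}}}p_0-\dashint_{D_{2^{-j+1}}}p_0|\leq C\,2^{-\rho}\mathcal{R}_s$ with $\rho\in(0,\min\{\eta,1-d/p\})$, uniformly in $j$.

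Summing, the key arithmetic point is
\[
\sum_{j=1}^{k}(\varepsilon 2^j)^{1/2}=\varepsilon^{1/2}\sum_{j=1}^{k}2^{j/2}\leq C(\varepsilon 2^k)^{1/2}\leq C,
\]
which is precisely why a locally $O(\varepsilon^{1/2})$ rate is required for the pressure (as stressed in the introduction), while $\sum_{j\geq 1}2^{-j\rho}$ is a convergent geometric series. Each local data $\mathcal{R}_s$ can be replaced by $\mathcal{R}_1$, namely the right-hand side of $\eqref{pri:6.4}$, using the uniform Lipschitz estimate for the velocity from Theorem~\ref{thm:5.2} to dominate $s^{-1}(\dashint_{D_{2s}}|u_\varepsilon|^2)^{1/2}$ together with the trivial scale-monotonicity of the $F,h,g$-terms. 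The leading term $|\dashint_{D_r}p_\varepsilon-\dashint_{D_{2^{-k}}}p_\varepsilon|$ is handled identically with $s=2^{-k+1}$ and $r\in[s/4,s/2)$. The main obstacle I foresee is a clean execution of piece (ii): showing that the $L^\infty$-bounded oscillatory factor $\pi(\cdot/\varepsilon)\psi_{2\varepsilon}\nabla u_0$, after taking the dyadic difference of averages, genuinely produces the factor $(\varepsilon 2^j)^{1/2}$ rather than a naive $O(1)$ bound. This is exactly where the cutoff $\psi_{2\varepsilon}$ and Lemma~\ref{lemma:3.5} do the essential work: $\psi_{2\varepsilon}$ restricts the contribution of $\nabla u_0$ to the boundary layer $D_s\setminus\Sigma_{4\varepsilon}$, where Lemma~\ref{lemma:3.5} quantifies its smallness, while the complementary $\varepsilon\nabla^2u_0$ bound on $\Sigma_{4\varepsilon}\cap D_s$ absorbs the smoothing/commutator error. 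Once (ii) is controlled, the remaining summation and absorption into $\mathcal{R}_1$ is routine.
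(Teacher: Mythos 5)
There is a genuine gap, and it is located precisely in piece (iii). You redefine a \emph{new} homogenized solution $(u_0^{(j)},p_0^{(j)})$ on each dyadic cylinder $D_{2^{-j+1}}$ with boundary data $u_\varepsilon$ on $\partial D_{2^{-j+1}}$. With this scale-dependent pressure $p_0^{(j)}$, the purely homogenized dyadic difference does \emph{not} decay in $j$. Indeed, rescale $\tilde{p}(y)=p_0^{(j)}(sy)$ with $s=2^{-j+1}$: then
\[
\dashint_{D_{2^{-j}}}p_0^{(j)}-\dashint_{D_{2^{-j+1}}}p_0^{(j)}
=\dashint_{D_{1/2}}\tilde{p}-\dashint_{D_{1}}\tilde{p},
\]
and the boundary Schauder estimate for $\tilde{p}$ on a \emph{unit-scale} domain gives a bound of order $\mathcal{R}_s$, with no factor $s^\rho$. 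By Theorem~\ref{thm:5.2} (the uniform Lipschitz estimate for $u_\varepsilon$), $\mathcal{R}_s\lesssim\mathcal{R}_1$ uniformly, so each term in the sum is $O(\mathcal{R}_1)$ and $\sum_{j=1}^k$ grows like $k$, diverging as $r\to0$. Your own text is internally inconsistent on this point: you claim the bound $C\,2^{-\rho}\mathcal{R}_s$ ``uniformly in $j$'' and then invoke $\sum_{j\geq1}2^{-j\rho}$, but a $j$-uniform bound does not sum, and a $(2^{-j})^\rho$ bound is exactly what is \emph{lost} when $p_0^{(j)}$ changes with the scale. The factor $s^\rho$ in Lemma~\ref{lemma:6.3} is relative to a fixed macroscopic domain: it comes from the H\"older continuity of one fixed $p_0$ near the origin, i.e.\ $|p_0(x)-p_0(0)|\lesssim|x|^\rho[p_0]_{C^{0,\rho}(D_1)}$, and disappears if you renormalize the domain to scale $s$ at every step.

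The paper's proof avoids this by fixing a \emph{single} $(u_0,p_0)$ on $D_2$ for the entire telescoping. With a fixed $p_0\in C^{0,\rho}(D_1)$ one genuinely gets $|\dashint_{D_{2^{-j}}}p_0-\dashint_{D_{2^{-j+1}}}p_0|\leq C(2^{-j})^\rho\,\mathcal{R}_1$ for piece $I_2$ (Lemma~\ref{lemma:6.3}), making $\sum_j(2^{-j})^\rho$ a convergent geometric series. The other two pieces behave as you describe: $I_1$ uses the scaling-invariant pressure rate $(\varepsilon 2^j)^\lambda$ from Lemma~\ref{lemma:6.2}, and $I_3$ uses $\pi\in L^\infty$, the antiderivative $V$ from Remark~\ref{remark:3.1}, and the layer/co-layer bounds of Lemma~\ref{lemma:3.5} to produce the summable factor $(\varepsilon 2^j)^{1/2}$; both sums close because $\varepsilon 2^k<1$. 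Also, a minor correction: the convergence rate for the pressure error $Z_\varepsilon$ is $(\varepsilon/r)^\lambda$ with $\lambda=2\sigma-3/2\in(0,1/2)$, not $(\varepsilon/r)^{1/2}$; this is harmless for the summation but should be stated correctly. Finally, note the paper concludes by replacing $\|u_0\|_{L^2(D_2)}$ with $\|u_\varepsilon\|_{L^2(D_2)}$ via the strong $L^2$ convergence $u_\varepsilon\to u_0$, a step your proposal omits but which is needed to land exactly on the right-hand side of $\eqref{pri:6.4}$. To repair your argument, adopt the paper's fixed $(u_0,p_0)$ on $D_2$; the rest of your outline then essentially matches the paper.
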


\begin{lemma}\label{lemma:6.2}
Let $(u_\varepsilon,p_\varepsilon)$ be the solution of
$\mathcal{L}_\varepsilon(u_\varepsilon)+\nabla p_\varepsilon = F$, $\emph{div}(u_\varepsilon) = h$
in $D_{5}$, and $u_\varepsilon = g$ on $\Delta_{5}$. Then there exists
$(u_0,p_0)\in H^1(D_{2};\mathbb{R}^d)\times L^2(D_{2})$ such that
$\mathcal{L}_0(u_0)+\nabla p_0 = F$ and $\emph{div}(u_0) = h$ in $D_{2}$ with $u_0=g$ on $\Delta_{2}$, and
there holds
\begin{equation}\label{pri:6.6}
\begin{aligned}
\Big(\dashint_{D_r}\big|& p_\varepsilon - p_0
-\pi(\cdot/\varepsilon)\psi_{4\varepsilon}\nabla u_0-c\big|^2 dx\Big)^{\frac{1}{2}}
\leq C\left(\frac{\varepsilon}{r}\right)^{\lambda}
\Bigg\{\frac{1}{r}\Big(\dashint_{D_{4r}}|u_\varepsilon|^2 dx\Big)^{\frac{1}{2}} \\
 + \|h\|_{L^{\infty}(D_{4r})}
&+ r\Big(\dashint_{D_{4r}}|\nabla h|^p dx\Big)^{\frac{1}{p}}
 + r\Big(\dashint_{D_{4r}} |F|^pdx\Big)^{\frac{1}{p}}
 + \|\nabla g\|_{L^{\infty}(\Delta_{4r})}
 + r^{\eta}\big[\nabla g\big]_{C^{0,\eta}(\Delta_{4r})}\Bigg\},
\end{aligned}
\end{equation}
for any $\varepsilon\leq r<(1/4)$, where $\lambda = 2\sigma-3/2$, and
\begin{equation*}
c = \dashint_{D_r} \Big[ p_\varepsilon - p_0
- \pi(\cdot/\varepsilon)\psi_{4\varepsilon}\nabla u_0 \Big] dx,
\end{equation*}
and $C$ depends only on $\mu,d,\eta$ and $M$.
\end{lemma}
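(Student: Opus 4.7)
The plan is to parallel the argument used for the velocity in Lemma~\ref{lemma:5.4}, with the pressure convergence rate $\eqref{pri:3.14}$ (derived from Theorem~\ref{thm:3.3}) playing the role that $\eqref{pri:3.13}$ played there. By the standard dilation $x\mapsto rx$, which preserves the structure of $(\text{D}_\varepsilon)$ and the weighted data norms, I would reduce to the case $r=1$; then the error is measured on $D_1$, the construction of $u_0$ takes place on $D_2$, and the data norms live on $D_4$. Undoing the dilation at the end restores the factor $(\varepsilon/r)^\lambda$ and the $D_{4r}$-norms in $\eqref{pri:6.6}$.

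With $r=1$ fixed, let $(u_0,p_0)\in H^1(D_2;\mathbb{R}^d)\times L^2(D_2)$ solve $\mathcal{L}_0(u_0)+\nabla p_0 = F$ and $\text{div}(u_0)=h$ in $D_2$ with $u_0 = u_\varepsilon$ on $\partial D_2$; since $u_\varepsilon=g$ on $\Delta_2$, the required trace condition on $\Delta_2$ is automatic, and the compatibility condition is inherited from $u_\varepsilon$. Choose $c$ to be the mean of $p_\varepsilon - p_0 - \pi(\cdot/\varepsilon)\psi_{4\varepsilon}\nabla u_0$ over $D_1$, so that the left-hand side of $\eqref{pri:6.6}$ becomes exactly the quotient norm of this expression. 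Applying estimate $\eqref{pri:3.14}$ on the domain $D_2$ (after observing that the cut-off $\psi_{2\varepsilon}$ there may be swapped for $\psi_{4\varepsilon}$, because the difference $\pi(\cdot/\varepsilon)(\psi_{4\varepsilon}-\psi_{2\varepsilon})\nabla u_0$ is supported in a strip of thickness $\sim\varepsilon$ and is controlled in $L^2$ by the co-area argument of Lemma~\ref{lemma:3.4} together with the boundedness of $\pi$), one obtains
\begin{equation*}
\big\|p_\varepsilon - p_0 - \pi(\cdot/\varepsilon)\psi_{4\varepsilon}\nabla u_0\big\|_{L^2(D_1)/\mathbb{R}}
\leq C\varepsilon^{\sigma-\frac{1}{2}}\Big\{\text{data on } D_2 + \|\nabla u_\varepsilon\|_{L^\infty(\partial D_2\setminus \Delta_2)}\Big\},
\end{equation*}
the last term arising from the inhomogeneous Dirichlet trace on the interior part of $\partial D_2$.

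The third step is verbatim the non-uniform boundary-Lipschitz bound already carried out in Lemma~\ref{lemma:5.4}: cover $\partial D_2\setminus\Delta_2$ by half-disks $\tilde{D}_{4\varepsilon}$ near $\Delta_2$ and by interior balls $\tilde{B}_\varepsilon$ away from $\Delta_2$, apply the variable-coefficient local Lipschitz estimate $\eqref{pri:2.17}$ (or its interior analogue from \cite{SGZWS}) at scale $\varepsilon$, and then absorb the rescaled $L^2$ norms by the uniform boundary H\"older bound $\eqref{pri:4.6}$. This yields the non-uniform control
\begin{equation*}
\|\nabla u_\varepsilon\|_{L^\infty(\partial D_2\setminus\Delta_2)}
\leq C\varepsilon^{\sigma-1}\Big\{\|u_\varepsilon\|_{L^2(D_4)} + \|F\|_{L^p(D_4)} + \|h\|_{W^{1,p}(D_4)} + \|\nabla g\|_{C^{0,\eta}(\Delta_4)}\Big\},
\end{equation*}
and combining with the preceding display produces the desired rate $\varepsilon^{\sigma-1/2}\cdot\varepsilon^{\sigma-1}=\varepsilon^{\lambda}$ with $\lambda=2\sigma-\tfrac{3}{2}$. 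Rescaling back from $r=1$ then recovers $\eqref{pri:6.6}$.

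The main obstacle is the same one that already appeared for the velocity: on the rough piece $\partial D_2\setminus\Delta_2$ the local Lipschitz estimate for $u_\varepsilon$ has to be applied at a scale where only smoothness of the coefficients is exploitable, so the best bound available blows up like $\varepsilon^{\sigma-1}$, and for this loss to be absorbed by the $\varepsilon^{\sigma-1/2}$-convergence of the pressure one must choose $\sigma$ strictly greater than $3/4$. A secondary difficulty, specific to the pressure, is that the corrector profile $\pi(\cdot/\varepsilon)\psi_{4\varepsilon}\nabla u_0$ must be kept on the left-hand side, since $p_\varepsilon$ only converges weakly to $p_0$ in $L^2$; this forces one to work with $\pi\in L^\infty$, which is precisely where the smoothness hypothesis $\eqref{a:3}$ on $A$ enters.
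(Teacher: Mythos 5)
Your proposal is correct and follows essentially the same path as the paper's own proof of Lemma~\ref{lemma:6.2}: rescale to $r=1$, take $(u_0,p_0)$ exactly as constructed in Lemma~\ref{lemma:5.4}, apply the pressure rate estimate $\eqref{pri:3.14}$ on $D_2$, absorb the term $\|\nabla u_\varepsilon\|_{L^\infty(\partial D_2\setminus\Delta_2)}$ via the same non-uniform bounds $\eqref{f:5.12}$--$\eqref{f:5.14}$ already established in Lemma~\ref{lemma:5.4}, and rescale back. You also correctly flag (and justify, via the layer estimate and $\pi\in L^\infty$) the harmless discrepancy between the cut-off $\psi_{2\varepsilon}$ appearing in $\eqref{pri:3.14}$ and the $\psi_{4\varepsilon}$ in the statement, a point the paper glides over silently.
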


\begin{proof}
This lemma as the counterpart of Lemma $\ref{lemma:5.4}$ obeys a similar proof.
By rescaling argument, we may prove it for $r=1$. Let
$(u_0,p_0)$ be the same one given in Lemma $\ref{lemma:5.4}$. Hence, it follows from the estimate $\eqref{pri:3.14}$ that
\begin{equation*}
\begin{aligned}
\|p_\varepsilon-p_0
-\pi(\cdot/\varepsilon)\psi_{2\varepsilon}
\nabla u_0&\|_{L^2(D_1)/\mathbb{R}}
\leq  \|p_\varepsilon-p_0 -\pi(\cdot/\varepsilon)\psi_{2\varepsilon}
\nabla u_0\|_{L^2(D_2)/\mathbb{R}}\\
&\leq C\varepsilon^{\sigma-\frac{1}{2}}\bigg\{
 \|F\|_{L^{p}(D_2)} + \|h\|_{W^{1,p}(D_2)}
+\|\nabla g\|_{L^{\infty}(\Delta_2)}
+\|\nabla u_\varepsilon\|_{L^\infty(\partial D_2\setminus\Delta_2)}\bigg\}.
\end{aligned}
\end{equation*}
This together with the estimates $\eqref{f:5.12}$, $\eqref{f:5.13}$ and $\eqref{f:5.14}$ gives
\begin{equation}\label{f:6.8}
\begin{aligned}
\|p_\varepsilon-p_0
-\pi(\cdot/\varepsilon)\psi_{2\varepsilon}
\nabla u_0\|_{L^2(D_1)/\mathbb{R}}&\leq C\varepsilon^{\lambda}\bigg\{
\|u_\varepsilon\|_{L^2(D_4)}
+ \|F\|_{L^{p}(D_4)} + \|h\|_{W^{1,p}(D_4)}
+\|\nabla g\|_{C^{0,\eta}(\Delta_4)}\bigg\},
\end{aligned}
\end{equation}
where $\lambda = 2\sigma-3/2$. The desired estimate $\eqref{pri:6.6}$ is derived from the rescaling argument for any
$r\in[\varepsilon,1/4]$.
\end{proof}

\begin{lemma}\label{lemma:6.3}
Assume the same conditions as in Theorem $\ref{thm:1.1}$.
Let $(u_0,p_0)\in H^1(D_4;\mathbb{R}^d)\times L^2(D_4)$ be the weak solution of
$\mathcal{L}_0(u_0) + \nabla p_0= F$ and $\emph{div}(u_0) = h$ in $D_4$ with
$u_0 = g$ on $\Delta_4$. Then for any $0<\rho<\min\{\eta,1-d/p\}$, there holds
\begin{equation}\label{pri:6.5}
 \Big(\dashint_{D_r} \big|p_0 - \dashint_{D_s} p_0\big|^2 dx\Big)^{\frac{1}{2}}
 \leq Cs^\rho\bigg\{\|u_0\|_{L^2(D_2)}
 +\|F\|_{L^{p}(D_2)}
 + \|h\|_{W^{1,p}(D_2)}
 +\|\nabla g\|_{C^{0,\eta}(\Delta_2)}\bigg\}
\end{equation}
for any $0<r\leq s\leq 1$, where $C$ depends only on $\mu,d,p,\eta,M$ and $\rho$.
\end{lemma}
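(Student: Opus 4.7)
The plan is to reduce $\eqref{pri:6.5}$ to the boundary Schauder estimate $\eqref{pri:2.18}$ for the constant-coefficient operator $\mathcal{L}_0$, which yields H\"older continuity of $p_0$ on $D_1$ up to the flat boundary $\Delta_1$. Once a pointwise H\"older bound for $p_0$ on $D_1$ is in hand, the left-hand side of $\eqref{pri:6.5}$ is controlled by the oscillation of $p_0$ over $D_s$, and that oscillation decays like $s^\rho$ automatically.

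First, I would fix any $\rho \in (0,\min\{\eta, 1-d/p\})$ and apply $\eqref{pri:2.18}$ to the homogenized system $\mathcal{L}_0(u_0)+\nabla p_0 = F$, $\mathrm{div}(u_0)=h$ in $D_4$, with $u_0=g$ on $\Delta_4$, taking $r=1$ so that $D_{2r}=D_2$. This produces
\[
[p_0]_{C^{0,\rho}(D_1)} \leq C\,Y,
\]
where $Y$ denotes the bracket on the right-hand side of $\eqref{pri:6.5}$. The term $\|h\|_{L^\infty(D_2)}$ appearing in $\eqref{pri:2.18}$ is absorbed into $\|h\|_{W^{1,p}(D_2)}$ via the Sobolev embedding $W^{1,p}\hookrightarrow L^\infty$ (valid since $p>d$), and the two boundary terms $\|\nabla g\|_{L^\infty(\Delta_2)}$ and $[\nabla g]_{C^{0,\eta}(\Delta_2)}$ combine into $\|\nabla g\|_{C^{0,\eta}(\Delta_2)}$.

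Next, fix $s\in(0,1]$. Since $\psi$ is Lipschitz with $\|\nabla\psi\|_{L^\infty}\leq M$, the diameter of $D_s$ is bounded by $C_M\,s$. Using $D_s\subset D_1$ together with the H\"older bound above, for every $x\in D_s$,
\[
\Big|p_0(x)-\dashint_{D_s}p_0\Big|
\leq \dashint_{D_s}|p_0(x)-p_0(y)|\,dy
\leq [p_0]_{C^{0,\rho}(D_s)}(C_M s)^\rho
\leq C s^\rho Y.
\]
Finally, since $r\leq s$ implies $D_r\subset D_s$, taking $L^2$ averages over $D_r$ of this pointwise bound yields $\eqref{pri:6.5}$.

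I do not expect any serious obstacle: the lemma is essentially a repackaging of the boundary Schauder estimate for $\mathcal{L}_0$, and all the nontrivial regularity is already encoded in $\eqref{pri:2.18}$. The only points to double-check are that the Sobolev step and the geometric bound on $\mathrm{diam}(D_s)$ in terms of $M$ produce the constants with the dependencies claimed in the statement.
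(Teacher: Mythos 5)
Your proposal is correct and follows essentially the same route as the paper: apply the boundary Schauder estimate $\eqref{pri:2.18}$ (with $r=1$) to bound $[p_0]_{C^{0,\rho}(D_1)}$, absorb the lower-order $h$ term via Sobolev embedding, and then convert the H\"older seminorm into the oscillation bound $Cs^\rho$ over $D_s\supset D_r$. The only cosmetic difference is that the paper inserts $p_0(0)$ as a reference value and gets $C(r^{2\rho}+s^{2\rho})[p_0]^2_{C^{0,\rho}(D_1)}$ before using $r\le s$, whereas you compare directly to the average over $D_s$; these are trivially equivalent.
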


\begin{proof}
Since $\mathcal{L}_0$ is an operator with constant coefficients, it is well known that
$(u_0,p_0)\in C^{1,\rho}(D_1;\mathbb{R}^d)\times C^{0,\rho}(D_1)$, and
it follows from the local estimate $\eqref{pri:2.18}$ that
\begin{equation}
\begin{aligned}
\dashint_{D_r} \big|p_0 - \dashint_{D_s} p_0\big|^2 dx
& = \dashint_{D_r} \Big|p_0 -p_0(0) - \dashint_{D_s}\big[p_0-p_0(0)\big]dy \Big|^2 dx \\
& \leq C\Big\{r^{2\rho}+s^{2\rho}\Big\}\big[p_0\big]_{C^{0,\rho}(D_1)}^2\\
& \leq Cs^{2\rho}\bigg\{\|u_0\|_{L^2(D_2)}
 +\|F\|_{L^{p}(D_2)}
 + \|h\|_{W^{1,p}(D_2)}
 +\|\nabla g\|_{C^{0,\eta}(\Delta_2)}\bigg\}.
\end{aligned}
\end{equation}
This implies the desired estimate $\eqref{pri:6.5}$. We have completed
the proof.
\end{proof}

\begin{flushleft}
\textbf{Proof of Theorem $\ref{thm:6.2}$.} For any $\varepsilon\leq r<(1/4)$,
there exists an integer $k>0$ such that $2^{-k-1}\leq r<2^{-k}$. Then we have
\end{flushleft}
\begin{equation*}
 \Big|\dashint_{D_r} p_\varepsilon - \dashint_{D_1} p_\varepsilon dx\Big|
 \leq 2\Big|\dashint_{D_{2^{-k}}} p_\varepsilon - \dashint_{D_1} p_\varepsilon dx \Big|
 \leq 4\sum_{j=1}^k \Big|\dashint_{D_{2^{-j}}} p_\varepsilon - \dashint_{D_{2^{-j+1}}} p_\varepsilon dx\Big|
\end{equation*}
It now remains to estimate each terms above. For simplicity, we denote $Z_\varepsilon$ and its average by
\begin{equation*}
Z_\varepsilon(x) = p_\varepsilon(x) - p_0(x)
 - \pi_i^\gamma(x/\varepsilon)\psi_{4\varepsilon}(x)\nabla_i u_0^\gamma(x),
\qquad
\overline{(Z_\varepsilon)}_{D_{2^{-j}}} = \dashint_{D_{2^{-j}}}Z_\varepsilon(x) dx.
\end{equation*}
Thus for any $c\in \mathbb{R}$, it follows that
\begin{equation}\label{f:6.9}
\begin{aligned}
\Big|\dashint_{D_{2^{-j}}} p_\varepsilon
- \dashint_{D_{2^{-j+1}}} p_\varepsilon \, dx\Big|
&\leq \Bigg|\dashint_{D_{2^{-j}}} \bigg\{\Big[Z_\varepsilon - c
 -\dashint_{D_{2^{-j+1}}} (Z_\varepsilon - c)\, dy\Big]+ \Big[p_0(x) - \dashint_{D_{2^{-j+1}}} p_0 \,dy\Big] \\
& \qquad+ \Big[\pi(\cdot/\varepsilon)\psi_{4\varepsilon}\nabla u_0
-\dashint_{D_{2^{-j+1}}} \pi(y/\varepsilon)\psi_{4\varepsilon}\nabla u_0\,dy\Big]\bigg\}  dx\Bigg|.
\end{aligned}
\end{equation}
Now if we set $c = \overline{(Z_\varepsilon)}_{D_{2^{-j+1}}}$, then the right-hand side of $\eqref{f:6.9}$ is controlled by
\begin{equation*}
\begin{aligned}
\Big(\dashint_{D_{2^{-j}}} \big|Z_\varepsilon
 &- \overline{(Z_\varepsilon)}_{D_{2^{-j+1}}}\big|^2 dx\Big)^{\frac{1}{2}}
 + \Big(\dashint_{D_{2^{-j}}}\big|p_0 - \dashint_{D_{2^{-j+1}}}
 p_0 \big|^2 dx\Big)^{\frac{1}{2}} \\
 & + \bigg|\dashint_{D_{2^{-j}}} \Big[\pi(\cdot/\varepsilon)\psi_{4\varepsilon}\nabla u_0
-\dashint_{D_{2^{-j+1}}} \pi(y/\varepsilon)\psi_{4\varepsilon}\nabla u_0 dy\Big] dx\bigg|
 =: I_1 + I_2 + I_3.
\end{aligned}
\end{equation*}
We first handle $I_1$, and it follows that
\begin{equation}\label{f:6.13}
\begin{aligned}
I_1 &\leq C\Big(\dashint_{D_{2^{-j+1}}} \big|Z_\varepsilon
- \overline{(Z_\varepsilon)}_{D_{2^{-j+1}}}\big|^2 dx\Big)^{\frac{1}{2}} \\
&\leq C\left(\frac{\varepsilon}{2^{-j}}\right)^{\lambda}
\Bigg\{\frac{1}{2^{-j}}\Big(\dashint_{D_{2^{-j}}}|u_\varepsilon|^2dx\Big)^{\frac{1}{2}}
+2^{-j}\Big(\dashint_{D_{2^{-j}}}|F|^pdx\Big)^{\frac{1}{p}}\\
&\qquad\qquad\qquad\qquad
+2^{-j}\Big(\dashint_{D_{2^{-j}}}|\nabla h|^pdx\Big)^{\frac{1}{p}}
+ \|h\|_{L^\infty(D_2)} + \|\nabla g\|_{C^{0,\eta}(\Delta_2)} \Bigg\} \\
&\leq C(\varepsilon 2^j)^{\lambda}\bigg\{\Big(\dashint_{D_{2}}|u_\varepsilon|^2dx\Big)^{\frac{1}{2}}
+\|F\|_{L^p(D_2)} + \|\nabla h\|_{L^{p}(D_2)}
+ \|h\|_{L^\infty(D_2)} + \|\nabla g\|_{C^{0,\eta}(\Delta_2)}\bigg\},
\end{aligned}
\end{equation}
where we use the Lipschitz estimate $\eqref{f:5.16}$ in the last step, as well as the fact $p>d$.
Then we proceed to study $I_2$, and it follows from the estimate $\eqref{pri:6.5}$ that
\begin{equation}\label{f:6.14}
I_2 \leq C(2^{-j})^{\rho}\bigg\{\|u_0\|_{L^2(D_2)}
 +\|F\|_{L^{p}(D_2)}
 + \|h\|_{W^{1,p}(D_2)}
 +\|\nabla g\|_{C^{0,\eta}(\Delta_2)}\bigg\}.
\end{equation}
We now investigate $I_3$. In the case of $\varepsilon \leq 2^{-j+1} <3\varepsilon$,
there is nothing to do since the term $S_\varepsilon(\psi_{4\varepsilon}\nabla u_0)$ is supported outside
$D_{2^{-j+1}}$. Hence we only deal with $I_3$ in the case of $2^{-j}\geq 3\varepsilon$.
\begin{equation*}
\begin{aligned}
I_3 &\leq \bigg|
\dashint_{D_{2^{-j}}} \big[\pi(\cdot/\varepsilon)-\widehat{\pi}\big]
\psi_{4\varepsilon}\nabla u_0 dx\bigg|
+ \bigg|\dashint_{D_{2^{-j+1}}} \big[\pi(\cdot/\varepsilon)-\widehat{\pi}]
\psi_{4\varepsilon}\nabla u_0 dx \bigg|\\
& + \big|\widehat{\pi}\big|~\bigg|
\dashint_{D_{2^{-j}}}\Big(\psi_{4\varepsilon}\nabla u_0
-\dashint_{D_{2^{-j+1}}}\psi_{4\varepsilon}\nabla u_0dy\Big) dx\bigg|
=:I_{31} + I_{32} + I_{33},
\end{aligned}
\end{equation*}
where $\widehat{\pi} = \dashint_Y \pi(y) dy$.
Note that $I_{31}$ and $I_{32}$ obey the same computations. Taking $I_{31}$ for example,
there exists $V\in H^1_{per}(Y;\mathbb{R}^d)$ such that
$\text{div}_y(V)= \pi(y) -\widehat{\pi}$ in $\mathbb{R}^d$. Thus we have
\begin{equation}\label{f:6.11}
\begin{aligned}
I_{31}
& = \bigg|\dashint_{D_{2^{-j}}} \big[\pi(\cdot/\varepsilon)-\widehat{\pi}\big]
\psi_{4\varepsilon}\nabla u_0 dx\bigg|
= \varepsilon \bigg|\dashint_{D_{2^{-j}}} \text{div}_x \big[V(x/\varepsilon)\big]
\psi_{4\varepsilon}\nabla u_0 dx\bigg| \\
& \leq \varepsilon \dashint_{D_{2^{-j}}} \Big|V(x/\varepsilon)\cdot \nabla
(\psi_{4\varepsilon}\nabla u_0) \Big|dx
+ \frac{C\varepsilon}{2^{-j}}\bigg(\dashint_{\partial(D_{2^{-j}})}
\Big|V(x/\varepsilon)
\psi_{4\varepsilon}\nabla u_0 \Big|^2dx\bigg)^{\frac{1}{2}} \\
&\leq \frac{C}{|D_{2^{-j}}|^{\frac{1}{2}}}\Big\{\big\|\nabla u_0\big\|_{L^2(D_{2^{-j}}\setminus\Sigma_{4\varepsilon})}
+ \varepsilon\big\|\nabla^2 u_0\big\|_{L^2(D_{2^{-j}}\cap\Sigma_{4\varepsilon})}\Big\}
+C\varepsilon 2^{j}\|\nabla u_0\|_{L^\infty(D_{1/2})}\\
&\leq C\Big\{(\varepsilon 2^j)^{\frac{1}{2}}
+ (\varepsilon 2^j)\Big\}
\Bigg\{
\Big(\dashint_{D_{2}}|u_0|^2 dx\Big)^{\frac{1}{2}}
+ \|F\|_{L^p(D_2)}
+ \|h\|_{W^{1,p}(D_2)}
+ \|\nabla g\|_{C^{0,\eta}(\Delta_{2})}\Bigg\},
\end{aligned}
\end{equation}
where we use Remark $\ref{remark:3.1}$ in the second inequality, and the estimates $\eqref{pri:3.15}$, $\eqref{pri:3.16}$
and $\eqref{pri:2.17}$ in the last one.
We now turn to estimate $I_{33}$. It follows that
\begin{equation}\label{f:6.12}
\begin{aligned}
I_{33} &= \big|\widehat{\pi}\big|~\bigg|
\dashint_{D_{2^{-j}}}\Big(\psi_{4\varepsilon}\nabla u_0
-\dashint_{D_{2^{-j+1}}}\psi_{4\varepsilon}\nabla u_0dy\Big) dx\bigg|
\leq C\dashint_{D_{2^{-j}}}\big|\psi_{4\varepsilon}\nabla u_0
- \nabla u_0(0)\big| dx\\
&\leq  C\dashint_{D_{2^{-j}}}\Big|\psi_{4\varepsilon}\big(\nabla u_0
-\nabla u_0(0)\big) + (\psi_{4\varepsilon} - 1)\nabla u_0(0)\Big| dx \\
&\leq C(2^{-j})^\rho\big[\nabla u_0\big]_{C^{0,\rho}(D_1)}
+ C(\varepsilon 2^j)\|\nabla u_0\|_{L^\infty(D_1)}\\
&\leq  C\Big\{(2^{-j})^{\rho}+ (\varepsilon 2^{j})\Big\}\bigg\{\|u_0\|_{L^2(D_2)}
 +\|F\|_{L^{p}(D_2)}
 + \|h\|_{W^{1,p}(D_2)}
 +\|\nabla g\|_{C^{0,\eta}(\Delta_2)}\bigg\},
\end{aligned}
\end{equation}
where we employ the estimate $\eqref{pri:2.17}$ and $\eqref{pri:2.18}$ in the last step.
Moreover, it is clear to see that the estimates $\eqref{f:6.11}$ and $\eqref{f:6.12}$ give
\begin{equation}\label{f:6.15}
\begin{aligned}
I_{3}
&\leq C\Big\{(\varepsilon 2^j)+ (\varepsilon 2^j)^{\frac{1}{2}}
+ (2^{-j})^\rho\Big\} \bigg\{\Big(\dashint_{D_2}|u_0|^2 dx\Big)^{\frac{1}{2}}
+\|F\|_{L^p(D_2)}
+\|h\|_{W^{1,p}(D_2)}
+\|\nabla g\|_{C^{0,\eta}(\Delta_2)}\bigg\}.
\end{aligned}
\end{equation}
Combining the estimates $\eqref{f:6.13}$, $\eqref{f:6.14}$ and $\eqref{f:6.15}$, we obtain that
\begin{equation*}
\begin{aligned}
\Big|\dashint_{D_r} &p_\varepsilon - \dashint_{D_1} p_\varepsilon dx\Big|
\leq \sum_{j=1}^k (\varepsilon 2^j)^{\frac{1}{2}}
\bigg\{\Big(\dashint_{D_2}|u_\varepsilon|^2 dx\Big)^{\frac{1}{2}}
+\|F\|_{L^p(D_2)} + \|h\|_{W^{1,p}(D_2)}
+ \|\nabla g\|_{C^{0,\eta}(\Delta_2)} \bigg\}\\
& +\sum_{j=1}^k C\Big\{(\varepsilon 2^j)+ (\varepsilon 2^j)^{\frac{1}{2}}
+ (2^{-j})^\rho\Big\} \bigg\{\Big(\dashint_{D_2}|u_0|^2 dx\Big)^{\frac{1}{2}}
+\|F\|_{L^p(D_2)}
+\|h\|_{W^{1,p}(D_2)}
+\|\nabla g\|_{C^{0,\eta}(\Delta_2)}\bigg\}.
\end{aligned}
\end{equation*}
Noting that $0<\varepsilon<2^{-k}$, there exists a constant $C$ independent of $k$ such that
\begin{equation}\label{f:6.10}
\begin{aligned}
\Big|\dashint_{D_r} p_\varepsilon - \dashint_{D_1} p_\varepsilon dx\Big|
&\leq C\Bigg\{\Big(\dashint_{D_2}|u_\varepsilon|^2 dx\Big)^{\frac{1}{2}}
+ \Big(\dashint_{D_2}|u_0|^2 dx\Big)^{\frac{1}{2}}+\Big(\dashint_{D_2}|F|^p dx\Big)^{\frac{1}{p}}\\
&\qquad+\Big(\dashint_{D_2}|\nabla h|^p dx\Big)^{\frac{1}{p}}
+\|h\|_{L^{\infty}(D_2)}
+\|\nabla g\|_{C^{0,\eta}(\Delta_2)}\Bigg\}.
\end{aligned}
\end{equation}
In fact, we know that $u_\varepsilon\to u_0$ strongly in $L^2(D_2;\mathbb{R}^d)$ according to
the construction of $(u_0,p_0)$ in Lemmas $\ref{lemma:5.4}$ and $\ref{lemma:6.3}$. Hence, it is not hard
to see that
\begin{equation*}
\Big(\dashint_{D_2}|u_0|^2 dx\Big)^{\frac{1}{2}}
\leq C\Big(\dashint_{D_2}|u_\varepsilon|^2 dx\Big)^{\frac{1}{2}}
\end{equation*}
Put this inequality into $\eqref{f:6.10}$, and we finally derive the desired estimate.
\qed

\begin{flushleft}
\textbf{Proof of Theorem $\ref{thm:1.1}$.}
The desired estimate
$\eqref{pri:1.1}$ directly follows from
Theorems $\ref{thm:5.2}$ and $\ref{thm:6.2}$.
\qed
\end{flushleft}

\begin{flushleft}
\textbf{Proof of Theorem $\ref{thm:1.0}$.}
The estimate
$\eqref{pri:1.0}$ follows from
Theorem $\ref{thm:1.1}$ and \cite[Theorem 1.1]{SGZWS},
and we are done.
\qed
\end{flushleft}

\begin{center}
\textbf{Acknowledgements}
\end{center}

Both of the authors want to express their sincere appreciation to Professor Zhongwei Shen for his constant guidance and encouragement.
The second author was supported by the National Natural Science Foundation of China (Grant NO.11471147).

\end{document}